\newtheorem{theorem}{Theorem}[section]
\theoremstyle{definition}
\newtheorem{definition}[theorem]{Definition}
\newtheorem{proposition}[theorem]{Proposition}
\newtheorem{remark}[theorem]{Remark}
\newtheorem{example}[theorem]{Example}
\theoremstyle{remark}
\numberwithin{equation}{section}
\begin{document}

\title[Homomorphisms and cv-polynomials between double Ore extensions]{A view toward homomorphisms and cv-polynomials between double Ore extensions}

%    Remove any unused author tags.

%    author one information

\author{Mar\'ia Camila Ram\'irez}
\address{Universidad Nacional de Colombia - Sede Bogot\'a}
\curraddr{Campus Universitario}
\email{macramirezcu@unal.edu.co}
\thanks{}

%   author two information
\author{Armando Reyes}
\address{Universidad Nacional de Colombia - Sede Bogot\'a}
\curraddr{Campus Universitario}
\email{mareyesv@unal.edu.co}

\thanks{This work was supported by Faculty of Science, Universidad Nacional de Colombia - Sede Bogot\'a, Colombia [grant number 53880].}

\subjclass[2020]{16S36, 16S38, 16S80, 16W20}

\keywords{Double Ore extension, iterated Ore extension, Nakayama automorphism, cv-polynomial, inner derivation}

\date{}

\dedicatory{Dedicated to the memory of Professor Nikolay A. Vavilov}

\begin{abstract}

Motivated by the theory of homomorphisms and cv-polynomials of Ore extensions formulated by several mathematicians, the rol of double Ore extensions introduced by Zhang and Zhang in the classification of Artin-Schelter regular algebras of dimension four, and that there are no inclusions between the classes of all double Ore extensions of an algebra and of all length two iterated Ore extensions of the same algebra, our aim in this paper is to present a first approach toward a theory of homomorphisms and cv-polynomials between double Ore extensions. We obtain several results on the characterizations of cv-polynomials and their relations with inner derivations of the ring of coefficients of the double algebra, and show that the computation of homomorphisms corresponding to these polynomials is non-trivial. We illustrate our results with different examples including Nakayama automorphisms of trimmed double Ore extensions.

\end{abstract}

\maketitle

%\tableofcontents

\section{Introduction}\label{introduction}

For $R$ a commutative ring, Gilmer \cite{Gilmer1968} determined all the automorphisms of the commutative polynomial ring $R[x]$ which fix $R$ elementwise. Parmenter \cite{Parmenter} extended this result to the case where $R$ is commutative and $\sigma$ is any automorphism of $R$. The case where $\sigma$ is the identity on any ring $R$ was considered by Coleman and Enochs \cite{ColemanEnochs1971} and Brewer and Rutter \cite{BrewerRutter1972}. These  three results are corollaries of Rimmer's paper \cite[Theorem 1]{Rimmer1978}, since for a unital ring (not necessarily commutative) $R$ and $\sigma$ any automorphism of $R$, he determined all the automorphisms of the Ore extension (introduced by Ore \cite{Ore1931, Ore1933}) of automorphism type $R[x;\sigma]$ that fix $R$ elementwise (he called them $R$-linear maps). As an application of his results, he investigated isomorphisms between different skew polynomial rings which preserve an underlying ring isomorphism \cite[Theorem 3]{Rimmer1978}. On the other hand, Ferrero and Kishimoto \cite{FerreroKishimoto1980} and Kikumasa \cite{Kikumasa1990} studied automorphisms of Ore extensions of derivation type $R[x;\delta]$. They discussed conditions on $f(x) \in R[x;\delta]$ for the $R$-linear map $R[x;\delta] \to R[x;\delta]$ defined by $x^k\mapsto f(x)^{k}$ to be a $R$-ring automorphism. Later, Lam and Leroy \cite{LamLeroy1992} studied \textquotedblleft transformations\textquotedblright\ from one Ore extension of mixed type $D[x';\sigma',\delta']$ to another extension $D[x;\sigma,\delta]$ over a division ring $D$. They stated that a $D$-homomorphism $\phi$ from $D[x';\sigma', \delta']$ to $D[x;\sigma, \delta]$ is determined by $\phi(x') := p(x) \in D[x;\sigma, \delta]$. The choice of $p(x)$ must satisfy the condition  $p(x)a = \sigma'(a)p(x) + \delta'(a)$, for every $a \in D$. Since the polynomial $p(x)$ allows us to make a \textquotedblleft change of variables\textquotedblright\ (from $x'$ to $x$), they called $p(x)$ a {\em change-of-variable polynomial} (or {\em cv-polynomial} for short) respect to the quasi-derivation $(\sigma', \delta')$ on $D$. In their paper, they proved different results about these polynomials and their corresponding morphisms. Another important result establishes when two Ore extensions are isomorphic under certain conditions \cite[Theorem 5.6]{LamLeroy1992}. Several mathematicians have investigated the topic of morphisms between noncommutative polynomial extensions (e.g. \cite{ArmendarizKooPark1987, BenkartLopesOndrus2015, ChuangLeeLiuTsai2010, Chuang2013, Chun1993, Leroy1985, LeroyMatczuk1985, Leroy1995, Leroy2012, LopesRazavinia2022, MartinezPenas2018, MartinezKschischang2019, RichardSolotar2006, RosenRosen1992, SuarezVivas2015}, and references therein). In particular, recently the authors \cite{RamirezReyes2024a} have presented a first approach to homomorphisms and cv-polynomials of two-step, three-step, and $n$-step iterated Ore extensions.

On the other hand, in the setting of noncommutative algebras appearing in noncommutative geometry, {\em Artin-Schelter regular algebras} introduced by Artin And Schelter \cite{ArtinSchelter1987} are considered as noncommutative analogues of commutative polynomial rings due to its important role in noncommutative geometry. As one can appreciate in the literature, these algebras have been extensively studied (see the excellent survey on these algebras carried out by Rogalski \cite{Rogalski2023}). With the aim of presenting new examples of Artin-Schelter regular algebras of dimension four, Zhang and Zhang \cite{ZhangZhang2008, ZhangZhang2009} introduced algebra extensions which they called {\em double Ore extensions} and constructed 26 families of these algebras. Many regular of these algebras are new and are not isomorphic to either a normal extension or an Ore extension of an Artin-Schelter regular algebra of global dimension three. Rather than Ore extensions very few properties are known to be preserved under double Ore extension. Several researchers have investigated different relations of double Ore extensions with Poisson, Hopf, Koszul and Calabi-Yau algebras (e.g. \cite{GomezSuarez2020, Li2022, LouOhWang2020, LuOhWangYu2018, LuWangZhuang2015, SuarezAnayaReyes2021, SuarezCaceresReyes2021, SuarezLezamaReyes2017, ZhuVanOystaeyenZhang2017}). From the definition of double Ore extensions it is possible appreciate some similarities to that of a two-step iterated Ore extensions. Nevertheless, there are no inclusions between the classes of all double Ore extensions of an algebra and of all length two iterated Ore extensions of the same algebra. Precisely, Carvalho et al. \cite{Carvalhoetal2011} formulated necessary and sufficient conditions for a double Ore extension to be presented as two-step iterated Ore extensions. 

Taking into account the above and the developments formulated by Lam and Leroy \cite{LamLeroy1992} and Ram\'irez and Reyes \cite{RamirezReyes2024a}, our aim in this paper is to present a first approach toward a theory of homomorphisms and cv-polynomials between double Ore extensions. 

The paper is organized as follows. Section \ref{definitionsandpreliminaries} contains definitions and preliminaries on double Ore extensions. We recall the necessary and sufficient conditions for a double Ore extension to be presented as two-step iterated Ore extensions presented by Carvalho et al. \cite{Carvalhoetal2011}. Since double Ore extensions are defined by using matrix notation, in Section \ref{subsectiondouble} we introduce the notion of {\em dcv-matrix} that encodes pairs of cv-polynomials (Definition \ref{definitionhomooredoble}). We consider different examples that illustrate dcv-matrices (Examples \ref{examplegatito}, \ref{ironmanone}, \ref{ironmantwo}, and Tables \ref{firsttableDO} and \ref{secondtableDO}), and show that the computations are non-trivial. Theorems \ref{theoreminvariandouble} and \ref{isoOreextension} present characterizations of isomorphisms between double Ore extensions by using dcv-matrices. Section \ref{trimmeddouble} contains examples and a characterization of dcv-matrices for trimmed double Ore extensions with their corresponding Nakayama automorphism (Theorem \ref{dcvtrimmed}). Then, in Section \ref{Two-stepvsDouble}, we investigate when a homomorphism of double extensions is a homomorphism of two-step iterated extensions (Theorem \ref{maintheorem}) in an analogous way to the comparison between double Ore extensions and two-step iterated Ore extensions presented by Zhang and Zhang \cite[Proposition 3.6]{ZhangZhang2009}, which is a special case of Carvalho et al. \cite[Theorems 2.2 and 2.4]{Carvalhoetal2011}. Finally, in Section \ref{futurework} we formulate some ideas for a possible future work concerning morphisms between noncommutative polynomial extensions.

Throughout the paper, the symbol $R$ means an associative (not necessarily commutative) ring with identity, and $R^{*}$ denotes the set of units of $R$. For $\Bbbk$ a field and a $\Bbbk$-algebra $B$, the $\Bbbk$-linear space of matrices of size $n \times m$ with entries in $B$ is denoted by $M_{n\times m}(B)$. The term homomorphism will mean unital ring homomorphism.

\section{Definitions and preliminaries}\label{definitionsandpreliminaries}

We start by recalling the definition of a double Ore extension introduced by Zhang and Zhang \cite{ZhangZhang2008}. Since some typos ocurred in their papers \cite[p. 2674]{ZhangZhang2008} and \cite[p. 379]{ZhangZhang2009} concerning the relations that the data of a double extension must satisfy, we follow the corrections and results presented by Carvalho et al. \cite{Carvalhoetal2011}.

\begin{definition}[{\cite[Definition 1.3]{ZhangZhang2008}; \cite[Definition 1.1]{Carvalhoetal2011}}]\label{DoubleOreDefinition}
Let $R$ be a subalgebra of a $\Bbbk$-algebra $B$.
\begin{itemize}
    \item[\rm (a)] $B$ is called a {\it right double Ore extension} of $R$ if the following conditions hold:
    \begin{itemize}
        \item[\rm (i)] $B$ is generated by $R$ and two new indeterminates $y_1$ and $y_2$;
        \item[\rm (ii)] $y_1$ and $y_2$ satisfy the relation
        \begin{equation}\label{Carvalhoetal2011(1.I)}
        y_2y_1 = p_{12}y_1y_2 + p_{11}y_1^2 + \tau_1y_1 + \tau_2y_2 + \tau_0,
        \end{equation}
        for some $p_{12}, p_{11} \in \Bbbk$ and $\tau_1, \tau_2, \tau_0 \in R$;
        \item[\rm (iii)] $B$ is a free left $R$-module with basis $\left\{y_1^{i}y_2^{j} \mid i, j \ge 0\right\}$.
        \item[\rm (iv)] $y_1R + y_2R + R\subseteq Ry_1 + Ry_2 + R$.
 \end{itemize}
    \item[\rm (b)] A right double extension $B$ of $R$ is called a {\em double extension} if
    \begin{enumerate}
        \item [\rm (i)] $p_{12} \neq 0$;
        \item [\rm (ii)] $B$ is a free right $R$-module with basis $\left\{ y_2^{i}y_1^{j}\mid i, j \ge 0\right\}$;
        \item [\rm (iii)] $y_1R + y_2R + R = Ry_1 + Ry_2 + R$.
    \end{enumerate}
\end{itemize}
\end{definition}

Notice that Condition (a)(iv) from Definition \ref{DoubleOreDefinition} is equivalent to the existence of two maps
\[
\sigma = \begin{bmatrix}
    \sigma_{11} & \sigma_{12} \\ \sigma_{21} & \sigma_{22}
\end{bmatrix}: R\to M_{2\times 2}(R)\quad {\rm and}\quad \delta = \begin{bmatrix} \delta_1 \\ \delta_2  \end{bmatrix}: R\to M_{2\times 1}(R),
\]

such that
\begin{equation}\label{Carvalhoetal2011(1.II)}
    \begin{bmatrix}
        y_1 \\ y_2
    \end{bmatrix} r = \sigma(r) \begin{bmatrix}
        y_1 \\ y_2
    \end{bmatrix} + \delta(r),\quad {\rm for\ all}\ r\in R.
\end{equation}

If $B$ is a right double extension of $R$, we write $B = R_P[y_1, y_2;\sigma, \delta, \tau]$, where $P = \{p_{12}, p_{11}\}\subseteq \Bbbk$, $\tau = \{\tau_0, \tau_1, \tau_2\} \subseteq R$, and $\sigma, \delta$ are as above. The set $P$ is called a {\em parameter} and $\tau$ a {\em tail}.

For $R_p[y_1, y_2; \sigma, \delta, \tau]$ a right double extension, all maps $\sigma_{ij}$ and $\delta_i$ are endomorphisms of the $\Bbbk$-vector space $R$. From \cite[Lemma 1.7]{ZhangZhang2008} we know that $\sigma$ must be a homomorphism of algebras, and $\delta$ is a $\sigma$-derivation in the sense that $\delta$ is $\Bbbk$-linear and satisfes $\delta(rr') = \sigma(r)\delta(r') + \delta(r)r'$, for all $r, r'\in R$. It is straightforward to see that if the matrix $\begin{bmatrix} \sigma_{11} & \sigma_{12} \\ \sigma_{21} & \sigma_{22} \end{bmatrix}$ is triangular, then both $\sigma_{11}$ and $\sigma_{22}$ are algebra homomorphisms. 

Recall that a map $d: R\to R$ is a $\sigma$-derivation, where $\sigma$ is an endomorphism of $R$, if and only if the map from $R$ to $M_{2\times 2}(R)$ sending $r$ onto $\begin{bmatrix} \sigma(r) & d(r) \\ 0 & r\end{bmatrix}$ is a homomorphism of algebras. In this way, for any algebra endomorphism $\sigma$ of the commutative polynomial ring $\Bbbk[x]$ and any polynomial $w\in \Bbbk[x]$, there exists a (unique) $\sigma$-derivation $d$ of $\Bbbk[x]$ such that $d(x) = w$ \cite[p. 2840]{Carvalhoetal2011}.

In the case that $\tau \subseteq \Bbbk$, the subalgebra of $R_P[y_1, y_2;\sigma, \delta, \tau]$ generated by $y_1$ and $y_2$ is the double Ore extension $\Bbbk_P[y_1, y_2; \sigma', \delta', \tau']$, where $\sigma' = \sigma |_{\Bbbk}$ is the canonical embedding of $\Bbbk$ in $M_{2\times 2}(\Bbbk)$ and $\delta' = \delta |_{\Bbbk} = 0$ is the zero map. Carvalho et al., \cite{Carvalhoetal2011} proved that the latter is always an iterated Ore extension.

\begin{proposition}[{\cite[Proposition 1.2]{Carvalhoetal2011}}]\label{Carvalhoetal2011Proposition1.2}
Let $B = \Bbbk_P[y_1, y_2;\sigma', \delta', \tau']$. Then $B \cong \Bbbk[x_1] [x_2;\sigma_2, d_2]$ is an iterated Ore extension, where $\sigma_2$ is the algebra endomorphism of the polynomial ring $\Bbbk[x_1]$ defined by $\sigma_2(x_1) = p_{12}x_1 + \tau_2$ and $d_2$ is the $\sigma_2$-derivation of $\Bbbk[x_1]$ given by $d_2(x_1) = p_{11}x_1^2 + \tau x_1 + \tau_0$. Moreover, $B$ is a double extension of $\Bbbk$ if and only if $p_{12} \neq 0$.
\end{proposition}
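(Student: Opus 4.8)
The plan is to write down the multiplication relation (\ref{Carvalhoetal2011(1.I)}) in the case $\tau\subseteq\Bbbk$, so that $R=\Bbbk$, $\delta'=0$, and the relation becomes $y_2y_1 = p_{12}y_1y_2 + p_{11}y_1^2 + \tau_1 y_1 + \tau_2 y_2 + \tau_0$ with all coefficients now scalars. First I would observe that, by Definition \ref{DoubleOreDefinition}(a)(iii), $B$ is a free left $\Bbbk$-module with basis $\{y_1^i y_2^j\}$; this is exactly the condition that must hold for an iterated Ore extension $\Bbbk[x_1][x_2;\sigma_2,d_2]$ written in the normal form with $x_1$ first and $x_2$ second, where $x_1\mapsto y_1$ and $x_2\mapsto y_2$. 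So the candidate isomorphism is the $\Bbbk$-algebra map $\varphi\colon \Bbbk[x_1][x_2;\sigma_2,d_2]\to B$ determined by $x_1\mapsto y_1$, $x_2\mapsto y_2$.

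Next I would verify that $\sigma_2$ and $d_2$ as defined are legitimate: $\sigma_2$ is clearly an algebra endomorphism of $\Bbbk[x_1]$ since it is given by $x_1\mapsto p_{12}x_1+\tau_2$ and extended multiplicatively, and by the remark quoted just before the proposition (from \cite[p.~2840]{Carvalhoetal2011}), once $\sigma_2$ is fixed there is a unique $\sigma_2$-derivation $d_2$ of $\Bbbk[x_1]$ with $d_2(x_1)=p_{11}x_1^2+\tau_1 x_1+\tau_0$. Hence the iterated Ore extension $\Bbbk[x_1][x_2;\sigma_2,d_2]$ genuinely exists, and in it we have the defining relation $x_2 x_1 = \sigma_2(x_1)x_2 + d_2(x_1) = p_{12}x_1 x_2 + \tau_2 x_2 + p_{11}x_1^2 + \tau_1 x_1 + \tau_0$. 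Comparing with (\ref{Carvalhoetal2011(1.I)}), the map $\varphi$ respects the one defining relation between the generators, so it is a well-defined $\Bbbk$-algebra homomorphism onto $B$ (it is surjective because $B$ is generated by $y_1,y_2$ over $\Bbbk$). Injectivity then follows by a basis count: $\varphi$ sends the $\Bbbk$-basis $\{x_1^i x_2^j\}$ of $\Bbbk[x_1][x_2;\sigma_2,d_2]$ to the $\Bbbk$-basis $\{y_1^i y_2^j\}$ of $B$ bijectively, so $\varphi$ is an isomorphism.

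For the last sentence, I would recall that by definition $B=\Bbbk_P[y_1,y_2;\sigma',\delta',\tau']$ is a \emph{double} extension (as opposed to merely a right double extension) precisely when the three conditions in Definition \ref{DoubleOreDefinition}(b) hold, the first of which is $p_{12}\neq 0$. So one direction is immediate. For the converse, assuming $p_{12}\neq 0$ I would check conditions (b)(ii) and (b)(iii): condition (b)(ii) asks that $B$ be a free \emph{right} $\Bbbk$-module with basis $\{y_2^i y_1^j\}$, which over the field $\Bbbk$ amounts to showing these monomials span and are independent — this can be extracted from the iterated Ore extension structure (rewriting using the relation, with $p_{12}\neq 0$ needed to solve for the reversed ordering), and condition (b)(iii) $y_1\Bbbk+y_2\Bbbk+\Bbbk = \Bbbk y_1+\Bbbk y_2+\Bbbk$ is trivial since $\Bbbk$ is central. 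The main (mild) obstacle is the bookkeeping in this converse: showing that $p_{12}\neq 0$ suffices to pass from the left-module basis $\{y_1^i y_2^j\}$ to the right-module basis $\{y_2^i y_1^j\}$, which is where one genuinely uses invertibility of $p_{12}$ to re-expand products; everything else is a direct translation between the two presentations.
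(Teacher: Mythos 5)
The paper itself does not prove this statement: Proposition \ref{Carvalhoetal2011Proposition1.2} is quoted verbatim from Carvalho, Lopes and Matczuk and is used as a black box, so there is no in-paper argument to compare yours against. Judged on its own, your proof is correct and is the natural one. The identification $x_1\mapsto y_1$, $x_2\mapsto y_2$ is legitimate: one first gets the map $\Bbbk[x_1]\to B$ (free on one generator), and then the universal property of the Ore extension $\Bbbk[x_1][x_2;\sigma_2,d_2]$ only requires checking $y_2\,\varphi(p)=\varphi(\sigma_2(p))\,y_2+\varphi(d_2(p))$ on the generator $p=x_1$, which is exactly relation (\ref{Carvalhoetal2011(1.I)}) with $\tau\subseteq\Bbbk$; phrasing this via the universal property is slightly cleaner than invoking a presentation of the Ore extension by generators and relations, but your version is fine. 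Bijectivity by matching the left basis $\{x_1^ix_2^j\}$ with the basis $\{y_1^iy_2^j\}$ guaranteed by Definition \ref{DoubleOreDefinition}(a)(iii) is exactly right.

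The only step you leave genuinely thin is the converse of the ``moreover'': that $p_{12}\neq 0$ forces condition (b)(ii), i.e.\ that $\{y_2^iy_1^j\}$ is also a $\Bbbk$-basis. The argument you gesture at does work, but it has two parts worth stating: (spanning) from $y_1y_2=p_{12}^{-1}\bigl(y_2y_1-p_{11}y_1^2-\tau_1y_1-\tau_2y_2-\tau_0\bigr)$ the rewriting that moves every $y_2$ to the left terminates because the substitution does not increase total degree in $y_1,y_2$ (the new terms $y_2y_1$ and $y_1^2$ have the same degree as $y_1y_2$, the rest lower), so induction on degree applies; (independence) the monomials $\{y_2^iy_1^j: i+j\le n\}$ span the degree-$\le n$ filtration piece, which has dimension $\binom{n+2}{2}$ by the left basis, and since the two sets have equal cardinality the spanning set is a basis. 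Condition (b)(iii) is indeed vacuous over the central base field. With that paragraph filled in, the proof is complete.
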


\begin{remark}[{\cite[Remark 1.3]{Carvalhoetal2011}}]
Let $C = \Bbbk[y_1][y_2;\sigma_2, \delta_2]$ be as in Proposition \ref{Carvalhoetal2011Proposition1.2}. Then, for any $\Bbbk$-algebra $R$, we have $R\otimes_{\Bbbk} C = R_P[y_1, y_2;\sigma, \delta, \tau]$, where
\[
\sigma = \begin{bmatrix} {\rm id}_R & 0 \\ 0 & {\rm id}_R \end{bmatrix},\quad \delta = \begin{bmatrix} 0 \\ 0 \end{bmatrix}, \quad P = \{p_{12}, p_{11}\},\quad {\rm and}\quad \tau = \{\tau_0, \tau_1, \tau_2\}
\]

are as in Proposition \ref{Carvalhoetal2011Proposition1.2}.
\end{remark}

\begin{proposition}[{\cite[Proposition 1.4]{Carvalhoetal2011}}]\label{Carvalhoetal2011Proposition1.4}
Given $P = \{p_{12}, p_{11}\}$ and $\tau = \{\tau_0, \tau_1, \tau_2\}$ subsets of $\Bbbk$, $\sigma: R \to M_{2\times 2}(R)$ an algebra homomorphism, and $\delta: R\to M_{2\times 1}(R)$ a $\sigma$-derivation, let $C = \Bbbk[y_1][y_2;\sigma_2, d_2]$ be as in Proposition \ref{Carvalhoetal2011Proposition1.2}. Then, the following conditions are equivalent:
\begin{enumerate}
    \item [\rm (1)] The right double extension $R_P[y_1, y_2; \sigma, \delta, \tau]$ exists.
    \item [\rm (2)] One can extend the multiplications from $R$ and $C$ to a multiplication in the vector space $R\otimes_{\Bbbk} C$, satisfying $\begin{bmatrix} y_1 \\ y_2 \end{bmatrix} r = \sigma(r) \begin{bmatrix} y_1 \\ y_2 \end{bmatrix} + \delta(r)$, for all $r\in R$.
\end{enumerate}
\end{proposition}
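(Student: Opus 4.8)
The plan is to prove the two implications $(1)\Rightarrow(2)$ and $(2)\Rightarrow(1)$ separately, the common thread being that the underlying $\Bbbk$-vector space of \emph{any} right double extension of $R$ with tail inside $\Bbbk$ can be identified with the tensor product $R\otimes_{\Bbbk}C$; once this identification is in place, both implications reduce to transporting, respectively verifying, the axioms of Definition \ref{DoubleOreDefinition}(a). The hypotheses $P\subseteq\Bbbk$ and, above all, $\tau\subseteq\Bbbk$ are exactly what make the ``$y$-part'' of the extension the fixed algebra $C$ of Proposition \ref{Carvalhoetal2011Proposition1.2}, independent of $R$.

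For $(1)\Rightarrow(2)$: assume $B=R_P[y_1,y_2;\sigma,\delta,\tau]$ exists. First I would check that the subalgebra of $B$ generated by $y_1$ and $y_2$ is isomorphic to $C$. Indeed, since $\Bbbk\subseteq R$, the monomials $\{y_1^iy_2^j\}$ are part of a left $R$-basis of $B$ by Definition \ref{DoubleOreDefinition}(a)(iii), hence $\Bbbk$-linearly independent, and the relation \eqref{Carvalhoetal2011(1.I)} has all its coefficients in $\Bbbk$ (this is where $\tau\subseteq\Bbbk$ enters), so this subalgebra has $\Bbbk$-basis $\{y_1^iy_2^j\}$ and a presentation matching $\Bbbk[y_1][y_2;\sigma_2,d_2]=C$ from Proposition \ref{Carvalhoetal2011Proposition1.2}. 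Now Definition \ref{DoubleOreDefinition}(a)(iii) says precisely that the multiplication map $R\otimes_{\Bbbk}C\to B$, $r\otimes c\mapsto rc$, is an isomorphism of left $R$-modules; transporting the product of $B$ along it yields a multiplication on $R\otimes_{\Bbbk}C$ that restricts to those of $R$ and $C$ and satisfies \eqref{Carvalhoetal2011(1.II)}, the latter because $\sigma$ and $\delta$ are by definition the matrices encoding the commutation rules of $B$ (Definition \ref{DoubleOreDefinition}(a)(iv)).

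For $(2)\Rightarrow(1)$: let $B:=R\otimes_{\Bbbk}C$ with the given associative unital multiplication, set $y_1:=1\otimes y_1$, $y_2:=1\otimes y_2$, and identify $R$ with $R\otimes 1$ and $C$ with $1\otimes C$. I would then verify Definition \ref{DoubleOreDefinition}(a)(i)--(iv). Condition (ii) and relation \eqref{Carvalhoetal2011(1.I)} hold inside the subalgebra $C$, being the defining relation of $C=\Bbbk[y_1][y_2;\sigma_2,d_2]$. Condition (iv) is a verbatim reading of \eqref{Carvalhoetal2011(1.II)}: $y_i r=\sigma_{i1}(r)y_1+\sigma_{i2}(r)y_2+\delta_i(r)\in Ry_1+Ry_2+R$. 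Condition (iii) holds because $C$ is $\Bbbk$-free on $\{y_1^iy_2^j\}$ (Proposition \ref{Carvalhoetal2011Proposition1.2}), so $B=R\otimes_{\Bbbk}C$ is left $R$-free on these monomials. For (i), the requirement that the multiplication ``extend'' those of $R$ and $C$ gives $(r\otimes 1)(1\otimes y_1^iy_2^j)=r\otimes y_1^iy_2^j$, so $R$ together with $y_1,y_2$ generates $B$ as an algebra.

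I expect the only real obstacle to be conceptual bookkeeping rather than computation: one must first fix what ``a multiplication on $R\otimes_{\Bbbk}C$ extending the multiplications of $R$ and $C$'' means --- namely an associative unital product making $R\otimes 1$ and $1\otimes C$ subalgebras with their given products \emph{and} making the canonical map $r\otimes c\mapsto(r\otimes1)(1\otimes c)$ the identity --- after which associativity is a hypothesis, not something to be proved. The one genuinely substantive step is the claim in $(1)\Rightarrow(2)$ that the subalgebra of $B$ generated by $y_1,y_2$ is $C$ itself and not a proper quotient, which rests on the freeness assertion in Definition \ref{DoubleOreDefinition}(a)(iii) together with $\tau\subseteq\Bbbk$. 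Note that none of this invokes the explicit Zhang--Zhang compatibility relations among $P,\tau,\sigma,\delta$; determining \emph{when} the equivalent conditions (1)--(2) actually hold is a separate task, handled by examining the obstruction to constructing the multiplication in (2).
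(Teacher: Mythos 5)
The paper does not prove this proposition: it is recalled verbatim from Carvalho--Lopes--Matczuk \cite[Proposition 1.4]{Carvalhoetal2011} as a preliminary, so there is no in-paper argument to compare against. Your reconstruction is correct and is the standard one: the identification of the subalgebra generated by $y_1,y_2$ with $C$ (using $\tau\subseteq\Bbbk$ and the left $R$-freeness on $\{y_1^iy_2^j\}$) is exactly the assertion the paper itself makes in the paragraph preceding Proposition \ref{Carvalhoetal2011Proposition1.2}, and the rest is the transport of structure along the left $R$-module isomorphism $R\otimes_{\Bbbk}C\to B$ in one direction and a direct verification of Definition \ref{DoubleOreDefinition}(a)(i)--(iv) in the other. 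Your closing caveats are also well placed: the meaning of ``extending the multiplications'' must include $(r\otimes1)(1\otimes c)=r\otimes c$, and the six Zhang--Zhang compatibility relations are indeed irrelevant to this equivalence (they enter only in Proposition \ref{Carvalhoetal2011Proposition1.5}, for which Remark \ref{RemarkCarvalho}(1) notes the present proposition can serve as a tool).
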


\begin{proposition}[{\cite[Lemma 1.10 and Proposition 1.11]{ZhangZhang2008}; \cite[Proposition 1.5]{Carvalhoetal2011}}]\label{Carvalhoetal2011Proposition1.5}
Given a $\Bbbk$-algebra $R$, let $\sigma$ be a homomorphism from $R$ to $M_{2\times 2}(R)$, $\delta$ a $\sigma$-derivation from $R$ to $M_{2\times 1}(R)$, $P = \{p_{12}, p_{11}\}$ a set of elements of $\Bbbk$, and $\tau = \{\tau_0, \tau_1, \tau_2\}$ a set of elements of $R$. Then, the associative $\Bbbk$-algebra $B$ generated by $R, y_1$ and $y_2$ subject to the relations (\ref{Carvalhoetal2011(1.I)}) and (\ref{Carvalhoetal2011(1.II)}), is a right double extension if and only if the maps $\sigma_{ij}$ and $\rho_k$, $i\in \{1, 2\}, j, k \in \{0, 1, 2\}$, satisfy the six relations (\ref{Carvalhoetal2011(1.III)}) - (\ref{Carvalhoetal2011(1.VIII)}) below, where $\sigma_{i0} = \delta_i$ and $\rho_k$ is a \underline{right} multiplication by $\tau_k$: 
\begin{equation}\label{Carvalhoetal2011(1.III)} 
    \sigma_{21} \sigma_{11} + p_{11}\sigma_{22}\sigma_{11} = p_{11}\sigma_{11}^2 + p_{11}^2 \sigma_{12}\sigma_{11} + p_{12}\sigma_{11}\sigma_{21} + p_{11}p_{12}\sigma_{12}\sigma_{21},
\end{equation}
\begin{align}    
    \sigma_{21} \sigma_{12} + p_{12} \sigma_{22} \sigma_{11} = &\ p_{11} \sigma_{11} \sigma_{12} + p_{11}p_{12}\sigma_{12}\sigma_{11} + p_{12}\sigma_{11}\sigma_{22} + p_{12}^2\sigma_{12}\sigma_{21}, \\
    \sigma_{22}\sigma_{12} = &\ p_{11} \sigma_{12}^2 + p_{12}\sigma_{12}\sigma_{22}, \\
    \sigma_{20} \sigma_{11} + \sigma_{21}\sigma_{10} + \underline{\rho_1 \sigma_{22}\sigma_{11}} = &\ p_{11} (\sigma_{10} \sigma_{11} + \sigma_{11}\sigma_{10} + \rho_{1}\sigma_{12}\sigma_{11}) \notag \\
    &\ + p_{12} (\sigma_{10} \sigma_{21} + \sigma_{11} \sigma_{20} + \rho_1 \sigma_{12} \sigma_{21}) + \tau_1 \sigma_{11} + \tau_2 \sigma_{21}, \\
    \sigma_{20} \sigma_{12} + \sigma_{22} \sigma_{10} + \underline{\rho_2 \sigma_{22} \sigma_{11}} = &\ p_{11} (\sigma_{10} \sigma_{12} + \sigma_{12}\sigma_{10} + \rho_{2} \sigma_{12} \sigma_{11}) \notag \\
    &\ +p_{12} (\sigma_{10} \sigma_{22} + \sigma_{12} \sigma_{20} + \rho_2 \sigma_{12} \sigma_{21}) +  \tau_{1} \sigma_{12} + \tau_2 \sigma_{22}, \\
    \sigma_{20} \sigma_{10} + \underline{\rho_0 \sigma_{22} \sigma_{11}} = &\ p_{11} (\sigma_{10}^2 + \rho_0 \sigma_{12} \sigma_{11}) + p_{12} (\sigma_{10} \sigma_{20} + \rho_0 \sigma_{12} \sigma_{21})\notag \\
    &\ + \tau_1 \sigma_{10} + \tau_2 \sigma_{10} + \tau_0 {\rm id}_R. \label{Carvalhoetal2011(1.VIII)}
\end{align}
\end{proposition}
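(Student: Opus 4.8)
The plan is to treat this as a Poincar\'e--Birkhoff--Witt--type statement and prove it with Bergman's Diamond Lemma. Conditions (i), (ii) and (iv) of Definition \ref{DoubleOreDefinition}(a) hold for $B$ by construction, since $B$ is \emph{defined} as the $\Bbbk$-algebra generated by $R$, $y_1$, $y_2$ subject to (\ref{Carvalhoetal2011(1.I)}) and (\ref{Carvalhoetal2011(1.II)}); hence the entire content of the proposition is that condition (iii), namely that $\{y_1^i y_2^j : i, j \ge 0\}$ is a free left $R$-basis of $B$, is equivalent to the six identities (\ref{Carvalhoetal2011(1.III)})--(\ref{Carvalhoetal2011(1.VIII)}).

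First I would set up a reduction system by orienting the relations as rewriting rules: the internal multiplication of $R$; the rules $y_1 r \rightsquigarrow \sigma_{11}(r)y_1 + \sigma_{12}(r)y_2 + \delta_1(r)$ and $y_2 r \rightsquigarrow \sigma_{21}(r)y_1 + \sigma_{22}(r)y_2 + \delta_2(r)$, which move elements of $R$ to the right of the indeterminates; and the rule $y_2 y_1 \rightsquigarrow p_{12}y_1y_2 + p_{11}y_1^2 + \tau_1 y_1 + \tau_2 y_2 + \tau_0$, which reorders the indeterminates. One checks that this system is compatible with a well-founded monomial order (order words first by degree in the $y$'s, then penalize each occurrence of $y_2$ standing to the left of a $y_1$, then penalize each element of $R$ standing to the right of a $y$), so that every element of $B$ rewrites to a normal form $\sum_{i,j} r_{ij}\, y_1^i y_2^j$ and the monomials $\{y_1^i y_2^j\}$ span $B$ over $R$. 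By the Diamond Lemma this spanning set is a \emph{basis} if and only if every overlap ambiguity of the system is resolvable.

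Then I would enumerate the ambiguities. Those of the form $y_1(rs)$ and $y_2(rs)$ --- together with, if one wants full rigor, the ambiguities produced by a presentation of $R$ --- are resolvable precisely because $\sigma\colon R\to M_{2\times 2}(R)$ is an algebra homomorphism and $\delta\colon R\to M_{2\times 1}(R)$ is a $\sigma$-derivation, which are part of the hypotheses (see the discussion following Definition \ref{DoubleOreDefinition} and \cite[Lemma 1.7]{ZhangZhang2008}). The only genuinely new ambiguity is the overlap $y_2\, y_1\, r$ with $r \in R$: one reduction path first applies the $y_2 y_1$ rule and then pushes $r$ rightward through $y_1 y_2$ and $y_1^2$, while the other first pushes $r$ past $y_1$, then past $y_2$, and finally re-sorts every $y_2 y_1$ that appears. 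Expanding both paths into normal form and equating, for each fixed $r$, the coefficients of the six basis monomials $y_1^2$, $y_1 y_2$, $y_2^2$, $y_1$, $y_2$, $1$ yields six identities of $\Bbbk$-linear endomorphisms of $R$; a direct computation --- bearing in mind the conventions $\sigma_{i0} = \delta_i$ and that $\rho_k$ is right multiplication by $\tau_k$ --- identifies these six identities with (\ref{Carvalhoetal2011(1.III)})--(\ref{Carvalhoetal2011(1.VIII)}). Hence the sole ambiguity resolves iff these six relations hold, and the Diamond Lemma yields both implications at once. (Alternatively, one can route the same computation through Proposition \ref{Carvalhoetal2011Proposition1.4}, where the existence of the extension is reduced to associativity of a multiplication on $R \otimes_{\Bbbk} C$; associativity then unwinds to the identical six overlap conditions.)

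The main obstacle will be the careful expansion of the single overlap $y_2 y_1 r$: one must track the $p_{ij}$'s through two successive applications of the $y_i r$ rules and, crucially, observe that the underlined terms in (\ref{Carvalhoetal2011(1.VIII)}) and its two predecessors arise from re-sorting the $y_2 y_1$ produced after applying $\sigma_{22}$, paired with the tail $\tau$ --- so the right-multiplication operators $\rho_k$ appear exactly in the combinations displayed. A secondary, more bureaucratic point is justifying the use of the Diamond Lemma over the (possibly noncommutative) coefficient ring $R$ rather than over $\Bbbk$; this is handled by working in the free $\Bbbk$-algebra on a generating set of $R$ adjoined with $y_1, y_2$, where the extra ambiguities coming from the relations of $R$ are visibly resolvable, or by invoking a base-ring version of the lemma. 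Everything else is routine bookkeeping.
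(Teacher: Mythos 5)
The paper itself gives no proof of this proposition: it is quoted from \cite{ZhangZhang2008} and \cite{Carvalhoetal2011}, and Remark \ref{RemarkCarvalho}(1) indicates that the intended ``direct proof'' goes through Proposition \ref{Carvalhoetal2011Proposition1.4}, i.e.\ through checking associativity of the extended multiplication on $R\otimes_{\Bbbk}C$. Your Diamond Lemma argument is the other standard route --- essentially the one underlying \cite[Lemma 1.10 and Proposition 1.11]{ZhangZhang2008} --- and its skeleton is correct. You rightly observe that conditions (a)(i), (ii), (iv) of Definition \ref{DoubleOreDefinition} hold by construction, so the whole content is left $R$-freeness on $\{y_1^iy_2^j\}$; that the overlaps $y_i(rs)$ resolve exactly because $\sigma$ is an algebra homomorphism and $\delta$ a $\sigma$-derivation; and that the single remaining overlap $y_2y_1r$, expanded along its two reduction paths and compared on the six monomials $y_1^2$, $y_1y_2$, $y_2^2$, $y_1$, $y_2$, $1$, yields precisely (\ref{Carvalhoetal2011(1.III)})--(\ref{Carvalhoetal2011(1.VIII)}); your identification of the underlined $\rho_k\sigma_{22}\sigma_{11}$ terms as coming from re-sorting the $y_2y_1$ created by $\sigma_{22}$ against the tail is exactly right (a spot-check of the $y_1^2$-coefficient reproduces (\ref{Carvalhoetal2011(1.III)}) verbatim). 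What the Diamond Lemma buys is both implications at once; what the $R\otimes_{\Bbbk}C$ route buys is that the only associativity constraints to verify are those on triples $(y_2,y_1,r)$, which unwind to the same six identities. Either is acceptable.

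One technical point needs repair: the well-founded order you propose is not a semigroup partial order, because the ``inversion count'' is not stable under multiplication when a rule changes the multiset of letters. For instance $y_1^2$ and $y_2y_1$ are separated by the inversion count, but after left-multiplying by $y_2$ the words $y_2y_1y_1$ and $y_2y_2y_1$ both have two inversions and tie on your third criterion, so the reduction of $y_2y_2y_1$ producing the term $p_{11}y_2y_1^2$ is not strictly decreasing. Replacing your second and third criteria by the degree-lexicographic order with $y_1<y_2$ (and the usual weighting that pushes $R$-letters leftward) fixes this; it is the standard choice for Ore-type presentations and does not affect the rest of the argument. The remaining caveats you flag --- the base-ring version of the lemma and the ambiguities internal to a presentation of $R$ --- are handled exactly as you indicate.
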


\begin{remark}[{\cite[Remark 1.6]{Carvalhoetal2011}}]\label{RemarkCarvalho}
    \begin{enumerate}
        \item [\rm (1)] Proposition \ref{Carvalhoetal2011Proposition1.4} can be used to obtain a direct proof of Proposition \ref{Carvalhoetal2011Proposition1.5}.
        
        \item [\rm (2)] Proposition \ref{Carvalhoetal2011Proposition1.5} implies the uniqueness, up to isomorphism, of a right double extension of $R$, with given $\sigma, \delta, P$ and $\tau$, provided such an extension exists. Indeed, assume $\overline{B} = R_P[y_1, y_2; \sigma, \delta, \tau]$ is a right double extension of $R$. Then, by \cite[Lemmas 1.7 and 1.10(b)]{ZhangZhang2008}, the data $\sigma, \delta, P$ and $\tau$ satisfy the conditions of Proposition \ref{Carvalhoetal2011Proposition1.5}. Let $B$ be as in this proposition. Then, there is an algebra homomorphism from $B$ to $\overline{B}$ which restricts to the identity on $R$ and maps $y_i\in B$ to the corresponding element $y_i \in \overline{B}$, $i = 1, 2$. Since $B$ is a free left $R$-module with basis $\left\{y_1^{i} y_2^{j} \mid i, j \ge 0\right\}$ and the same holds for $\overline{B}$, this map is an isomorphism, thus proving uniqueness.  
    \end{enumerate}
\end{remark}

Zhang and Zhang \cite[Remark 1.4]{ZhangZhang2008} noticed that by choosing a suitable basis of the vector space $\Bbbk y_1 + \Bbbk y_2$, the following result holds.

\begin{proposition}[{\cite[Lemma 1.7]{Carvalhoetal2011}}]\label{Carvalhoetal2011Lemma1.7}
    Let $B = R_P [y_1, y_2; \sigma, \delta, \tau]$ be a right double Ore extension.
    \begin{enumerate}
        \item [\rm (1)] If $p_{11} \neq 0$ and $p_{12} = 1$, then
        \[
        B\cong R_{\{1, 1\}} \left[ \overline{y}_1, \overline{y}_2; \begin{bmatrix} \sigma_{11} & p_{11} \sigma_{12} \\ p_{11}^{-1} \sigma_{21} & \sigma_{22} \end{bmatrix}, \begin{bmatrix}  p_{11} \delta_1 \\ \delta_2 \end{bmatrix}, \overline{\tau} \right],
        \]
        where $\overline{\tau} = \{p_{11}\tau_0, \tau_1, p_{11} \tau_2\}, \overline{y}_1 = p_{11} y_1$ and $\overline{y}_2 = y_2$.
        \item [\rm (2)] If $p_{12} \neq 1$, then
        \[
        B\cong A_{\{p_{12}, 0\}} \left[ \overline{y}_1, \overline{y}_2; \begin{bmatrix} \sigma_{11} - q\sigma_{12} & \sigma_{12} \\ \sigma_{21} + q(\sigma_{11} - \sigma_{22}) - q^2\sigma_{12} & \sigma_{22} + q\sigma_{12} \end{bmatrix}, \begin{bmatrix} \delta_1 \\ \delta_2 + q\delta_1 \end{bmatrix}, \overline{\tau} \right],
        \]
        where $q = \frac{p_{11}}{p_{12} -1 }$, $\tau = \{\tau_0, \tau_1 - q\tau_2, \tau_2\}$, $\overline{y}_1 = y_1$ and $\overline{y}_2 = y_2 + qy_1$.
    \end{enumerate}
\end{proposition}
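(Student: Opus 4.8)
The plan is to obtain both isomorphisms from a single mechanism: an invertible $\Bbbk$-linear change of the new indeterminates, encoded by a matrix $C\in M_{2\times 2}(\Bbbk)$ that is invertible. Writing $\begin{bmatrix}\overline{y}_1\\ \overline{y}_2\end{bmatrix}=C\begin{bmatrix}y_1\\ y_2\end{bmatrix}$, I would take $C=\begin{bmatrix}p_{11}&0\\0&1\end{bmatrix}$ in part (1) --- invertible precisely because $p_{11}\neq 0$ --- and $C=\begin{bmatrix}1&0\\ q&1\end{bmatrix}$ with $q=\frac{p_{11}}{p_{12}-1}$ in part (2) --- well defined because $p_{12}\neq 1$, and unipotent, hence invertible. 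Since the entries of $C$ lie in $\Bbbk$, which maps to the centre of $R$, the inverse substitution shows at once that $R,\overline{y}_1,\overline{y}_2$ generate $B$ (condition (a)(i)), that $R\overline{y}_1+R\overline{y}_2+R=Ry_1+Ry_2+R$ so condition (a)(iv) is inherited, and that $\{\overline{y}_1^{i}\overline{y}_2^{j}\}$ is again a free left $R$-basis of $B$ (condition (a)(iii)): in part (1) it is a unit rescaling of $\{y_1^{i}y_2^{j}\}$, while in part (2) the change of basis it induces on each homogeneous component of the filtration of $B$ by total degree in $y_1,y_2$ is unipotent, hence invertible over $R$.

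The first genuine computation I would carry out is to rewrite the defining relation \eqref{Carvalhoetal2011(1.I)} in the new variables. In part (1), multiplying \eqref{Carvalhoetal2011(1.I)} on the left by $p_{11}$ and using $p_{12}=1$ together with $y_1=p_{11}^{-1}\overline{y}_1$ and $y_2=\overline{y}_2$, I would collect monomials to obtain a relation of the form \eqref{Carvalhoetal2011(1.I)} with parameter $\{1,1\}$ and tail $\overline{\tau}=\{p_{11}\tau_0,\tau_1,p_{11}\tau_2\}$. In part (2), substituting $y_2=\overline{y}_2-q\overline{y}_1$ and $y_1=\overline{y}_1$ produces a relation whose coefficient of $\overline{y}_1^{\,2}$ equals $p_{11}-q(p_{12}-1)$; the whole point of the choice $q=\frac{p_{11}}{p_{12}-1}$ is that this coefficient vanishes, leaving a relation with parameter $\{p_{12},0\}$ and tail $\overline{\tau}=\{\tau_0,\tau_1-q\tau_2,\tau_2\}$. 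I expect this --- together with the parallel computation for $\sigma$ below --- to be the only step that demands real attention: it is elementary, but one must respect the noncommutativity of $y_1$ and $y_2$ while freely commuting the scalars $p_{12},p_{11},q$ past everything.

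Next I would transform relation \eqref{Carvalhoetal2011(1.II)}. Left-multiplying $\begin{bmatrix}y_1\\ y_2\end{bmatrix}r=\sigma(r)\begin{bmatrix}y_1\\ y_2\end{bmatrix}+\delta(r)$ by $C$ and inserting $C^{-1}C$ gives, for all $r\in R$,
\[
\begin{bmatrix}\overline{y}_1\\ \overline{y}_2\end{bmatrix} r = \bigl(C\sigma(r)C^{-1}\bigr)\begin{bmatrix}\overline{y}_1\\ \overline{y}_2\end{bmatrix} + C\delta(r),
\]
so the new data are $\overline{\sigma}=C\,\sigma(-)\,C^{-1}$ and $\overline{\delta}=C\,\delta(-)$. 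Because $C$ is invertible with entries in the central subfield $\Bbbk$, conjugation by $C$ is unital and multiplicative, hence $\overline{\sigma}\colon R\to M_{2\times 2}(R)$ is still an algebra homomorphism; a one-line computation shows $\overline{\delta}$ is a $\overline{\sigma}$-derivation, and $\Bbbk$-linearity of every entry $\overline{\sigma}_{ij},\overline{\delta}_i$ is immediate. Multiplying out $C\sigma(r)C^{-1}$ and $C\delta(r)$ for the two choices of $C$ then reproduces exactly the matrices displayed in the statement --- in part (1), $\overline{\sigma}=\begin{bmatrix}\sigma_{11}&p_{11}\sigma_{12}\\ p_{11}^{-1}\sigma_{21}&\sigma_{22}\end{bmatrix}$ and $\overline{\delta}=\begin{bmatrix}p_{11}\delta_1\\ \delta_2\end{bmatrix}$, and in part (2) the entries $\sigma_{21}+q(\sigma_{11}-\sigma_{22})-q^2\sigma_{12}$, $\sigma_{22}+q\sigma_{12}$, $\delta_2+q\delta_1$ fall out of the same product.

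At this point conditions (a)(i)--(iv) of Definition \ref{DoubleOreDefinition} have been verified for the generators $\overline{y}_1,\overline{y}_2$ equipped with the data $(\overline{\sigma},\overline{\delta},\overline{P},\overline{\tau})$, with $\overline{P}$ again a pair of scalars of $\Bbbk$, so $B$ is itself a right double Ore extension with those data. Invoking the uniqueness of a right double Ore extension with prescribed $(\sigma,\delta,P,\tau)$ recorded in Remark \ref{RemarkCarvalho}(2), I would conclude that $B$ is isomorphic, via an isomorphism fixing $R$ elementwise, to $R_{\overline{P}}[\overline{y}_1,\overline{y}_2;\overline{\sigma},\overline{\delta},\overline{\tau}]$, which is precisely the assertion in each of the two cases.
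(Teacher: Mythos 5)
Your proof is correct and follows exactly the route the paper indicates for this cited result, namely the remark preceding the proposition that it follows ``by choosing a suitable basis of the vector space $\Bbbk y_1 + \Bbbk y_2$'': your matrices $C$ encode precisely the basis changes $\overline{y}_1 = p_{11}y_1,\ \overline{y}_2 = y_2$ and $\overline{y}_1 = y_1,\ \overline{y}_2 = y_2 + qy_1$, and the computations of $C\sigma(-)C^{-1}$, $C\delta(-)$, the transformed relation \eqref{Carvalhoetal2011(1.I)}, and the vanishing of the $\overline{y}_1^{\,2}$-coefficient $p_{11}-q(p_{12}-1)$ in part (2) all check out. The paper itself gives no proof (it quotes Carvalho et al.), so your write-up simply fleshes out the intended argument, with the concluding appeal to the uniqueness statement in Remark \ref{RemarkCarvalho}(2) being an appropriate way to convert the verification of Definition \ref{DoubleOreDefinition}(a) into the stated isomorphism.
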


Following \cite[p. 2842]{Carvalhoetal2011}, let $B = R_P[y_1, y_2; \sigma, \delta, \tau]$ be a right double Ore extension and suppose that $p_{12} \neq 1$. Then, from the ideas above, by choosing adequate generators $\overline{y}_i$ and (possibly) modifying the data $\sigma, \delta, \tau$ one can assume that $p_{11} = 0$. If $\overline{B} = R_P[\overline{y}_1, \overline{y}_2 ; \overline{\sigma}, \overline{\delta}, \overline{\tau}]$ is a right double Ore extension with $p_{11} = 0$, then $\overline{B}$ has a natural filtration, given by setting ${\rm deg}\ R = 0$ and ${\rm deg} \overline{y}_1 = {\rm deg} \overline{y}_2 = 1$. It can be seen that in view of relations \ref{Carvalhoetal2011(1.I)} and \ref{Carvalhoetal2011(1.II)}, that the associated graded algebra $G(B)$ is isomorphic to $R_P[\overline{y}_1, \overline{y}_2; \overline{\sigma}, \{0, 0, 0\}]$. In this way, we have the following assertion:

\begin{proposition}[{\cite[Corollary 1.8]{Carvalhoetal2011}}]
Suppose that $B = R_P[y_1, y_2; \sigma, \delta, \tau]$ is a right double Ore extension of $R$, with $p_{12} \neq 1$. Then, there exists a filtration on $B$ such that the associated graded algebra $G(B)$ can be presented as follows: $G(B)$ is generated over $R$ by indeterminates $z_1, z_2$; it is free as a left $R$-module with basis $\{z_1^{i} z_2^{j} \mid i, j \ge 0\}$; multiplication in $G(B)$ is given by multiplication in $R$ and the conditions $z_2 z_1 = p_{12} z_1 z_2$ and $z_1R + z_2R \subseteq Rz_1 + Rz_2$, with $\begin{bmatrix} z_1 \\ z_2 \end{bmatrix} r = \overline{\sigma}(r) \begin{bmatrix} z_1 \\ z_2 \end{bmatrix}$, where $\overline{\sigma}$ is obtained from $\sigma$ and $q = \frac{p_{11}}{p_{12} - 1}$ as in Proposition \ref{Carvalhoetal2011Lemma1.7} (2). 

Furthermore, in case $B$ is a double Ore extension, then $G(B)$ is also free as a right $R$-module with basis $\{z_2^{i} z_1^{j} \mid i, j \ge 0\}$ and $z_1 R + z_2 R = Rz_1 + Rz_2$.
\end{proposition}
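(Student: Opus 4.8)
The plan is to carry out the two-stage strategy announced in the paragraph preceding the statement: first a change of generators reduces everything to the case $p_{11}=0$, and then a natural filtration on $B$ can be written down by hand and its associated graded algebra identified. The hypothesis $p_{12}\neq 1$ is used precisely to make the first step available: since $p_{12}\neq 1$, Proposition~\ref{Carvalhoetal2011Lemma1.7}(2), applied with $q=\frac{p_{11}}{p_{12}-1}$, $\overline{y}_1=y_1$, $\overline{y}_2=y_2+qy_1$, yields an isomorphism $B\cong R_{\{p_{12},0\}}[\overline{y}_1,\overline{y}_2;\overline{\sigma},\overline{\delta},\overline{\tau}]$ with $\overline{\sigma}$ the explicit matrix displayed there. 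Thus one may assume $p_{11}=0$, so that \eqref{Carvalhoetal2011(1.I)} reads $\overline{y}_2\overline{y}_1=p_{12}\overline{y}_1\overline{y}_2+\overline{\tau}_1\overline{y}_1+\overline{\tau}_2\overline{y}_2+\overline{\tau}_0$ and \eqref{Carvalhoetal2011(1.II)} reads $\begin{bmatrix}\overline{y}_1\\\overline{y}_2\end{bmatrix}r=\overline{\sigma}(r)\begin{bmatrix}\overline{y}_1\\\overline{y}_2\end{bmatrix}+\overline{\delta}(r)$.

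Next I would set up the filtration. Transporting Definition~\ref{DoubleOreDefinition}(a)(iii) along the isomorphism, $B$ is free as a left $R$-module on $\{\overline{y}_1^{i}\overline{y}_2^{j}:i,j\ge 0\}$; declare $\deg R=0$, $\deg\overline{y}_1=\deg\overline{y}_2=1$, and put $F_n=\bigoplus_{i+j\le n}R\,\overline{y}_1^{i}\overline{y}_2^{j}$, an exhaustive chain of left $R$-submodules. The substantive point is that $\{F_n\}_{n\ge 0}$ is an algebra filtration, $F_mF_n\subseteq F_{m+n}$: by induction on $n$ it suffices to see $\overline{y}_1F_n,\overline{y}_2F_n\subseteq F_{n+1}$, and for that one carries the left generator across using only the two rewriting moves ``replace $\overline{y}_2\overline{y}_1$ by $p_{12}\overline{y}_1\overline{y}_2$ plus terms of degree $\le 1$'' and ``replace $\overline{y}_ir$ by $\overline{\sigma}_{i1}(r)\overline{y}_1+\overline{\sigma}_{i2}(r)\overline{y}_2+\overline{\delta}_i(r)$'', neither of which raises degree — here $p_{11}=0$ is essential, as otherwise a surviving term $p_{11}\overline{y}_1^{2}$ of degree $2$ would wreck the estimate. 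Hence $G(B):=\bigoplus_{n\ge 0}F_n/F_{n-1}$ (with $F_{-1}=0$) is a graded $\Bbbk$-algebra, generated over $R$ by the images $z_1,z_2\in F_1/F_0$ of $\overline{y}_1,\overline{y}_2$; and since $F_n/F_{n-1}\cong\bigoplus_{i+j=n}R\,\overline{y}_1^{i}\overline{y}_2^{j}$ as left $R$-modules, $G(B)$ is free as a left $R$-module with basis $\{z_1^{i}z_2^{j}:i,j\ge 0\}$.

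It then remains to read off the relations and treat the double-extension case. In $G(B)$ the lower-degree tails vanish, so $\overline{y}_2\overline{y}_1=p_{12}\overline{y}_1\overline{y}_2+(\deg\le 1)$ becomes $z_2z_1=p_{12}z_1z_2$, while $\overline{y}_ir=\sum_j\overline{\sigma}_{ij}(r)\overline{y}_j+\overline{\delta}_i(r)$ becomes $z_ir=\sum_j\overline{\sigma}_{ij}(r)z_j$, i.e.\ $\begin{bmatrix}z_1\\z_2\end{bmatrix}r=\overline{\sigma}(r)\begin{bmatrix}z_1\\z_2\end{bmatrix}$, whence $z_1R+z_2R\subseteq Rz_1+Rz_2$; this presents $G(B)$ as $R_P[z_1,z_2;\overline{\sigma},\{0,0,0\}]$ with the $\overline{\sigma}$ of Proposition~\ref{Carvalhoetal2011Lemma1.7}(2). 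If moreover $B$ is a double Ore extension, Definition~\ref{DoubleOreDefinition}(b) supplies a free right $R$-module basis $\{\overline{y}_2^{i}\overline{y}_1^{j}\}$ and the equality $\overline{y}_1R+\overline{y}_2R+R=R\overline{y}_1+R\overline{y}_2+R$; since the defining relations are degree-filtered, $F_n$ is also $\bigoplus_{i+j\le n}\overline{y}_2^{i}\overline{y}_1^{j}R$, and the symmetric argument gives that $G(B)$ is free as a right $R$-module on $\{z_2^{i}z_1^{j}\}$ with $z_1R+z_2R=Rz_1+Rz_2$.

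The one genuinely non-formal step is the claim $F_mF_n\subseteq F_{m+n}$: it is in essence a normal-form (diamond-lemma) argument guaranteeing that reordering an arbitrary product of the generators into left-normal form never increases the assigned degree, and this is exactly where the left-module freeness of Definition~\ref{DoubleOreDefinition}(a)(iii) (and (b)(ii) on the right) does the work. Computing $\overline{\sigma}$, discarding the tails in $G(B)$, and transcribing the right-handed statement are routine.
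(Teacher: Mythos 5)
Your proposal is correct and follows the same route the paper sketches in the paragraph preceding the statement (itself the argument of Carvalho et al.): reduce to $p_{11}=0$ via Proposition \ref{Carvalhoetal2011Lemma1.7}(2) with $q=\frac{p_{11}}{p_{12}-1}$, impose the natural filtration with $\deg R=0$ and $\deg\overline{y}_1=\deg\overline{y}_2=1$, and read off the trimmed relations in the associated graded algebra. Your added detail on why $F_mF_n\subseteq F_{m+n}$ (the rewriting moves do not raise degree precisely because $p_{11}=0$) is exactly the point the paper leaves implicit.
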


Next, we recall the key results formulated by Carvalho et al. \cite{Carvalhoetal2011} about relations between double Ore extensions and two-step iterated Ore extensions.

\begin{proposition}[{\cite[Theorem 2.2]{Carvalhoetal2011}}]\label{Theorem 2.2}
Let $R, B$ be $\Bbbk$-algebras such that $B$ is an extension of $R$. Assume $P = \{p_{12}, p_{11}\} \subseteq \Bbbk$, $\tau = \{\tau_0, \tau_1, \tau_2\} \subseteq R$, $\sigma$ is an algebra homomorphism from $R$ to $M_{2}(R)$ and $\delta$ is a $\sigma$-derivation from $R$ to $M_{2\times 1}(R)$. 
\begin{itemize}
    \item [\rm (1)] The following conditions are equivalent:
    \begin{itemize}
        \item [\rm (i)] $B = R_P[y_1, y_2; \sigma, \delta, \tau]$ is a right double extension of $R$ which can be presented as an iterated Ore extension $R[y_1;\sigma_1, d_1][y_2; \sigma_2, d_2]$;
        \item[\rm (ii)] $B = R_P[y_1, y_2; \sigma, \delta, \tau]$ is right double extension of $R$ with $\sigma_{12} = 0$;
        \item[\rm (iii)] $R[y_1;\sigma_1, d_1][y_2; \sigma_2, d_2]$ is and iterated Ore extensions such that 
        \begin{align*}
            \sigma_2(R) \subseteq (R)&, \ \ \ \sigma_2(y_1)= p_{12}y_1 + \tau_2, \\
            d_2(R) \subseteq Ry_1 + R&, \ \ \ d_2(y_1) = p_{11}y^2_1 + \tau_1y_1 + \tau_0,
        \end{align*}
        for some $p_{ij} \in \Bbbk$ and $\tau_i \in R$. The maps $\sigma, \delta, \sigma_i$ and $\delta_i$, $i = 1,2$, are related by
    \[
    \sigma=\begin{bmatrix} \sigma_{1} & 0 \\ \sigma_{21} & \sigma_{2}|_R \end{bmatrix},
    \ \ \ \delta(a)= \begin{bmatrix} d_1(a)\\  d_2(r) - \sigma_{21}(a)y_1 \end{bmatrix}, \ \ \  \text{for all} \ a \in R.
    \]
    \end{itemize}

\item[\rm (2)] If one of the equivalent statements from (1) holds, then $B$ is a double extension of $R$ if and only if $\sigma_1 = \sigma_{11}$ and $\sigma_2|_R = \sigma_{22}$ are automorphisms of $R$ and $p_{12} \neq 0$.
\end{itemize}
\end{proposition}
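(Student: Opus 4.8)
The plan is to prove the implications $\mathrm{(i)}\Rightarrow\mathrm{(ii)}$, $\mathrm{(ii)}\Rightarrow\mathrm{(iii)}$ and $\mathrm{(iii)}\Rightarrow\mathrm{(i)}$ in part (1), and then to deal with part (2) by passing to the right $R$-module structure. For $\mathrm{(i)}\Rightarrow\mathrm{(ii)}$: if $B$ is at the same time the right double extension $R_P[y_1,y_2;\sigma,\delta,\tau]$ and the iterated Ore extension $R[y_1;\sigma_1,d_1][y_2;\sigma_2,d_2]$ on the same generators, then for each $r\in R$ the identity $y_1r=\sigma_1(r)y_1+d_1(r)$, valid inside $R[y_1;\sigma_1,d_1]$, must agree with the first row of \eqref{Carvalhoetal2011(1.II)}, namely $y_1r=\sigma_{11}(r)y_1+\sigma_{12}(r)y_2+\delta_1(r)$; comparing coefficients in the free left $R$-basis $\{y_1^{i}y_2^{j}\}$ of $B$ forces $\sigma_{12}(r)=0$ for all $r$, hence $\sigma_{12}=0$ (and $\sigma_1=\sigma_{11}$, $d_1=\delta_1$).

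The substance of part (1) is $\mathrm{(ii)}\Rightarrow\mathrm{(iii)}$. Assume $\sigma_{12}=0$. Since $\sigma\colon R\to M_{2}(R)$ is a lower triangular algebra homomorphism, $\sigma_{11}$ is an algebra endomorphism of $R$, and the first component of $\delta(rr')=\sigma(r)\delta(r')+\delta(r)r'$ reads $\delta_1(rr')=\sigma_{11}(r)\delta_1(r')+\delta_1(r)r'$, so $\delta_1$ is a $\sigma_{11}$-derivation and the Ore extension $A=R[y_1;\sigma_{11},\delta_1]$ is defined. The canonical algebra map $A\to B$ fixing $R$ and sending $y_1$ to $y_1$ is well defined by \eqref{Carvalhoetal2011(1.II)}, and it is injective because its image consists of left $R$-combinations of powers of $y_1$, which lie in the basis $\{y_1^{i}y_2^{j}\}$. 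Identifying $A$ with its image, $B=\bigoplus_{i,j}Ry_1^{i}y_2^{j}=\bigoplus_{j}Ay_2^{j}$ is a free left $A$-module on $\{y_2^{j}\}$. Relations \eqref{Carvalhoetal2011(1.I)} and \eqref{Carvalhoetal2011(1.II)} give $y_2r\in Ay_2+A$ for $r\in R$ and $y_2y_1\in Ay_2+A$, and since the set of $a\in A$ with $y_2a\in Ay_2+A$ is a $\Bbbk$-subalgebra containing $R$ and $y_1$, it equals $A$; by freeness there are unique $\sigma_2(a),d_2(a)\in A$ with $y_2a=\sigma_2(a)y_2+d_2(a)$, and expanding $y_2(aa')=(y_2a)a'$ in two ways shows $\sigma_2$ is an algebra endomorphism of $A$ and $d_2$ a $\sigma_2$-derivation. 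Hence $A[y_2;\sigma_2,d_2]\to B$ is an isomorphism, i.e.\ $B=R[y_1;\sigma_{11},\delta_1][y_2;\sigma_2,d_2]$, and one reads off $\sigma_1=\sigma_{11}$, $d_1=\delta_1$, $\sigma_2|_R=\sigma_{22}$, $\sigma_2(y_1)=p_{12}y_1+\tau_2$, $d_2(y_1)=p_{11}y_1^{2}+\tau_1y_1+\tau_0$; writing $d_2(r)=\sigma_{21}(r)y_1+\delta_2(r)$ in $Ry_1\oplus R$ gives $\sigma_2(R)\subseteq R$, $d_2(R)\subseteq Ry_1+R$ and the asserted matrix formulas for $\sigma,\delta$.

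For $\mathrm{(iii)}\Rightarrow\mathrm{(i)}$ one checks Definition~\ref{DoubleOreDefinition}(a) directly for $B=R[y_1;\sigma_1,d_1][y_2;\sigma_2,d_2]$ with the stated data: $B$ is generated by $R,y_1,y_2$; \eqref{Carvalhoetal2011(1.I)} is $y_2y_1=\sigma_2(y_1)y_2+d_2(y_1)$ rearranged; freeness as a left $R$-module on $\{y_1^{i}y_2^{j}\}$ comes from composing the two Ore bases; and \eqref{Carvalhoetal2011(1.II)}, with the prescribed lower triangular $\sigma$, holds because $y_1r=\sigma_1(r)y_1+d_1(r)$ and $y_2r=\sigma_2(r)y_2+d_2(r)$ with $\sigma_2(r)\in R$, $d_2(r)\in Ry_1+R$ (that this $\sigma$ is a homomorphism and $\delta$ a $\sigma$-derivation follows from \eqref{Carvalhoetal2011(1.II)} and left freeness, or from Proposition~\ref{Carvalhoetal2011Proposition1.5}). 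For part (2), assume the equivalent conditions of (1). If $\sigma_{11},\sigma_{22}$ are automorphisms of $R$ and $p_{12}\neq0$, then $A=R[y_1;\sigma_{11},\delta_1]$ is also free as a right $R$-module on $\{y_1^{i}\}$; the endomorphism $\sigma_2$ of $A$ (with $\sigma_2|_R=\sigma_{22}$, $\sigma_2(y_1)=p_{12}y_1+\tau_2$ and $p_{12}$ invertible) is an automorphism of $A$ --- surjectivity is immediate and injectivity follows from a leading-term argument for the $y_1$-filtration --- so $B=A[y_2;\sigma_2,d_2]$ is free as a right $A$-module on $\{y_2^{j}\}$ and hence $B$ is free as a right $R$-module on $\{y_2^{j}y_1^{i}\}$; moreover $Ry_1\subseteq y_1R+R$ and $Ry_2\subseteq y_2R+d_2(R)\subseteq y_2R+Ry_1+R\subseteq y_1R+y_2R+R$, so $Ry_1+Ry_2+R=y_1R+y_2R+R$ and $B$ is a double extension. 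Conversely, if $B$ is a double extension then $p_{12}\neq0$ by definition, and comparing the two sides of $Ry_1+Ry_2+R=y_1R+y_2R+R$ in the left $R$-basis of $B$ together with the right-module freeness forces $\sigma_{11}$ and $\sigma_{22}$ to be bijective; this last point is the specialization to $\sigma_{12}=0$ of the invertibility condition on $\sigma$ that distinguishes double extensions among right double extensions in \cite{ZhangZhang2008}.

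The step I expect to require the most care is $\mathrm{(ii)}\Rightarrow\mathrm{(iii)}$ --- in particular, making rigorous the embedding $A\hookrightarrow B$ and verifying that $\sigma_2$ and $d_2$ are defined on all of $A$, not merely on $R$ and $y_1$ --- and, within part (2), the converse direction, where deducing the injectivity of $\sigma_{11}$ and $\sigma_{22}$ from the bare hypothesis that $B$ is an honest (rather than merely right) double extension is precisely the point at which the structural results of \cite{ZhangZhang2008} enter.
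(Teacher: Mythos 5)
The paper does not actually prove this statement: it is quoted from Carvalho, Lopes and Matczuk \cite[Theorem 2.2]{Carvalhoetal2011} without proof, so your proposal can only be measured against the argument in that reference, which it essentially reproduces. The implications (i)$\Rightarrow$(ii), (ii)$\Rightarrow$(iii) and (iii)$\Rightarrow$(i) are correct and complete. In particular the key construction in (ii)$\Rightarrow$(iii) --- embedding $A=R[y_1;\sigma_{11},\delta_1]$ into $B$ via the left basis $\{y_1^iy_2^j\}$, observing that $\{a\in A: y_2a\in Ay_2+A\}$ is a subalgebra containing $R$ and $y_1$ and hence equals $A$, and extracting $\sigma_2,d_2$ from left $A$-freeness --- is exactly the right mechanism, and the identifications $\sigma_2|_R=\sigma_{22}$ and $d_2(r)=\sigma_{21}(r)y_1+\delta_2(r)$ do follow from uniqueness of the decomposition in $Ay_2\oplus A$ together with the second row of (\ref{Carvalhoetal2011(1.II)}). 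The forward half of (2) is also sound, including the leading-term argument for the injectivity of $\sigma_2$ on $A$.

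The one under-justified step is the converse half of (2). A direct coefficient comparison of $ay_1=y_1b+y_2c+d$ and $ay_2=y_1b'+y_2c'+d'$ in the left $R$-basis yields $a=\sigma_{22}(c')$ (so $\sigma_{22}$ is surjective), but for $ay_1$ it only yields $a=\sigma_{11}(b)+\sigma_{21}(c)$ with $\sigma_{22}(c)=0$; thus surjectivity of $\sigma_{11}$ already presupposes injectivity of $\sigma_{22}$, and neither injectivity statement falls out of this comparison or of right-module freeness in any immediate way. The phrase ``together with the right-module freeness forces $\sigma_{11}$ and $\sigma_{22}$ to be bijective'' therefore hides the real work. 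You correctly locate the missing input in \cite{ZhangZhang2008}: what is needed is their result that for a (two-sided) double extension the map $\sigma:R\to M_{2\times 2}(R)$ is invertible, which for lower triangular $\sigma$ is equivalent to the diagonal entries being automorphisms of $R$. Provided that lemma is cited explicitly rather than re-derived, the proof is complete; as written, this one step is a pointer rather than an argument.
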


\begin{proposition}[{\cite[Theorem 2.4]{Carvalhoetal2011}}]\label{Carvalhoetal2011Theorem2.4}
Let $B = R_P[y_1, y_2; \sigma, \delta, \tau]$ be a right double extension of the $\Bbbk$-algebra $R$, where $P = \{p_{12}, p_{11}\} \in \Bbbk$, $\tau = \{\tau_2, \tau_1, \tau_0\} \subseteq R$, $\sigma: R \to M_2(R)$ is an algebra homomorphism and $\delta: R \to M_{2 \times 1}(R)$ is a $\sigma$-derivation. Then, $B$ can be presented as an iterated Ore extension $R[y_2; \sigma'_2, d'_2][y_1; \sigma'_1, d'_1]$ if and only if $\sigma_{21} = 0$, $p_{12} \neq 0$ and $p_{11} = 0$. In this case, $B$ is a double extension if and only if $\sigma'_2 = \sigma_{22}$ and $\sigma'_1|_R = \sigma_{11}$ are automorphisms of $R$.
\end{proposition}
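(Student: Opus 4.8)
The plan is to adapt the proof of Proposition~\ref{Theorem 2.2} (i.e. \cite[Theorem~2.2]{Carvalhoetal2011}), interchanging the roles of $y_1$ and $y_2$ so that the \emph{inner} Ore extension is built on $y_2$ and the \emph{outer} one on $y_1$. The extra hypotheses $p_{12}\neq 0$ and $p_{11}=0$ (compared with only $\sigma_{12}=0$ in Proposition~\ref{Theorem 2.2}) are precisely what is needed to solve the relation \eqref{Carvalhoetal2011(1.I)} for $y_1y_2$ in the order required by the iterated extension $R[y_2;\sigma'_2,d'_2][y_1;\sigma'_1,d'_1]$: dividing by $p_{12}$ needs $p_{12}\neq 0$, and the would‑be $y_1^2$‑term on the right of the resulting identity forces $p_{11}=0$.

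For the ``only if'' part, start from an algebra isomorphism $B\cong R[y_2;\sigma'_2,d'_2][y_1;\sigma'_1,d'_1]$ which is the identity on $R$ and sends the generators $y_1,y_2$ of $B$ to the corresponding generators. In the iterated extension one has $y_2r=\sigma'_2(r)y_2+d'_2(r)\in Ry_2+R$ for $r\in R$; comparing this with $y_2r=\sigma_{21}(r)y_1+\sigma_{22}(r)y_2+\delta_2(r)$ and using that $\{y_1^iy_2^j\}$ is a left $R$-basis of $B$ gives $\sigma_{21}=0$ (and $\sigma'_2=\sigma_{22}$, $d'_2=\delta_2$). With $\sigma_{21}=0$ the matrix $\sigma$ is triangular, so $\sigma_{22}$ is an algebra endomorphism and $\delta_2$ a $\sigma_{22}$-derivation; hence $S:=R[y_2;\sigma_{22},\delta_2]$ sits in $B$ as the subalgebra generated by $R$ and $y_2$, and, transporting the iterated structure, $B$ is free as a left $S$-module on $\{1,y_1,y_1^2,\dots\}$. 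Now write $y_1y_2=\sigma'_1(y_2)y_1+d'_1(y_2)$ with $\sigma'_1(y_2),d'_1(y_2)\in S$ and substitute into \eqref{Carvalhoetal2011(1.I)}; reading off the $y_1^2$‑, $y_1^1$‑ and $y_1^0$‑components in the decomposition $B=\bigoplus_i S y_1^i$ yields, respectively, $p_{11}=0$; the identity $y_2=p_{12}\sigma'_1(y_2)+\tau_1$, which forces $p_{12}\neq 0$ (otherwise $y_2=\tau_1\in R$, contradicting that $\{y_1^iy_2^j\}$ is a basis); and $d'_1(y_2)=-p_{12}^{-1}(\tau_2y_2+\tau_0)$.

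For the ``if'' part, assume $\sigma_{21}=0$, $p_{11}=0$, $p_{12}\neq0$, form $S=R[y_2;\sigma_{22},\delta_2]$, and define $\sigma'_1,d'_1\colon S\to S$ on generators by $\sigma'_1(r)=\sigma_{11}(r)$, $d'_1(r)=\sigma_{12}(r)y_2+\delta_1(r)$ for $r\in R$, and $\sigma'_1(y_2)=p_{12}^{-1}(y_2-\tau_1)$, $d'_1(y_2)=-p_{12}^{-1}(\tau_2y_2+\tau_0)$. The technical core is to check that these prescriptions are compatible with the defining relation $y_2r=\sigma_{22}(r)y_2+\delta_2(r)$ of $S$, so that $\sigma'_1$ extends to an algebra endomorphism and $d'_1$ to a $\sigma'_1$-derivation of $S$; here one specializes the six identities \eqref{Carvalhoetal2011(1.III)}--\eqref{Carvalhoetal2011(1.VIII)} of Proposition~\ref{Carvalhoetal2011Proposition1.5} to $\sigma_{21}=0$ and $p_{11}=0$ (routing through Proposition~\ref{Carvalhoetal2011Proposition1.4} is an alternative that lightens the bookkeeping). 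Granting this, $T:=S[y_1;\sigma'_1,d'_1]$ is an iterated Ore extension in which the relations $y_1r=\sigma_{11}(r)y_1+\sigma_{12}(r)y_2+\delta_1(r)$, $y_2r=\sigma_{22}(r)y_2+\delta_2(r)$ and (since $p_{11}=0$) $y_2y_1=p_{12}y_1y_2+\tau_1y_1+\tau_2y_2+\tau_0$ hold, and $p_{12}\neq0$ makes $\{y_1^iy_2^j\}$ a left $R$-basis of $T$ (a degree‑filtration argument reducing the leading behaviour to a quantum plane over $R$). Thus $T$ is a right double extension of $R$ with exactly the data $\sigma,\delta,P,\tau$ of $B$, so the uniqueness in Remark~\ref{RemarkCarvalho}(2) gives $T\cong B$ over $R$, with $\sigma'_2=\sigma_{22}$, $d'_2=\delta_2$.

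For the last assertion, $\sigma'_2=\sigma_{22}$ and $\sigma'_1|_R=\sigma_{11}$ by construction. An Ore extension is free as a right module over its base precisely when its twist is an automorphism; so if $\sigma_{22}$ is an automorphism of $R$ then $S$ is free on the right over $R$, and if in addition $\sigma_{11}$ is an automorphism then $\sigma'_1$ is an automorphism of $S$ (surjectivity uses $p_{12}\neq0$ to put $y_2$ in the image; injectivity follows because $\sigma'_1$ preserves degree in $y_2$), whence $T$ is free on the right over $R$ with basis $\{y_2^iy_1^j\}$; solving $ry_1$, $ry_2$ in terms of $y_1\sigma_{11}^{-1}(r)$, $y_2\sigma_{22}^{-1}(r)$ also gives $y_1R+y_2R+R=Ry_1+Ry_2+R$, so $B$ is a double extension. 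Conversely, if $B$ is a double extension, the right‑module freeness on $\{y_2^iy_1^j\}$ together with that last equality forces $\sigma_{22}$ and then $\sigma_{11}=\sigma'_1|_R$ to be bijective — one may also invoke the known fact from \cite{ZhangZhang2008} that $\sigma_{11}$ and $\sigma_{22}$ are automorphisms of $R$ whenever $B$ is a double extension. I expect the main obstacle to be the well‑definedness check in the ``if'' direction: the formulas for $\sigma'_1$ and $d'_1$ are forced on the generators $R\cup\{y_2\}$ of $S$, but verifying that they respect the relation of $S$ requires carefully unwinding the relevant cases of \eqref{Carvalhoetal2011(1.III)}--\eqref{Carvalhoetal2011(1.VIII)}; a secondary source of friction is keeping track of the two distinct $R$-bases $\{y_1^iy_2^j\}$ and $\{y_2^iy_1^j\}$ and of the left/right module structures when moving statements across the isomorphism $T\cong B$.
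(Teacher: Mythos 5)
This statement is quoted in the paper's preliminaries from \cite[Theorem 2.4]{Carvalhoetal2011} and is not proved there, so there is no in-paper argument to compare against; your proposal has to be judged against the source. On that basis it is a correct outline of the standard argument, and it is essentially the proof of the cited theorem: you mirror Proposition \ref{Theorem 2.2} with the roles of $y_1$ and $y_2$ exchanged, read off $\sigma_{21}=0$ (and $\sigma'_2=\sigma_{22}$, $d'_2=\delta_2$) from $y_2r\in Ry_2+R$, and correctly locate where the extra hypotheses enter --- comparing \eqref{Carvalhoetal2011(1.I)} against the decomposition $B=\bigoplus_i Sy_1^i$ kills the $y_1^2$-term (forcing $p_{11}=0$) and yields $y_2=p_{12}\sigma'_1(y_2)+\tau_1$ (forcing $p_{12}\neq 0$). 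The converse direction is also structured as in the source: build $S=R[y_2;\sigma_{22},\delta_2]$, define $\sigma'_1$ and $d'_1$ on generators by the forced formulas, verify compatibility with $y_2r=\sigma_{22}(r)y_2+\delta_2(r)$ via the relations of Proposition \ref{Carvalhoetal2011Proposition1.5} specialized to $\sigma_{21}=0$, $p_{11}=0$, and conclude by the uniqueness of Remark \ref{RemarkCarvalho}(2). The one substantive item you flag but do not carry out --- the well-definedness of $\sigma'_1$ and $d'_1$ on all of $S$ --- is indeed the technical core, but your reduction of it to the identities \eqref{Carvalhoetal2011(1.III)}--\eqref{Carvalhoetal2011(1.VIII)} (or to Proposition \ref{Carvalhoetal2011Proposition1.4}) is the right move, so I see no gap in the approach, only an unexecuted computation.
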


Let us see an example of a double Ore extension that can be expressed as a two-step iterated Ore extension.

\begin{example}[{\cite[Subcase 4.1.1]{ZhangZhang2009}}]
    Let $B = (\Bbbk_Q[x_1,x_2])_P[y_1, y_2; \sigma, \delta, \tau]$ be the right double Ore extension generated by $x_1, x_2, y_1, y_2$ subjected to the relations 
\begin{align*}
    x_2x_1 & = x_1x_2 + x_1^2, & y_2y_1 & = y_1y_2 + y_1^2,\\
    y_1x_1 & = fx_1y_1, & y_1x_2 & = gx_1y_1 + fx_2y_1,\\
    y_2x_1 & = hx_1y_1 + fx_1y_2, & y_2x_2 & = mx_1y_1 + hx_2y_1 + gx_1y_2 + fx_2y_2,
\end{align*}

where $f, g, h, m \in \Bbbk$ and $f \neq 0$. Note that in the relations
\[
    y_1x_1 = \sigma_{11}(x_1)y_1 + \sigma_{12}(x_1)y_2,\quad {\rm and} \quad y_1x_2 = \sigma_{11}(x_2)y_1 + \sigma_{12}(x_2)y_2,
\]

we have $\sigma_{12}(-) = 0$. Hence, in this case, $B = R_P[y_1, y_2; \sigma, \delta, \tau]$ is a right double extension of $R$ with $\sigma_{12} = 0$. By Proposition \ref{Theorem 2.2}(ii), $B$ can be presented as the two-step iterated Ore extension $R[y_1;\sigma_1, d_1][y_2; \sigma_2, d_2]$.   
\end{example}

\section{Homomorphisms between double Ore extensiones}\label{subsectiondouble}

In this section, we present a first approach toward a theory of homomorphisms and cv-polynomials between right double Ore extensions. Consider the notation in Definition \ref{DoubleOreDefinition}.

\begin{definition}\label{DefinitiononeDOE}
Let $B' = R_{P'}[y'_1, y'_2; \sigma', \delta', \tau']$ and $B = R_P [y_1, y_2; \sigma, \delta, \tau ]$ be right double extensions of $R$. Consider the map $\phi: B' \to B$ given by
\[
\phi\left(\sum_{i,j} a_{i,j}y^{'n(i)}_{1}y^{'m(j)}_{2}\right) = \sum_{i,j} a_{i,j}q_1^{n(i)}q_2^{m(j)}, \quad \text{with} \ q_1, q_2 \in B.
\]

If we want $\phi$ to be an $R$-homomorphism, it must satisfies 
\[
\phi(y_2'y_1') = \phi(y_2')\phi(y_1') = q_2q_1,
\]

and given the relation (\ref{Carvalhoetal2011(1.I)}), necessarily 
\begin{align*}
   \phi(y'_2y'_1) &= \phi(p'_{12}y'_1y'_2 + p'_{11}y_1^{'2} + \tau'_1y'_1 + \tau'_2y'_2 + \tau'_0)\\
   & = p'_{12}q_1q_2 + p'_{11}q_1^2 + \tau'_1q_1 + \tau'_2q_2 + \tau'_0.
\end{align*}

In this way, the relation
\begin{equation}\label{conditionhomo1}
    q_2q_1 = p'_{12}q_1q_2 + p'_{11}q_1^2 + \tau'_1q_1+ \tau'_2q_2 + \tau'_0
\end{equation}

must be satisfied. These facts motivate the following definition. Let $B^{\oplus2} := \begin{bmatrix} B \\ B \end{bmatrix}$.
\end{definition}

\begin{definition}\label{definitionhomo}
Let $B' = R_{P'}[y'_1, y'_2; \sigma', \delta', \tau']$ and $B = R_P [y_1, y_2; \sigma, \delta, \tau ]$ be right double extensions of $R$. We define the map $\varphi: B^{'\oplus2} \to B^{\oplus2}$, a homomorphism in terms of $\phi$ (Definition \ref{DefinitiononeDOE}), as 
\[
\varphi\begin{bmatrix} y'_1\\ y'_2 \end{bmatrix}:=  \begin{bmatrix} \phi(y'_1)\\ \phi(y'_2) \end{bmatrix} = \begin{bmatrix} q_1\\ q_2 \end{bmatrix},
\]

such that condition
\[
\varphi\left(\begin{bmatrix} y'_1\\ y'_2 \end{bmatrix}r\right) = \varphi\begin{bmatrix} y'_1\\ y'_2 \end{bmatrix}\varphi(r) =     \begin{bmatrix} \phi(y'_1)\\ \phi(y'_2) \end{bmatrix} r = \begin{bmatrix} q_1\\ q_2 \end{bmatrix}r,\quad r\in R,
\]
holds.

On the other hand, since
$$
\varphi\left(\begin{bmatrix} y'_1\\ y'_2 \end{bmatrix}r\right) = \varphi \left( \sigma'(r)\begin{bmatrix} y'_1\\ y'_2 \end{bmatrix} + \delta'(r) \right) = \sigma'(r)\begin{bmatrix} \phi(y'_1)\\ \phi(y'_2) \end{bmatrix} + \delta'(r) = \sigma'(r)\begin{bmatrix} q_1\\ q_2 \end{bmatrix} + \delta'(r).
$$

it is necessary that for any $r \in R$,
\begin{equation}\label{dcv-polirelation}
     \begin{bmatrix} q_1\\ q_2 \end{bmatrix} r = \sigma'(r)\begin{bmatrix} q_1\\ q_2 \end{bmatrix} + \delta'(r).
\end{equation}

Equivalently,  
\[
q_1r = \sigma'_{11}(r)q_1 + \sigma'_{12}(r)q_2 + \delta'_1(r)\quad {\rm and}\quad q_2r = \sigma'_{21}(r)q_1 + \sigma'_{22}(r)q_2 + \delta'_2(r),
\]

which allows us to say that the polynomials $q_i$'s $(i = 1, 2)$ are the {\em cv-polynomials respect to} $(\sigma', \delta')$.
\end{definition}

Due to the similarities between Ore extensions and double Ore extensions, the terminology in the following definition is motivated by the theory of cv-polynomials between Ore extensions presented in \cite{LamLeroy1992, RamirezReyes2024a}.

\begin{definition}\label{definitionhomooredoble}
Let $B' = R_{P'}[y'_1, y'_2; \sigma', \delta', \tau']$ and $B = R_P [y_1, y_2; \sigma, \delta, \tau ]$ be right double extensions of $R$. A matrix $ \begin{bmatrix} q_1\\ q_2 \end{bmatrix} \in B^{\oplus 2}$ satisfying $ \begin{bmatrix} q_1\\ q_2 \end{bmatrix} r = \sigma'(r)\begin{bmatrix} q_1\\ q_2 \end{bmatrix} + \delta'(r)$, with $q_1$ and $q_2$ subject to the relation (\ref{conditionhomo1}), is called a {\em double-change-of-variable matrix} (or {\it dcv-matrix}) respect to $(\sigma', \delta')$.
\end{definition}

Recall that for a ring $R$ and $a \in R$, the rule $\delta_a(r) = ar - ra$ defines a derivation $\delta_a$ on $R$, called the {\it inner derivation by} $a$. Any derivation on $R$ which is not inner derivation is called an {\it outer derivation}. More generally, if $\sigma$ is an endomorphism of $R$, then it can be seen that $\delta_{a,\sigma}(r) = ar - \sigma(r)a$ is also a derivation on $R$. 
%For $u \in R^*$, $\sigma_u$ denotes the {\it inner automorphism} of $R$ associated with $u$, defined by $\sigma_u(r) = uru^{-1}$, for all $r \in R$.

\begin{example}\label{examplegatito}
Let $B' = R_{P'}[y'_1, y'_2; \sigma', \delta', \tau']$ and $B = R_P [y_1, y_2; \sigma, \delta, \tau ]$ be right double  extensions of $R$. Consider $\varphi: B^{'\oplus 2} \to B^{\oplus 2}$ given by
    \[
\varphi\begin{bmatrix} y'_1\\ y'_2 \end{bmatrix}:= \begin{bmatrix} \phi(y'_1)\\ \phi(y'_2) \end{bmatrix} = \begin{bmatrix} q_1\\ q_2 \end{bmatrix} = \begin{bmatrix} a_2y_ 1^2+ a_0\\ c \end{bmatrix},
\]

with $a_2 \in R^*$ and $a_0, c \in R$. Let us find the conditions that guarantee that polynomials $q_1$ and $q_2$ satisfy relation (\ref{conditionhomo1}). Since
\[
\phi(y'_2y'_1) = q_2q_1 = c(a_2y_1^2 + a_0) = ca_2y_1^2 + ca_0,
\]

and
\begin{align*}
    \phi(p'_{12}y_1'y_2' + p'_{11}y_1^{'2} + \tau'_1y_1'+ \tau'_2y'_2 + \tau'_0) &= p'_{12}(a_2y_1^2 + a_0)c + p'_{11}(a_2y_1^2 + a_0)^2\\
    &\ \  + \tau'_1(a_2y_1^2 + a_0)+ \tau'_2c + \tau'_0,
\end{align*}

let $p'_{12} = p'_{11} = \tau'_2 = \tau'_0 = 0, \tau'_1 = c$. Then $\phi(y'_2y'_1) = \phi(p'_{12}y'_1y'_2 + p'_{11}y_1^{'2} + \tau'_1y'_1+ \tau'_2y'_2 + \tau'_0)$ as desired. 

Let us see that condition (\ref{dcv-polirelation}) holds. Consider
\begin{align*}
    \sigma'(r) = &\ \begin{bmatrix} \sigma'_{11} (r) = a_2\sigma_{11}^2(r)a_2^{-1} & 0 \\ 0 & \sigma'_{22}(r) \end{bmatrix}, \quad {\rm and}\\
    \delta'(r) = &\ \begin{bmatrix}  \delta'_1(r) = (a_2\delta_1^2 + \delta_{1(a_0,\sigma'_{11})})(r)\\ \delta'_2(r) = (\delta_{2(c,\sigma'_{22})})(r) \end{bmatrix}.
\end{align*}

Then
\begin{align*}
    \begin{bmatrix} q_1\\ q_2 \end{bmatrix} r &= \begin{bmatrix} a_2y_1^2 + a_0 \\ c \end{bmatrix} r = \begin{bmatrix} a_2y_1^2r + a_0r\\ cr \end{bmatrix} = \begin{bmatrix} a_2y_1^2r \\ 0 \end{bmatrix}  + \begin{bmatrix} a_0r \\ cr \end{bmatrix}\\
     &= \begin{bmatrix} a_2y_1[\sigma_{11}(r)y_1 + \sigma_{12}(r)y_2 + \delta_1(r)] \\ 0 \end{bmatrix}  + \begin{bmatrix} a_0r \\ cr \end{bmatrix}\\
     &= \begin{bmatrix} a_2y_1\sigma_{11}(r)y_1 + a_2y_1\sigma_{12}(r)y_2 + a_2y_1\delta_1(r) \\ 0 \end{bmatrix}  + \begin{bmatrix} a_0r \\ cr \end{bmatrix}\\
     &= \begin{bmatrix} a_2[\sigma_{11}^2(r)y_1^2 + \sigma_{12}(\sigma_{11}(r))y_2y_1 + \delta_1(\sigma_{11}(r))y_1]\\
     + a_2[\sigma_{11}(\sigma_{12}(r))y_1y_2 + \sigma_{12}^2(r)y_2^2 + \delta_1(\sigma_{12}(r))y_2]\\
     + a_2[\sigma_{11}(\delta_1(r))y_1 + \sigma_{12}(\delta_1(r))y_2 + \delta_1^2(r)] \\ 0 \end{bmatrix} + \begin{bmatrix} a_0r \\ cr \end{bmatrix}.\\
\end{align*}

Besides, if $\sigma_{12} = \sigma_{21} = 0$ and $\sigma_{11}\delta_1 = - \delta_1\sigma_{11}$,
\begin{align*}
    \begin{bmatrix} q_1\\ q_2 \end{bmatrix} r &= \begin{bmatrix} a_2\sigma_{11}^2(r)y_1^2  + a_2\delta_1(\sigma_{11}(r))y_1\\
     + a_2\sigma_{11}(\delta_1(r))y_1 + \delta_1^2(r) \\ 0 \end{bmatrix} + \begin{bmatrix} a_0r \\ cr \end{bmatrix}\\
     &= \begin{bmatrix} a_2\sigma_{11}^2(r)y_1^2  + \delta_1^2(r) \\ 0 \end{bmatrix} + \begin{bmatrix} a_0r \\ cr \end{bmatrix},
\end{align*}

and
{\small{\begin{align*}
    \sigma'(r)\begin{bmatrix} q_1\\ q_2 \end{bmatrix} + \delta'(r) &= \sigma'(r)\begin{bmatrix} a_2y_1^2 +a_0\\ c \end{bmatrix} + \delta'(r)\\
    &= \begin{bmatrix} \sigma'_{11}(r) & \sigma'_{12}(r) \\ \sigma'_{21}(r) & \sigma'_{22}(r) \end{bmatrix} \begin{bmatrix} a_2y_1^2 +a_0\\ c \end{bmatrix} +\begin{bmatrix} \delta'_1(r)\\ \delta'_2(r)\end{bmatrix}\\
    &= \begin{bmatrix} a_2\sigma_{11}^2(r)a_2^{-1} & 0 \\ 0 & \sigma'_{22}(r) \end{bmatrix} \begin{bmatrix} a_2y_1^2 +a_0\\ c \end{bmatrix} + \begin{bmatrix}  (a_2\delta_1^2 + \delta_{1(a_0,\sigma'_{11})})(r)\\  (\delta_{2(c,\sigma'_{22})})(r) \end{bmatrix}\\
    &= \begin{bmatrix} a_2\sigma_{11}^2(r)a_2^{-1}a_2y_1^2 + a_2\sigma_{11}^2(r)a_2^{-1}a_0\\ \sigma'_{22}(r)c \end{bmatrix} + \begin{bmatrix}  a_2\delta_1^2(r) + a_0r -a_2\sigma_{11}^2(r)a_2^{-1}a_0\\  cr -  \sigma'_{22}(r)c\end{bmatrix}\\
     &= \begin{bmatrix} a_2\sigma_{11}^2(r)y_1^2  + \delta_1^2(r) \\ 0 \end{bmatrix} + \begin{bmatrix} a_0r \\ cr \end{bmatrix},
\end{align*}
}}

we get  $\begin{bmatrix} q_1\\ q_2 \end{bmatrix} r = \sigma'(r)\begin{bmatrix} q_1\\ q_2 \end{bmatrix} + \delta'(r)$, i.e., $\begin{bmatrix} q_1\\ q_2 \end{bmatrix} = \begin{bmatrix} a_2y_1^2 +a_0\\ c \end{bmatrix}$ is a dcv-matrix respect to $(\sigma', \delta')$.
\end{example}

\begin{example}\label{tablefirstdoubleOre}
Table \ref{firsttableDO} presents some possibilities of polynomials by considering $q_2$ as a constant. We formulate conditions to obtain dcv-matrices.
\end{example}

\begin{example}\label{ironmanone}
Let $B' = R_{P'}[y'_1, y'_2; \sigma', \delta', \tau']$ and $B = R_P [y_1, y_2; \sigma, \delta, \tau ]$ be right double  extensions of $R$. Let $\varphi: B^{'\oplus 2} \to B^{\oplus 2}$ given by 
\[
\varphi\begin{bmatrix} y'_1\\ y'_2 \end{bmatrix}:=  \begin{bmatrix} \phi(y'_1)\\ \phi(y'_2) \end{bmatrix} = \begin{bmatrix} q_1\\ q_2 \end{bmatrix} = \begin{bmatrix} ay_1 +c\\ by_2 +c \end{bmatrix},
\] 

with $a, b \in R^{*}, \ c\in R$, and $a, b$ belonging to the center of $R$. Consider the conditions $p'_{12} = 0, \ p'_{11} = ba^{-1}, \ p_{12} = 1, \sigma_{11} = \sigma_{21},\ \sigma_{12} = \sigma_{22},\ \delta_{1} = \delta_{2},\ \tau'_1 = c - bca^{-1}, \tau'_2 = 0, \tau'_0 = 0$.

Note that
\begin{align*}
\phi(y'_2y'_1) & = q_2q_1 = (by_2 + c)(ay_1 + c) = by_2ay_1 + by_2c + cay_1 + c^2\\
& = b[\sigma_{21}(a)y_1 + \sigma_{22}(a)y_2 + \delta_2(a)]y_1 \\
&\ \ + b[\sigma_{21}(c)y_1 + \sigma_{22}(c)y_2 + \delta_2(c)] + cay_1 + c^2\\
& = b\sigma_{21}(a)y_1^2 + b\sigma_{22}(a)y_2y_1 + b\delta_2(a)y_1 + b\sigma_{21}(c)y_1 \\
&\ \ + b\sigma_{22}(c)y_2 + b\delta_2(c) + cay_1 + c^2\\
& = b\sigma_{21}(a)y_1^2 + b\sigma_{22}(a)[p_{12}y_1y_2 + p_{11}y_1^2 + \tau_1y_1 + \tau_2y_2 + \tau_0] \\
&\ \ + b\delta_2(a)y_1 + b\sigma_{21}(c)y_1 + b\sigma_{22}(c)y_2 + b\delta_2(c) + cay_1 + c^2, \\
& = p_{12}b\sigma_{22}(a)y_1y_2 + [b\sigma_{21}(a) + p_{11}b\sigma_{22}(a)]y_1^2 \\
& \ \ + [\tau_1b\sigma_{22}(a) + b\delta_2(a) + b\sigma_{21}(c) + ca]y_1\\
&\ \ + [\tau_2b\sigma_{22}(a) + b\sigma_{22}(c)]y_2 + [\tau_0b\sigma_{22}(a) + b\delta_2(c) + c^2], 
\end{align*}

\begin{landscape}
{\footnotesize{ 
\begin{longtable}{|c|c|c|}
\hline
\multicolumn{3}{ |c| }{ $\boldsymbol{q_2 = c}$ }\\
\hline 
\textbf{cv-polynomial} &  $(\sigma', \delta')$ & \textbf{Conditions}\\
\hline $\begin{array}{lr}  q_1 = d \ \end{array}$
& $\begin{bmatrix}\sigma'_{11} & \sigma'_{12} = 0 \\ \sigma'_{21} = 0 & \sigma'_{22}\end{bmatrix}$, \quad $\begin{bmatrix}
\delta'_1 = \delta_{1(d,\sigma'_{11})} \\
\delta'_2 = \delta_{2(c, \sigma'_{22})}
\end{bmatrix}$& $\begin{cases}
p'_{12} = cdc^{-1}d^{-1},\\
p'_{11} = \tau'_1 = \tau'_2 = \tau'_0 = 0
\end{cases}$\\
\hline $\begin{array}{lr}  q_1 = a_1y_1 + a_0 \ \end{array}$
& $ \begin{bmatrix} \sigma'_{11} = a_1\sigma_{11}a_1^{-1} &
\sigma'_{12} = 0\\
\sigma'_{21} = 0 & \sigma'_{22}\end{bmatrix}$, \quad $\begin{bmatrix}
\delta'_1 = a_1\delta_1 + \delta_{1(a_0,\sigma'_{11})} \\
\delta'_2 = \delta_{2(c, \sigma'_{22})}
\end{bmatrix}$& $\begin{cases}
p'_{12} = p'_{11} = \tau'_2 = \tau'_0 = 0 ,\\
\tau'_1 = c\\
\sigma_{12} = 0
\end{cases}$\\
\hline $\begin{array}{lr}  q_1 = a_2y_1^2 + a_0 \ \end{array}$
& $ \begin{bmatrix} \sigma'_{11} = a_2\sigma_{11}^2a_2^{-1} & 
\sigma'_{12} = 0\\
\sigma'_{21} = 0 & \sigma'_{22} \end{bmatrix}$, \quad $\begin{bmatrix}
\delta'_1 = a_2\delta_1^2 + \delta_{1(a_0,\sigma'_{11})} \\
\delta'_2 = \delta_{2(c, \sigma'_{22})}
\end{bmatrix}$& $\begin{cases}
p'_{12} = p'_{11} = \tau'_2 = \tau'_0 = 0 ,\\
\tau'_1 = c\\
\sigma_{11}\delta_1 = -\delta_1\sigma_{11}\\
\end{cases}$\\
\hline $\begin{array}{lr}  q_1 = a_ny_1^n + a_0 \ \end{array}$
& $ \begin{bmatrix} \sigma'_{11} = a_n\sigma_{11}^na_n^{-1} & \sigma'_{12} = 0\\
\sigma'_{21}= 0 & \sigma'_{22} \end{bmatrix}$, \quad $\begin{bmatrix}
\delta'_1 = a_n\delta_1^n + \delta_{1(a_0,\sigma'_{11})} \\
\delta'_2 = \delta_{2(c, \sigma'_{22})}
\end{bmatrix}$& $\begin{cases}
p'_{12} = p'_{11} = \tau'_2 = \tau'_0 = 0 ,\\
\tau'_1 = c\\
\sigma_{11}\delta_1 = -\delta_1\sigma_{11}\\
\end{cases}$\\
\hline $\begin{array}{lr}  q_1 =f(y_1)y_1 + a_0, \\
\ \ f(y_1) = a_2y_1 + a_1 \end{array}$
& $\begin{bmatrix} \sigma'_{11}  & \sigma'_{12} = 0\\
\sigma'_{21}= 0 & \sigma'_{22} \end{bmatrix}$,  \quad $\begin{bmatrix}
\delta'_1 = a_2\delta_1^2 + a_1\delta_1 + \delta_{1(a_0,\sigma'_{11})} \\
\delta'_2 = \delta_{2(c, \sigma'_{22})}
\end{bmatrix}$ & $\begin{cases}
p'_{12} = p'_{11} = \tau'_2 = \tau'_0 = 0 ,\\
\tau'_1 = c\\
\sigma_{11}\delta_1 = -\delta_1\sigma_{11}\\
f(y_1)\sigma_{11}(-) = \sigma'_{11}(-)f(y_1)\\
\end{cases}$\\
\hline $\begin{array}{lr}  q_1 =f(y_1)y_1 + b, \\
\ \ f(y_1) = \sum_{i=0}^{n}a_iy_1^i \end{array}$
& $\begin{bmatrix} \sigma'_{11}  & \sigma'_{12} = 0\\
\sigma'_{21}= 0 & \sigma'_{22} \end{bmatrix}$, \quad $\begin{bmatrix}
\delta'_1 = \sum_{i=1}^{n}a_i\delta_1^i + \delta_{1(b,\sigma'_{11})} \\
\delta'_2 = \delta_{2(c, \sigma'_{22})}
\end{bmatrix}$& $\begin{cases}
p'_{12} = p'_{11} = \tau'_2 = \tau'_0 = 0 ,\\
\tau'_1 = c\\
\sigma_{11}\delta_1 = -\delta_1\sigma_{11}\\
f(y_1)\sigma_{11}(-) = \sigma'_{11}(-)f(y_1)\\
\end{cases}$\\
\hline $\begin{array}{lr}  q_1 = \sum_{i=0}^{n}a_iy_1^i \end{array}$
& $\begin{bmatrix} \sigma'_{11}  & \sigma'_{12} = 0\\
\sigma'_{21}= 0 & \sigma'_{22} \end{bmatrix}$, \quad $\begin{bmatrix}
\delta'_1 = \sum_{i=1}^{n}a_i\delta_1^i + \delta_{1(a_0,\sigma'_{11})} \\
\delta'_2 = \delta_{2(c, \sigma'_{22})}
\end{bmatrix}$& $\begin{cases}
p'_{12} = p'_{11} = \tau'_2 = \tau'_0 = 0 ,\\
\tau'_1 = c\\
\sigma_{11}\delta_1 = -\delta_1\sigma_{11}\\
a_i\sigma^i_{11}(-) = \sigma'_{11}(-)a_i\\
\end{cases}$\\
\hline $\begin{array}{lr}  q_1 = a_1y_1y_2 + a_0 \ \end{array}$
& $\begin{bmatrix}\sigma'_{11} = a_1\sigma_{11}\sigma_{22}a_1^{-1} & \sigma'_{12} = 0 \\ \sigma'_{21} = 0 & \sigma'_{22}\end{bmatrix}$, \quad $\begin{bmatrix}
\delta'_1 = \delta_{1(a_0,\sigma'_{11})} \\
\delta'_2 = \delta_1\delta_2 + \delta_{(c, \sigma'_{22})}
\end{bmatrix}$& $\begin{cases}
p'_{12} = p'_{11} = \tau'_2 = \tau'_0 = 0 ,\\
\tau'_1 = c\\
\end{cases}$\\
\hline
\caption{dcv-matrices}
\label{firsttableDO}
\end{longtable}}}
\end{landscape}

and by the above conditions, 
\begin{align*}
  \phi(y'_2y'_1) = q_2q_1 &=  b\sigma_{12}(a)y_1y_2 + [b\sigma_{11}(a) + p_{11}b\sigma_{12}(a)]y_1^2 \\
&\ \ + [\tau_1b\sigma_{12}(a) + b\delta_1(a) + b\sigma_{11}(c) + ca]y_1\\
&\ \ +    [\tau_2b\sigma_{12}(a) + b\sigma_{12}(c)]y_2 + [\tau_0b\sigma_{12}(a) + b\delta_1(c) + c^2].
\end{align*}

On the other hand, 
\begin{align*}
    &\phi(p'_{12}y_1'y_2' + p'_{11}y_1^{'2} + \tau'_1y_1'+ \tau'_2y'_2 + \tau'_0)\\
    &= p'_{12}q_1q_2 + p'_{11}q_1^2 + \tau'_1q_1+ \tau'_2q_2 + \tau'_0\\
    &= p'_{12}(ay_1 + c)(by_2 + c) + p'_{11}(ay_1 + c)^2 + \tau'_1(ay_1 + c) + \tau'_2(by_2 + c) + \tau'_0\\
    &= p'_{12}ay_1by_2 + p'_{12}ay_1c + p'_{12}cby_2 + p'_{12}c^2 + p'_{11}ay_1ay_1 + p'_{11}ay_1c\\
    &\ \ + p'_{11}cay_1 + p'_{11}c^2 + \tau'_1ay_1 + \tau'_1c + \tau'_2by_2 + \tau'_2c + \tau'_0\\
    &= p'_{12}a\sigma_{11}(b)y_1y_2 + p'_{12}a\sigma_{12}(b)y_2^2 + p'_{12}a\delta_1(b)y_2 + p'_{12}a\sigma_{11}(c)y_1\\
    &\ \ + p'_{12}a\sigma_{12}(c)y_2 + p'_{12}a\delta_1(c) + p'_{12}cby_2 + p'_{12}c^2 + p'_{11}a\sigma_{11}(a)y_1^2\\
    &\ \ + p'_{11}a\sigma_{12}(a)[p_{12}y_1y_2 + p_{11}y_1^2 + \tau_1y_1 + \tau_2y_2 + \tau_0] + p'_{11}a\delta_1(a)y_1\\
    &\ \ + p'_{11}a\sigma_{11}(c)y_1 + p'_{11}a\sigma_{12}(c)y_2 + p'_{11}a\delta_1(c) + p'_{11}cay_1 + p'_{11}c^2\\
    &\ \ + \tau'_1ay_1 + \tau'_1c + \tau'_2by_2 + \tau'_2c + \tau'_0\\
    &= [p'_{12}a\sigma_{11}(b) + p'_{11}a\sigma_{12}(a)]y_1y_2 + [p'_{11}a\sigma_{11}(a) + p'_{11}p_{11}a\sigma_{12}(a)]y_1^2\\
    &\ \ + [p'_{12}a\sigma_{11}(c) + p'_{11}a\sigma_{12}(a)\tau_1 + p'_{11}a\delta_1(a) + p'_{11}a\sigma_{11}(c) + p'_{11}ca + \tau'_1a]y_1\\
    &\ \ + [p'_{12}a\delta_1(b) + p'_{12}a\sigma_{12}(c) + p'_{12}cb + p'_{11}a\sigma_{12}(a)\tau_2 + p'_{11}a\sigma_{12}(c) + \tau'_2b]y_2\\
    &\ \ + p'_{12}a\sigma_{12}(b)y^2_2 + [p'_{12}a\delta_1(c) + p'_{12}c^2 + p'_{11}a\sigma_{12}(a)\tau_0 + p'_{11}a\delta_1(c)\\
    &\ \ + p'_{11}c^2 + \tau'_1c + \tau'_2c + \tau'_0].
\end{align*}

Again, by the conditions described above,
\begin{align*}
    & \phi(p'_{12}y_1'y_2' + p'_{11}y_1^{'2} + \tau'_1y_1'+ \tau'_2y'_2 + \tau'_0)\\
    & = [ba^{-1}a\sigma_{12}(a)]y_1y_2 + [ba^{-1}a\sigma_{11}(a) + ba^{-1}p_{11}a\sigma_{12}(a)]y_1^2\\
    &\ \ + [ ba^{-1}a\sigma_{12}(a)\tau_1 + ba^{-1}a\delta_1(a) + ba^{-1}a\sigma_{11}(c) + ba^{-1}ca + (c - bca^{-1})a]y_1\\
    &\ \ + [ ba^{-1}a\sigma_{12}(a)\tau_2 + ba^{-1}a\sigma_{12}(c)]y_2 \\
    &\ \ + [ba^{-1}a\sigma_{12}(a)\tau_0 + ba^{-1}a\delta_1(c) + ba^{-1}c^2 + (c - bca^{-1})c ]\\
    &= b\sigma_{12}(a)y_1y_2 + [b\sigma_{11}(a) + p_{11}b\sigma_{12}(a)]y_1^2 + [\tau_1b\sigma_{12}(a) + b\delta_1(a) + b\sigma_{11}(c) + ca]y_1\\
    &\ \ + [ \tau_2b\sigma_{12}(a) + b\sigma_{12}(c)]y_2 + [\tau_0b\sigma_{12}(a) + ba\delta_1(c)+ c^2].
\end{align*}

These equalities show that $\phi(y'_2y'_1) = \phi(p'_{12}y'_1y'_2 + p'_{11}y_1^{'2} + \tau'_1y'_1+ \tau'_2y'_2 + \tau'_0)$, i.e., the polynomials $q_1$ and $q_2$ satisfy (\ref{conditionhomo1}).

With the aim of showing that condition (\ref{dcv-polirelation}) holds, consider 
\begin{center}
    $\sigma'(r)=\begin{bmatrix} \sigma_a(\sigma_{11}(r)) & \sigma_b(\sigma_{12}(r)) \\ \sigma_a( \sigma_{21}(r)) & \sigma_b(\sigma_{22}(r)) \end{bmatrix}
    \ \text{and}\ \delta'(r)= \begin{bmatrix} (a\delta_1 + \delta_{c,\sigma'_{1}})(r)\\ (b\delta_2 + \delta_{c,\sigma'_{2}})(r) \end{bmatrix}$,
\end{center}

where
\[
\delta_{c,\sigma'_{i}}(r) = \begin{cases}
    cr - [a\sigma_{11}(r)a^{-1}c + b\sigma_{12}(r)b^{-1}c], \quad \text{if}\ \ i=1\\
    cr - [a\sigma_{21}(r)a^{-1}c + b\sigma_{22}(r)b^{-1}c], \quad \text{if}\ \ i=2.
\end{cases}
\]

Since
\begin{align*}
    \begin{bmatrix} q_1\\ q_2 \end{bmatrix} r &= \begin{bmatrix} q_1r\\ q_2r \end{bmatrix} = \begin{bmatrix} (ay_1 + c)r\\ (by_2+c)r \end{bmatrix} = \begin{bmatrix} ay_1r \\ by_2r \end{bmatrix} +  \begin{bmatrix} cr \\ cr \end{bmatrix} \\
&= \begin{bmatrix} a[\sigma_{11}(r)y_1 + \sigma_{12}(r)y_2 + \delta_1(r)] \\ b[\sigma_{21}(r)y_1 + \sigma_{22}(r)y_2 + \delta_2(r)] \end{bmatrix} +  \begin{bmatrix} cr \\ cr \end{bmatrix} \\
    &= \begin{bmatrix} \sigma_{11}(r) & \sigma_{12}(r) \\ \sigma_{21}(r) & \sigma_{22}(r) \end{bmatrix}\begin{bmatrix} ay_1\\ by_2 \end{bmatrix}  + \begin{bmatrix} a\delta_1(r)\\ b\delta_2(r) \end{bmatrix} + \begin{bmatrix} cr\\ cr\end{bmatrix},
\end{align*}

and, 
\begin{align*}
    \sigma'(r)\begin{bmatrix} q_1\\ q_2 \end{bmatrix} + \delta'(r) &= \sigma'(r)\begin{bmatrix} ay_1 +c\\ by_2 +c \end{bmatrix} + \delta'(r) = \sigma'(r)\begin{bmatrix} ay_1\\ by_2 \end{bmatrix} + \sigma'(r)\begin{bmatrix} c\\ c \end{bmatrix} + \delta'(r)\\
    &= \begin{bmatrix} \sigma_a(\sigma_{11}(r)) & \sigma_b(\sigma_{12}(r)) \\ \sigma_a( \sigma_{21}(r)) & \sigma_b(\sigma_{22}(r)) \end{bmatrix}\begin{bmatrix} ay_1\\ by_2 \end{bmatrix} + \begin{bmatrix} \sigma_a(\sigma_{11}(r)) & \sigma_b(\sigma_{12}(r)) \\ \sigma_a( \sigma_{21}(r)) & \sigma_b(\sigma_{22}(r)) \end{bmatrix} \begin{bmatrix} c\\ c \end{bmatrix}\\ 
    &\ \ + \begin{bmatrix} (a\delta_1 + \delta_{c,\sigma'_{1}})(r)\\ (b\delta_2 + \delta_{c,\sigma'_{2}})(r) \end{bmatrix}\\
    &= \begin{bmatrix} a\sigma_{11}(r)a^{-1}ay_1 + b\sigma_{12}(r)b^{-1}by_2\\ a\sigma_{21}(r)a^{-1}ay_1 + b\sigma_{22}(r)b^{-1}by_2 \end{bmatrix} + \begin{bmatrix} a\sigma_{11}(r)a^{-1}c + b\sigma_{12}(r)b^{-1}c\\ a\sigma_{21}(r)a^{-1}c + b\sigma_{22}(r)b^{-1}c \end{bmatrix}\\
    &\ \ + \begin{bmatrix} a\delta_1(r) + cr - [a\sigma_{11}(r)a^{-1}c + b\sigma_{12}(r)b^{-1}c] \\ b\delta_2(r) + cr - [a\sigma_{21}(r)a^{-1}c + b\sigma_{22}(r)b^{-1}c] \end{bmatrix}\\
    &= \begin{bmatrix} a\sigma_{11}(r)y_1 + b\sigma_{12}(r)y_2\\ a\sigma_{21}(r)y_1 + b\sigma_{22}(r)y_2 \end{bmatrix} + \begin{bmatrix} a\delta_1(r) + cr  \\ b\delta_2(r) + cr  \end{bmatrix}\\
     &= \begin{bmatrix} \sigma_{11}(r) & \sigma_{12}(r) \\ \sigma_{21}(r) & \sigma_{22}(r) \end{bmatrix}\begin{bmatrix} ay_1\\ by_2 \end{bmatrix}  + \begin{bmatrix} a\delta_1(r)\\ b\delta_2(r) \end{bmatrix} + \begin{bmatrix} cr\\ cr\end{bmatrix},
\end{align*}

we obtain $\begin{bmatrix} q_1\\ q_2 \end{bmatrix} r = \sigma'(r)\begin{bmatrix} q_1\\ q_2 \end{bmatrix} + \delta'(r)$, and hence $\begin{bmatrix} q_1\\ q_2 \end{bmatrix} = \begin{bmatrix} ay_1 +c\\ by_2 +c \end{bmatrix}$ is a dcv-matrix respect to $(\sigma', \delta')$.
\end{example}

\begin{example}\label{ironmantwo}
    Let $B' = R_{P'}[y'_1, y'_2; \sigma', \delta', \tau']$ and $B = R_P [y_1, y_2; \sigma, \delta, \tau ]$ be right double  extensions of $R$. Let $\varphi: B^{'\oplus 2} \to B^{\oplus 2}$ given by
\[
\varphi\begin{bmatrix} y'_1\\ y'_2 \end{bmatrix}:=  \begin{bmatrix} \phi(y'_1)\\ \phi(y'_2) \end{bmatrix} = \begin{bmatrix} q_1\\ q_2 \end{bmatrix} = \begin{bmatrix} a_1y_1 + a_0\\ b_1y_2 +b_0 \end{bmatrix},
\] 

with $a_1, b_1 \in R^*$ and $a_0, b_0\in R$. Consider the conditions,
\begin{align*}
    p'_{12} &= 0; \quad  p'_{11} = b_1a_1^{-1},\quad \tau'_1 = b_0 - b_1a_1^{-1}a_0; \quad \tau'_2 = 0; \quad \tau'_0 = b_0a_0 - b_1a_1^{-1}a_0^2\\
    \sigma_{11} &= \sigma_{21}; \quad \sigma_{12} = \sigma_{22}; \quad  \delta_{1} = \delta_{2}.
\end{align*}

Note that 
\begin{align*}
    &\phi(p'_{12}y_1'y_2' + p'_{11}y_1^{'2} + \tau'_1y_1'+ \tau'_2y'_2 + \tau'_0)\\
    &= p'_{12}q_1q_2 + p'_{11}q_1^2 + \tau'_1q_1+ \tau'_2q_2 + \tau'_0\\
    &= p'_{12}(a_1y_1 + a_0)(b_1y_2 + b_0) + p'_{11}(a_1y_1 + a_0)^2 + \tau'_1(a_1y_1 + a_0) + \tau'_2(b_1y_2 + b_0) + \tau'_0\\
    &= p'_{12}[a_1y_1b_1y_2 + a_1y_1b_0 + a_0b_1y_2 + a_0b_0] + p'_{11}[a_1y_1a_1y_1 + a_1y_1a_0 + a_0a_1y_1 + a_0^2]\\
    &\ \ + \tau'_1a_1y_1 + \tau'_1a_0 + \tau'_2b_1y_2 + \tau'_2b_0 + \tau'_0\\
    %&= p'_{12}a_1y_1b_1y_2 + p'_{12}a_1y_1b_0 + p'_{12}a_0b_1y_2 + p'_{12}a_0b_0 \\
    %&\ \ + p'_{11}a_1y_1a_1y_1 + p'_{11}a_1y_1a_0 + p'_{11}a_0a_1y_1 + p'_{11}a_0^2\\
    %&\ \ + \tau'_1a_1y_1 + \tau'_1a_0 + \tau'_2b_1y_2 + \tau'_2b_0 + \tau'_0\\
    &= p'_{12}a_1[\sigma_{11}(b_1)y_1 + \sigma_{12}(b_1)y_2 + \delta_1(b_1)]y_2 + p'_{12}a_1[\sigma_{11}(b_0)y_1 + \sigma_{12}(b_0)y_2 + \delta_1(b_0)]\\
    &\ \ + p'_{12}a_0b_1y_2 + p'_{12}a_0b_0 + p'_{11}a_1[\sigma_{11}(a_1)y_1 + \sigma_{12}(a_1)y_2 + \delta_1(a_1)]y_1\\
    &\ \ + p'_{11}a_1[\sigma_{11}(a_0)y_1 + \sigma_{12}(a_0)y_2 + \delta_1(a_0)] + p'_{11}a_0a_1y_1 + p'_{11}a_0^2\\
    &\ \ + \tau'_1a_1y_1 + \tau'_1a_0 + \tau'_2b_1y_2 + \tau'_2b_0 + \tau'_0\\
    &= p'_{12}a_1\sigma_{11}(b_1)y_1y_2 + p'_{12}a_1\sigma_{12}(b_1)y^2_2 + p'_{12}a_1\delta_1(b_1)y_2 + p'_{12}a_1\sigma_{11}(b_0)y_1 \\
    &\ \ + p'_{12}a_1\sigma_{12}(b_0)y_2 + p'_{12}a_1\delta_1(b_0) + p'_{12}a_0b_1y_2 + p'_{12}a_0b_0 + p'_{11}a_1\sigma_{11}(a_1)y^2_1\\
    &\ \ + p'_{11}a_1\sigma_{12}(a_1)y_2y_1 + p'_{11}a_1\delta_1(a_1)y_1 + p'_{11}a_1\sigma_{11}(a_0)y_1 + p'_{11}a_1\sigma_{12}(a_0)y_2\\
    &\ \ + p'_{11}a_1\delta_1(a_0) + p'_{11}a_0a_1y_1 + p'_{11}a_0^2 + \tau'_1a_1y_1 + \tau'_1a_0 + \tau'_2b_1y_2 + \tau'_2b_0 + \tau'_0\\
    %&= p'_{12}a_1\sigma_{11}(b_1)y_1y_2 + p'_{12}a_1\sigma_{12}(b_1)y^2_2 + p'_{12}a_1\delta_1(b_1)y_2\\
    %&\ \ + p'_{12}a_1\sigma_{11}(b_0)y_1 + p'_{12}a_1\sigma_{12}(b_0)y_2 + p'_{12}a_1\delta_1(b_0)\\
    %&\ \ + p'_{12}a_0b_1y_2 + p'_{12}a_0b_0 + p'_{11}a_1\sigma_{11}(a_1)y^2_1\\
    %&\ \ + p'_{11}a_1\sigma_{12}(a_1)[p_{12}y_1y_2 + p_{11}y_1^2 + \tau_1y_1 + \tau_2y_2 + \tau_0] + p'_{11}a_1\delta_1(a_1)y_1\\
    %&\ \ + p'_{11}a_1\sigma_{11}(a_0)y_1 + p'_{11}a_1\sigma_{12}(a_0)y_2 + p'_{11}a_1\delta_1(a_0) + p'_{11}a_0a_1y_1 + p'_{11}a_0^2\\
    %&\ \ + \tau'_1a_1y_1 + \tau'_1a_0 + \tau'_2b_1y_2 + \tau'_2b_0 + \tau'_0\\
    &= p'_{12}a_1\sigma_{11}(b_1)y_1y_2 + p'_{12}a_1\sigma_{12}(b_1)y^2_2 + p'_{12}a_1\delta_1(b_1)y_2\\
    &\ \ + p'_{12}a_1\sigma_{11}(b_0)y_1 + p'_{12}a_1\sigma_{12}(b_0)y_2 + p'_{12}a_1\delta_1(b_0)\\
    &\ \ + p'_{12}a_0b_1y_2 + p'_{12}a_0b_0 + p'_{11}a_1\sigma_{11}(a_1)y^2_1\\
    &\ \ + p'_{11}a_1\sigma_{12}(a_1)p_{12}y_1y_2 + p'_{11}a_1\sigma_{12}(a_1)p_{11}y_1^2 + p'_{11}a_1\sigma_{12}(a_1)\tau_1y_1\\
    &\ \ + p'_{11}a_1\sigma_{12}(a_1)\tau_2y_2 + p'_{11}a_1\sigma_{12}(a_1)\tau_0 + p'_{11}a_1\delta_1(a_1)y_1\\
    &\ \ + p'_{11}a_1\sigma_{11}(a_0)y_1 + p'_{11}a_1\sigma_{12}(a_0)y_2 + p'_{11}a_1\delta_1(a_0) + p'_{11}a_0a_1y_1 + p'_{11}a_0^2\\
    &\ \ + \tau'_1a_1y_1 + \tau'_1a_0 + \tau'_2b_1y_2 + \tau'_2b_0 + \tau'_0\\
    &= [p'_{12}a_1\sigma_{11}(b_1)+ p'_{11}a_1\sigma_{12}(a_1)p_{12}]y_1y_2 + [p'_{11}a_1\sigma_{11}(a_1)+ p'_{11}a_1\sigma_{12}(a_1)p_{11}]y_1^2\\
    &\ \ + [p'_{12}a_1\sigma_{12}(b_1)]y_2^2\\
    &\ \ + [p'_{12}a_1\sigma_{11}(b_0) + p'_{11}a_1\sigma_{12}(a_1)\tau_1+ p'_{11}a_1\delta_1(a_1)+ p'_{11}a_1\sigma_{11}(a_0)+ p'_{11}a_0a_1+ \tau'_1a_1]y_1\\    
    &\ \ + [p'_{12}a_1\delta_1(b_1)+ p'_{12}a_1\sigma_{12}(b_0)+p'_{12}a_0b_1 + p'_{11}a_1\sigma_{12}(a_1)\tau_2 +p'_{11}a_1\sigma_{12}(a_0)+\tau'_2b_1]y_2\\
    &\ \ + [p'_{12}a_1\delta_1(b_0)+p'_{12}a_0b_0+p'_{11}a_1\sigma_{12}(a_1)\tau_0 + p'_{11}a_1\delta_1(a_0) +  p'_{11}a_0^2 + \tau'_1a_0\tau'_2b_0 + \tau'_0].
\end{align*}

On the other hand, 
\begin{align*}
\phi(y'_2y'_1) = q_2q_1 &= (b_1y_2 + b_0)(a_1y_1 + a_0) =   b_1y_2a_1y_1 + b_1y_2a_0 + b_0a_1y_1 + b_0a_0\\
&= b_1[\sigma_{21}(a_1)y_1 + \sigma_{22}(a_1)y_2 + \delta_2(a_1)]y_1 \\
&\ \ + b_1[\sigma_{21}(a_0)y_1 + \sigma_{22}(a_0)y_2 + \delta_2(a_0)] + b_0a_1y_1 + b_0a_0\\
&= b_1\sigma_{21}(a_1)y^2_1 + b_1\sigma_{22}(a_1)y_2y_1 + b_1\delta_2(a_1)y_1\\
&\ \ + b_1\sigma_{21}(a_0)y_1 + b_1\sigma_{22}(a_0)y_2 + b_1\delta_2(a_0) + b_0a_1y_1 + b_0a_0\\
&= b_1\sigma_{21}(a_1)y^2_1 + b_1\sigma_{22}(a_1)[p_{12}y_1y_2 + p_{11}y^2_1 + \tau_1y_1 + \tau_2y_2 + \tau_0] \\
&\ \  +b_1\delta_2(a_1)y_1 + b_1\sigma_{21}(a_0)y_1 + b_1\sigma_{22}(a_0)y_2 + b_1\delta_2(a_0) + b_0a_1y_1 + b_0a_0,
\end{align*}

or equivalently, 
\begin{align*}
q_2q_1 &= b_1\sigma_{21}(a_1)y^2_1 + b_1\sigma_{22}(a_1)p_{12}y_1y_2 + b_1\sigma_{22}(a_1)p_{11}y^2_1 + b_1\sigma_{22}(a_1)\tau_1y_1 \\
& \ \ + b_1\sigma_{22}(a_1)\tau_2y_2 + b_1\sigma_{22}(a_1)\tau_0 +b_1\delta_2(a_1)y_1 + b_1\sigma_{21}(a_0)y_1\\
&\ \ + b_1\sigma_{22}(a_0)y_2 + b_1\delta_2(a_0) + b_0a_1y_1 + b_0a_0\\
&= [b_1\sigma_{22}(a_1)p_{12}]y_1y_2 + [b_1\sigma_{21}(a_1)+ b_1\sigma_{22}(a_1)p_{11}]y_1^2\\
&\ \ + [b_1\sigma_{22}(a_1)\tau_1 + b_1\delta_2(a_1)+ b_1\sigma_{21}(a_0)+ b_0a_1]y_1\\
&\ \ + [b_1\sigma_{22}(a_1)\tau_2 + b_1\sigma_{22}(a_0)]y_2 + [b_1\sigma_{22}(a_1)\tau_0 + b_1\delta_2(a_0) + b_0a_0].
\end{align*}

Using the conditions described above, 
\begin{align*}
\phi(y'_2y'_1) &= [b_1\sigma_{22}(a_1)p_{12}]y_1y_2 + [b_1\sigma_{21}(a_1)+ b_1\sigma_{22}(a_1)p_{11}]y_1^2\\
&\ \ + [b_1\sigma_{22}(a_1)\tau_1 + b_1\delta_2(a_1)+ b_1\sigma_{21}(a_0)+ b_0a_1]y_1\\
&\ \ + [b_1\sigma_{22}(a_1)\tau_2 + b_1\sigma_{22}(a_0)]y_2 + [b_1\sigma_{22}(a_1)\tau_0 + b_1\delta_2(a_0) + b_0a_0]\\
&= [b_1\sigma_{12}(a_1)p_{12}]y_1y_2 + [b_1\sigma_{21}(a_1)+ b_1\sigma_{12}(a_1)p_{11}]y_1^2\\
&\ \ + [b_1\sigma_{12}(a_1)\tau_1 + b_1\delta_1(a_1)+ b_1\sigma_{21}(a_0)+ b_0a_1]y_1\\
&\ \ + [b_1\sigma_{12}(a_1)\tau_2 + b_1\sigma_{12}(a_0)]y_2 + [b_1\sigma_{12}(a_1)\tau_0 + b_1\delta_1(a_0) + b_0a_0], 
\end{align*}

while
\begin{align*}
    &\phi(p'_{12}y_1'y_2' + p'_{11}y_1^{'2} + \tau'_1y_1'+ \tau'_2y'_2 + \tau'_0)\\
    &= [b_1\sigma_{12}(a_1)p_{12}]y_1y_2 + [b_1\sigma_{21}(a_1)+ b_1\sigma_{12}(a_1)p_{11}]y_1^2\\
    &\ \ + [b_1\sigma_{12}(a_1)\tau_1+ b_1\delta_1(a_1)+ b_1\sigma_{21}(a_0)+ b_0a_1]y_1\\    
    &\ \ + [b_1\sigma_{12}(a_1)\tau_2 +b_1\sigma_{12}(a_0)+\tau'_2b_1]y_2
    + [b_1\sigma_{12}(a_1)\tau_0 + b_1\delta_1(a_0) +  b_0a_0].
\end{align*}

These facts guarantee that $\phi(y'_2y'_1) = \phi(p'_{12}y'_1y'_2 + p'_{11}y_1^{'2} + \tau'_1y'_1+ \tau'_2y'_2 + \tau'_0)$, i.e., the polynomials $q_1$ and $q_2$ satisfy relation (\ref{conditionhomo1}).\\

With the aim of showing that condition (\ref{dcv-polirelation}) holds, consider 
\begin{center}
    $\sigma'(r)=\begin{bmatrix} \sigma_{a_1}(\sigma_{11}(r)) & \sigma_{b_1}(\sigma_{12}(r)) \\ \sigma_{a_1}( \sigma_{21}(r)) & \sigma_{b_1}(\sigma_{22}(r)) \end{bmatrix}
    \ \text{and}\ \delta'(r)= \begin{bmatrix} (a_1\delta_1 + \delta_{a_0,\sigma'_{1}})(r)\\ (b_1\delta_2 + \delta_{b_0,\sigma'_{2}})(r) \end{bmatrix}$,
\end{center}
where
\[
\begin{cases}
    \delta_{a_0,\sigma'_{1}}(r) =  a_0r - [a_1\sigma_{11}(r)a_1^{-1}a_0 + b_1\sigma_{12}(r)b_1^{-1}b_0], \\
    \delta_{b_0,\sigma'_{2}}(r) =  b_0r - [a_1\sigma_{21}(r)a_1^{-1}a_0 + b_1\sigma_{22}(r)b_1^{-1}b_0].
\end{cases}
\]

Since
\begin{align*}
    \begin{bmatrix} q_1\\ q_2 \end{bmatrix} r &= \begin{bmatrix} q_1r\\ q_2r \end{bmatrix} = \begin{bmatrix} (a_1y_1 + a_0)r\\ (b_1y_2+b_0)r \end{bmatrix} = \begin{bmatrix} a_1y_1r \\ b_1y_2r \end{bmatrix} + \begin{bmatrix} a_0r \\ b_0r \end{bmatrix} \\
        &= \begin{bmatrix} a_1[\sigma_{11}(r)y_1 + \sigma_{12}(r)y_2 + \delta_1(r)] \\ b_1[\sigma_{21}(r)y_1 + \sigma_{22}(r)y_2 + \delta_2(r)] \end{bmatrix} +  \begin{bmatrix} a_0r \\ b_0r \end{bmatrix} \\
    &= \begin{bmatrix} \sigma_{11}(r) & \sigma_{12}(r) \\ \sigma_{21}(r) & \sigma_{22}(r) \end{bmatrix}\begin{bmatrix} a_1y_1\\ b_1y_2 \end{bmatrix}  + \begin{bmatrix} a_1\delta_1(r)\\ b_1\delta_2(r) \end{bmatrix} + \begin{bmatrix} a_0r\\ b_0r\end{bmatrix},
\end{align*}

and, 
\begin{align*}
    \sigma'(r)\begin{bmatrix} q_1\\ q_2 \end{bmatrix} + \delta'(r) &= \sigma'(r)\begin{bmatrix} a_1y_1 +a_0\\ b_1y_2 +b_0 \end{bmatrix} + \delta'(r) = \sigma'(r)\begin{bmatrix} a_1y_1\\ b_1y_2 \end{bmatrix} + \sigma'(r)\begin{bmatrix} a_0\\ b_0 \end{bmatrix} + \delta'(r)\\
    &= \begin{bmatrix} \sigma_{a_1}(\sigma_{11}(r)) & \sigma_{b_1}(\sigma_{12}(r)) \\ \sigma_{a_1}( \sigma_{21}(r)) & \sigma_{b_1}(\sigma_{22}(r)) \end{bmatrix}\begin{bmatrix} a_1y_1\\ b_1y_2 \end{bmatrix} \\
&\ \ + \begin{bmatrix} \sigma_{a_1}(\sigma_{11}(r)) & \sigma_{b_1}(\sigma_{12}(r)) \\ \sigma_{a_1}( \sigma_{21}(r)) & \sigma_{b_1}(\sigma_{22}(r)) \end{bmatrix} \begin{bmatrix} a_0\\ b_0 \end{bmatrix} + \begin{bmatrix} (a_1\delta_1 + \delta_{a_0,\sigma'_{1}})(r)\\ (b_1\delta_2 + \delta_{b_0,\sigma'_{2}})(r) \end{bmatrix}\\
    &= \begin{bmatrix} a_1\sigma_{11}(r)a_1^{-1}a_1y_1 + b_1\sigma_{12}(r)b_1^{-1}b_1y_2\\ a_1\sigma_{21}(r)a_1^{-1}a_1y_1 + b_1\sigma_{22}(r)b_1^{-1}b_1y_2 \end{bmatrix} \\
&\ \ + \begin{bmatrix} a_1\sigma_{11}(r)a_1^{-1}a_0 + b_1\sigma_{12}(r)b_1^{-1}b_0\\ a_1\sigma_{21}(r)a_1^{-1}a_0 + b_1\sigma_{22}(r)b_1^{-1}b_0 \end{bmatrix}\\
    &\ \ + \begin{bmatrix} a_1\delta_1(r) + a_0r - [a_1\sigma_{11}(r)a_1^{-1}a_0 + b_1\sigma_{12}(r)b_1^{-1}b_0] \\ b_1\delta_2(r) + b_0r - [a_1\sigma_{21}(r)a_1^{-1}a_0 + b_1\sigma_{22}(r)b_1^{-1}b_0] \end{bmatrix}\\
    &= \begin{bmatrix} a_1\sigma_{11}(r)y_1 + b_1\sigma_{12}(r)y_2\\ a_1\sigma_{21}(r)y_1 + b_1\sigma_{22}(r)y_2 \end{bmatrix} + \begin{bmatrix} a_1\delta_1(r) + a_0r  \\ b_1\delta_2(r) + b_0r  \end{bmatrix}\\
     &= \begin{bmatrix} \sigma_{11}(r) & \sigma_{12}(r) \\ \sigma_{21}(r) & \sigma_{22}(r) \end{bmatrix}\begin{bmatrix} a_1y_1\\ b_1y_2 \end{bmatrix}  + \begin{bmatrix} a_1\delta_1(r)\\ b_1\delta_2(r) \end{bmatrix} + \begin{bmatrix} a_0r\\ b_0r\end{bmatrix},
\end{align*}

we obtain $\begin{bmatrix} q_1\\ q_2 \end{bmatrix} r = \sigma'(r)\begin{bmatrix} q_1\\ q_2 \end{bmatrix} + \delta'(r)$ and $\begin{bmatrix} q_1\\ q_2 \end{bmatrix} = \begin{bmatrix} a_1y_1 +a_0\\ b_1y_2 +b_0 \end{bmatrix}$ is a dcv-matrix respect to $(\sigma', \delta')$.
\end{example}

\begin{example}
In Table \ref{secondtableDO}, we present necessary conditions to obtain polynomials and dcv-matrices depending of $q_1$ and $q_2$.
\end{example}

\begin{remark}
Due to the length of the paper, it is a pending task to exemplify dcv-matrices consisting of polynomials of degree two.
\end{remark}

Motivated by the notion of semi-invariant polynomial introduced by Lam and Leroy \cite[p. 84]{LamLeroy1992}, we say that a monomial $y_i^n \in B$ is {\it right double semi-invariant} if  $y_i^nR \subseteq Ry_1^n + Ry_2^n$ for $i = 1, 2$.

\begin{theorem}\label{theoreminvariandouble}
Consider $y_i^n$ a monomial right double semi-invariant of degree  $n \geq 1$. Let 
\[
 \begin{bmatrix} q_1\\ q_2 \end{bmatrix} = \begin{bmatrix} f(y_1, y_2)y_1^n + g_1(y_1,y_2)\\ f(y_1,y_2)y_2^n + g_2(y_1,y_2) \end{bmatrix} ,
\]
where for $i = 1, 2$ the degree of the polynomial $g_i(y_1,y_2)$ is less than or equal to $n$ and it is satisfied that $q_2q_1 = p'_{12}q_1q_2 + p'_{11}q_1^2 + \tau'_1q_1+ \tau'_2q_2 + \tau'_0$. Then $\begin{bmatrix} q_1\\ q_2 \end{bmatrix}$ is a dcv-matrix respect to $(\sigma', \delta')$ if and only if $\begin{bmatrix} g_1(y_1,y_2)\\ g_2(y_1,y_2) \end{bmatrix}$ is a dcv-matrix respect to $(\sigma', \delta')$ and $f(y_1,y_1)\sigma^n(-) = \sigma'(-)f(y_1,y_2)$.
\end{theorem}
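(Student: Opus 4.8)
The plan is to adapt the semi-invariant splitting argument of Lam and Leroy \cite{LamLeroy1992} to the matrix formalism of double Ore extensions. First I record the consequence of the hypothesis that $y_i^n$ is right double semi-invariant: for every $r\in R$ there is a matrix $\sigma^n(r)\in M_{2\times2}(R)$ with $\begin{bmatrix}y_1^n\\y_2^n\end{bmatrix}r=\sigma^n(r)\begin{bmatrix}y_1^n\\y_2^n\end{bmatrix}$. Since $\{y_1^iy_2^j\}$ is a left $R$-basis of $B$, the matrix $\sigma^n(r)$ is uniquely determined, and the usual computation $\left(\begin{bmatrix}y_1^n\\y_2^n\end{bmatrix}r\right)r'=\begin{bmatrix}y_1^n\\y_2^n\end{bmatrix}(rr')$ forces $\sigma^n$ to be an algebra homomorphism $R\to M_{2\times2}(R)$; this is the map denoted $\sigma^n$ in the statement. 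Writing $\mathbf q=\begin{bmatrix}q_1\\q_2\end{bmatrix}$, $\mathbf g=\begin{bmatrix}g_1\\g_2\end{bmatrix}$, $\mathbf Y=\begin{bmatrix}y_1^n\\y_2^n\end{bmatrix}$, and $f=f(y_1,y_2)$, so that $\mathbf q=f\mathbf Y+\mathbf g$, one obtains for every $r\in R$ the key identity
\[
\mathbf q\,r-\sigma'(r)\mathbf q-\delta'(r)=\bigl(f\,\sigma^n(r)-\sigma'(r)\,f\bigr)\mathbf Y+\bigl(\mathbf g\,r-\sigma'(r)\mathbf g-\delta'(r)\bigr),
\]
where $f\,\sigma^n(r)$ denotes the matrix over $B$ with $(i,j)$-entry $f\cdot(\sigma^n(r))_{ij}$ and $\sigma'(r)\,f$ the matrix with $(i,j)$-entry $\sigma'_{ij}(r)\cdot f$. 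This follows directly from $\mathbf Y\,r=\sigma^n(r)\mathbf Y$ and bilinearity, and it is the engine of the proof.

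For the implication $(\Leftarrow)$: if $\mathbf g$ is a dcv-matrix, then $\mathbf g\,r-\sigma'(r)\mathbf g-\delta'(r)=0$ for all $r$, and if moreover $f\,\sigma^n(-)=\sigma'(-)\,f$, the right-hand side of the key identity vanishes, so $\mathbf q\,r=\sigma'(r)\mathbf q+\delta'(r)$; together with the standing hypothesis that $q_1,q_2$ satisfy (\ref{conditionhomo1}), this is precisely the statement that $\mathbf q$ is a dcv-matrix. For $(\Rightarrow)$, assume $\mathbf q$ is a dcv-matrix; then the left-hand side of the key identity is $0$, hence
\[
\bigl(f\,\sigma^n(r)-\sigma'(r)\,f\bigr)\mathbf Y=-\bigl(\mathbf g\,r-\sigma'(r)\mathbf g-\delta'(r)\bigr),\qquad r\in R.
\]
The right-hand side has $y$-degree at most $n$ because $\deg g_i\le n$. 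I would then argue by comparing $y$-degrees that both sides vanish: since $\mathbf Y\,r=\sigma^n(r)\mathbf Y$ preserves degree (this is exactly the force of semi-invariance), the highest-degree part of $\bigl(f\,\sigma^n(r)-\sigma'(r)\,f\bigr)\mathbf Y$ is governed by the highest-degree part of $f$, and matching terms of top degree shows this part must vanish; descending degree by degree then gives $f\,\sigma^n(r)-\sigma'(r)\,f=0$ in $M_{2\times2}(B)$. Reading off the $y_1^n$- and $y_2^n$-components (again using that $\{y_1^iy_2^j\}$ is a left $R$-basis) yields $f\,\sigma^n(r)=\sigma'(r)\,f$; feeding this back shows $\mathbf g\,r-\sigma'(r)\mathbf g-\delta'(r)=0$, i.e.\ $\mathbf g$ satisfies (\ref{dcv-polirelation}). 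Finally, substituting $q_i=fy_i^n+g_i$ into (\ref{conditionhomo1}) for $q_1,q_2$ and simplifying with the intertwining $f\,\sigma^n=\sigma'\,f$ shows $g_1,g_2$ also satisfy (\ref{conditionhomo1}), so $\mathbf g$ is a dcv-matrix.

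The step I expect to be the main obstacle is the degree bookkeeping in the $(\Rightarrow)$ direction. Unlike an Ore extension, a double Ore extension may \emph{lower} $y$-degree upon multiplication — through the terms $p_{11}y_1^2+\tau_1y_1+\tau_2y_2+\tau_0$ in (\ref{Carvalhoetal2011(1.I)}) and through $\delta$ — so the separation of the two summands in the key identity is not a purely formal degree count. The cleanest remedy is to carry out the comparison in the associated graded algebra $G(B)$ of the natural filtration on $B$ (as in \cite[Corollary 1.8]{Carvalhoetal2011}), where multiplication introduces no lower-order terms, after first reducing to $p_{11}=0$ by Proposition \ref{Carvalhoetal2011Lemma1.7}(2), and then transporting the conclusions back to $B$; the hypothesis that $y_i^n$ is right double semi-invariant is exactly what makes the leading part of $\mathbf q$ transform cleanly in $G(B)$. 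The verification that (\ref{conditionhomo1}) passes between $q_1,q_2$ and $g_1,g_2$ is a direct but somewhat lengthy manipulation, which I would isolate as a preliminary computation.
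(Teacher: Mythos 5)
Your proposal follows essentially the same route as the paper: the same splitting $\mathbf{q} = f\,\mathbf{Y} + \mathbf{g}$ with $\mathbf{Y} = \begin{bmatrix} y_1^n \\ y_2^n \end{bmatrix}$, the same use of right double semi-invariance to write $\mathbf{Y}r = \sigma^n(r)\mathbf{Y}$, and the same term-by-term comparison of $\mathbf{q}r$ with $\sigma'(r)\mathbf{q} + \delta'(r)$ in both directions. The degree-separation step you single out as the main obstacle in the forward direction is one the paper simply asserts by inspecting its two displayed expressions, so your extra caution there (and the proposed detour through the associated graded algebra) goes beyond, rather than diverges from, the published argument.
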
 
\begin{proof}
First, suppose that $\begin{bmatrix} q_1\\ q_2 \end{bmatrix}$ is a dcv-matrix respect to $(\sigma', \delta')$. Then
\[
 \begin{bmatrix} q_1\\ q_2 \end{bmatrix} r = \sigma'(r)\begin{bmatrix} q_1\\ q_2 \end{bmatrix} + \delta'(r) \notag = \sigma'(r)\begin{bmatrix} fy_1^n + g_1(y_1,y_2)\\ fy_2^n + g_2(y_1,y_2) \end{bmatrix} + \delta'(r),
\]

\begin{landscape}
{\normalsize{
\begin{longtable}{|c|c|c|}
\hline
\multicolumn{3}{ |c| }{ $\boldsymbol{q_2 = b_1y_2 + b_0 }$ }\\
\hline 
\textbf{cv-polynomial} &  $(\sigma', \delta')$ & \textbf{Conditions}\\
\hline $\begin{array}{lr}  q_1 = d \ \end{array}$
& $\begin{bmatrix}\sigma'_{11} & \sigma'_{12} = 0 \\ \sigma'_{21} = 0 & \sigma'_{22} = {\rm id}\end{bmatrix}$, \quad $\begin{bmatrix}
\delta'_1 = \delta_{1(d,\sigma'_{11})} \\
\delta'_2 = b_1\delta_2 + \delta_{2(b_0, \sigma'_{22})}
\end{bmatrix}$& $\begin{cases}
p'_{12} = 1,\\
p'_{11} = \tau'_1 = \tau'_2  = 0\\
\sigma_{21} = 0\\
\sigma_{22}= {\rm id}\\
\tau'_0= b_1\delta_2(d)\\
d \ \ \text{commutes}
\end{cases}$\\
\hline $\begin{array}{lr}  q_1 = a_1y_1 + b_0 \ \end{array}$
& $\begin{bmatrix} \sigma'_{11} = \sigma_{a_1}(\sigma_{11}(r)) &
\sigma'_{12} = \sigma_{b_1}(\sigma_{12}(r))\\
\sigma'_{21} = \sigma_{a_1}( \sigma_{21}(r)) & \sigma'_{22}= \sigma_{b_1}(\sigma_{22}(r)) \end{bmatrix}$, \quad $\begin{bmatrix}
\delta'_1 = a_1\delta_1 + \\
cr - [a_1\sigma_{11}(r)a_1^{-1}b_0 + b_1\sigma_{12}(r)b_1^{-1}b_0] \\
\delta'_2 = b_1\delta_2 + \\
cr - [a_1\sigma_{21}(r)a_1^{-1}b_0 + b_1\sigma_{22}(r)b_1^{-1}b_0]
\end{bmatrix}$& $\begin{cases}
p'_{12} = \tau'_2 = \tau'_0 = 0 ,\\
p'_{11} = b_1a_1^{-1}\\
p_{12} = 1\\
\tau'_1 = b_0 - b_1b_0a_1^{-1} \\
\sigma_{11} = \sigma_{21}\\
\sigma_{12} = \sigma_{22}\\
\delta_{1} = \delta_2\\
a_1, b_1\ \text{commute in}\ R
\end{cases}$\\
\hline $\begin{array}{lr}  q_1 = a_1y_1 + a_0 \ \end{array}$
& $\begin{bmatrix} \sigma'_{11} = \sigma_{a_1}(\sigma_{11}(r)) &
\sigma'_{12} = \sigma_{b_1}(\sigma_{12}(r))\\
\sigma'_{21} = \sigma_{a_1}( \sigma_{21}(r)) & \sigma'_{22}= \sigma_{b_1}(\sigma_{22}(r)) \end{bmatrix}$, \quad $\begin{bmatrix}
\delta'_1 = a_1\delta_1 + \\
a_0r - [a_1\sigma_{11}(r)a_1^{-1}a_0 + b_1\sigma_{12}(r)b_1^{-1}b_0] \\
\delta'_2 = b_1\delta_2 + \\
b_0r - [a_1\sigma_{21}(r)a_1^{-1}a_0 + b_1\sigma_{22}(r)b_1^{-1}b_0]
\end{bmatrix}$& $\begin{cases}
p'_{12} = \tau'_2 = 0 ,\\
p'_{11} = b_1a_1^{-1}\\
\tau'_1 = b_0 - b_1a_1^{-1}a_0 \\
\tau'_0 = b_0a_0 - b_1a_1^{-1}a_0^2\\
\sigma_{11} = \sigma_{21}\\
\sigma_{12} = \sigma_{22}\\
\delta_{1} = \delta_2
\end{cases}$\\
\hline 
\caption{dcv-matrices}
\label{secondtableDO}
\end{longtable}
}}
\end{landscape}

or what is the same,
\begin{align}
\begin{bmatrix} q_1\\ q_2 \end{bmatrix} r & = \sigma'(r)\begin{bmatrix} fy_1^n \\ fy_2^n  \end{bmatrix} + \sigma'(r)\begin{bmatrix}  g_1(y_1,y_2)\\  g_2(y_1,y_2) \end{bmatrix} + \delta'(r)\\
& = \begin{bmatrix} \sigma'_{11}(r)f & \sigma'_{12}(r)f \\ \sigma'_{21}(r)f & \sigma'_{22}(r)f \\  \end{bmatrix} \begin{bmatrix} y_1^n \\ y_2^n  \end{bmatrix} + \sigma'(r)\begin{bmatrix}  g_1(y_1,y_2)\\  g_2(y_1,y_2) \end{bmatrix} + \delta'(r), \label{invariantpart1}
\end{align}

and
    \begin{align}\label{invariantpart2}
        \begin{bmatrix} q_1\\ q_2 \end{bmatrix}r & = \begin{bmatrix} fy_1^n + g_1(y_1,y_2)\\ fy_2^n + g_2(y_1,y_2) \end{bmatrix} r \notag = \begin{bmatrix} fy_1^nr + g_1(y_1,y_2)r\\ fy_2^nr + g_2(y_1,y_2)r \end{bmatrix} \notag\\
    &= \begin{bmatrix} f\sigma_{11}^n(r)y_1^n + f\sigma_{12}^n(r)y_2^n\\ f\sigma_{21}^n(r)y_1^n + f\sigma_{22}^n(r)y_2^n\end{bmatrix} +\begin{bmatrix}  g_1(y_1,y_2)r\\  g_2(y_1,y_2)r \end{bmatrix}\notag\\
        &= \begin{bmatrix} f\sigma_{11}^n(r) & f\sigma_{12}^n(r) \\ f\sigma_{21}^n(r) & f\sigma_{22}^n(r) \\  \end{bmatrix}\begin{bmatrix} y_1^n \\ y_2^n  \end{bmatrix}+\begin{bmatrix}  g_1(y_1,y_2)r\\  g_2(y_1,y_2)r \end{bmatrix},
    \end{align}
    
due to that $y_i^n$ is a monomial right double semi-invariant for $i = 1,2$. Both expressions (\ref{invariantpart1}) and (\ref{invariantpart2}) show that $f(y_1,y_2)\sigma^n(-) = \sigma'(-)f(y_1, y_2)$ and 
\begin{equation}\label{equationg}
   \begin{bmatrix}  g_1(y_1,y_2)\\  g_2(y_1,y_2) \end{bmatrix}r = \sigma'(r)\begin{bmatrix}  g_1(y_1,y_2)\\  g_2(y_1,y_2) \end{bmatrix} + \delta'(r). 
\end{equation}

Conversely, if $\begin{bmatrix}  g_1(y_1,y_2)\\  g_2(y_1,y_2) \end{bmatrix}$ is a dcv-matrix respect to $(\sigma', \delta')$, then 
\[
\begin{bmatrix}  g_1(y_1,y_2)\\  g_2(y_1,y_2) \end{bmatrix}r = \sigma'(r)\begin{bmatrix}  g_1(y_1,y_2)\\  g_2(y_1,y_2) \end{bmatrix} + \delta'(r),
\]

and since $f(y_1,y_2)\sigma^n(-) = \sigma'(-)f(y_1, y_2)$, we get
\begin{align*}
        \begin{bmatrix} q_1\\ q_2 \end{bmatrix}r & = \begin{bmatrix} fy_1^nr + g_1(y_1,y_2)r\\ fy_2^nr + g_2(y_1,y_2)r \end{bmatrix} \notag\\
        &= \begin{bmatrix} f\sigma_{11}^n(r)y_1^n + f\sigma_{12}^n(r)y_2^n\\ f\sigma_{21}^n(r)y_1^n + f\sigma_{22}^n(r)y_2^n\end{bmatrix} +\begin{bmatrix}  g_1(y_1,y_2)\\  g_2(y_1,y_2) \end{bmatrix}\notag\\
        &= \begin{bmatrix} f\sigma_{11}^n(r) & f\sigma_{12}^n(r) \\ f\sigma_{21}^n(r) & f\sigma_{22}^n(r) \\  \end{bmatrix}\begin{bmatrix} y_1^n \\ y_2^n  \end{bmatrix}+\begin{bmatrix}  g_1(y_1,y_2)r\\  g_2(y_1,y_2)r \end{bmatrix}\\
        &= \begin{bmatrix} \sigma'_{11}(r)f & \sigma'_{12}(r)f \\ \sigma'_{21}(r)f & \sigma'_{22}(r)f \\  \end{bmatrix}\begin{bmatrix} y_1^n \\ y_2^n \end{bmatrix} +\sigma'(r)\begin{bmatrix}  g_1(y_1,y_2)\\  g_2(y_1,y_2) \end{bmatrix} + \delta'(r)\\
        & = \sigma'(r)\begin{bmatrix} fy_1^n \\ fy_2^n  \end{bmatrix} + \sigma'(r)\begin{bmatrix}  g_1(y_1,y_2)\\  g_2(y_1,y_2) \end{bmatrix} + \delta'(r) = \sigma'(r)\begin{bmatrix} q_1\\ q_2 \end{bmatrix} + \delta'(r).
    \end{align*}
\end{proof}

Theorem \ref{isoOreextension} characterizes isomorphisms between double Ore extensions.

\begin{theorem}\label{isoOreextension}
Let $\varphi:B^{'\oplus 2} \to B^{\oplus 2}$ be a homomorphism of right double extensions as in Definition \ref{definitionhomo}. If $\varphi$ is an isomorphism, then the polynomials $q_1$ and $q_2$ of the dcv-matrix have degree one respect to the indeterminates $y_1$ and $y_2$, respectively.
\end{theorem}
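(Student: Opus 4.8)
The plan is to turn the statement into a growth (Hilbert-function) comparison inside $B$, after choosing the right filtration. First I would record what the hypothesis really says. By Definition \ref{definitionhomo}, giving $\varphi$ is the same as giving the left $R$-linear algebra homomorphism $\phi\colon B'\to B$ with $\phi(y'_1)=q_1$, $\phi(y'_2)=q_2$, so that $\phi\bigl(y_1^{'n}y_2^{'m}\bigr)=q_1^{\,n}q_2^{\,m}$ for all $n,m\ge0$. Since $B'$ is a free left $R$-module on $\{y_1^{'n}y_2^{'m}\}$ and an algebra homomorphism of $B'$ which is a left $R$-module isomorphism is automatically an algebra isomorphism, $\varphi$ is an isomorphism if and only if the family $\{q_1^{\,n}q_2^{\,m}:n,m\ge0\}$ is again a free left $R$-basis of $B$; I assume this from now on. Next, on $B$ put $F_N:=\bigoplus_{i+j\le N}R\,y_1^iy_2^j$. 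Relations (\ref{Carvalhoetal2011(1.I)}) and (\ref{Carvalhoetal2011(1.II)}) give $y_2y_1\in F_2$ and $y_ir\in F_1$ for $r\in R$, whence $F_MF_N\subseteq F_{M+N}$; thus $\{F_N\}_{N\ge0}$ is an exhaustive algebra filtration with $F_0=R$ and $\operatorname{rank}_RF_N=\binom{N+2}{2}$. Write $\deg$ for the induced degree on $B$, and set $d_i:=\deg q_i$; the claim is exactly $d_1=d_2=1$.

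For the lower bound $d_i\ge1$: if $q_1\in R$, then $q_1\cdot\bigl(q_1^{\,0}q_2^{\,0}\bigr)-1\cdot\bigl(q_1^{\,1}q_2^{\,0}\bigr)=0$ is a nontrivial left $R$-linear relation among the $q_1^{\,n}q_2^{\,m}$ (coefficients $q_1$ and $-1$, not both $0$, on two distinct basis vectors), contradicting that they form a basis; hence $q_1\notin R$, and symmetrically $q_2\notin R$, so $d_1,d_2\ge1$.

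For the upper bound $d_i\le1$, the idea is a growth count. Suppose first we are in the favourable situation where $\operatorname{gr}(B)=\bigoplus_NF_N/F_{N-1}$ is a domain — e.g. when $R$ is a domain, using the normalization $p_{11}=0$ (or $p_{12}=1$) of Proposition \ref{Carvalhoetal2011Lemma1.7}, which presents $\operatorname{gr}(B)$ as a quantum-affine / iterated-Ore extension of $R$. Then $\deg$ is multiplicative, so $\deg(q_1^{\,n}q_2^{\,m})=nd_1+md_2$, and moreover, expanding any $b\in F_N$ in the $q$-basis and grouping by degree shows (no cancellation of top symbols, since distinct monomials of the same degree cannot have $R$-proportional symbols — if they did, relation (\ref{conditionhomo1}) used to straighten $q_2q_1$ would yield a nontrivial $R$-relation among the $q_1^{\,n}q_2^{\,m}$) that only monomials with $nd_1+md_2\le N$ occur. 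Hence $F_N\subseteq\sum_{nd_1+md_2\le N}R\,q_1^{\,n}q_2^{\,m}$, a free left $R$-module of rank $\#\{(n,m):nd_1+md_2\le N\}$, so $\binom{N+2}{2}=\operatorname{rank}_RF_N\le\#\{(n,m):nd_1+md_2\le N\}$ for every $N$. The right-hand side grows like $\tfrac{N^2}{2d_1d_2}$ and the left like $\tfrac{N^2}{2}$, forcing $d_1d_2\le1$, i.e. $d_1=d_2=1$. (Equivalently: $\operatorname{gr}(B)$, free of rank $N+1$ in degree $N$ over $R$, cannot be generated over $R$ by the homogeneous symbols of $q_1$ and $q_2$ unless those symbols sit in degree $1$.)

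The main obstacle is to remove the domain hypothesis and make the non-cancellation argument work over an arbitrary base ring $R$, where zero divisors could a priori drop $\deg(q_1^{\,n}q_2^{\,m})$ below $nd_1+md_2$ or collapse a linear combination to lower degree. The lever I would rely on is that $\{q_1^{\,n}q_2^{\,m}\}$ is a genuine free $R$-basis, not merely a spanning set: any such degeneracy among monomials $q_1^{\,n}q_2^{\,m}$ (a symbol that is a left zero divisor annihilated by some coefficient, or two monomials with proportional symbols) can be converted — after straightening products $q_2q_1$ via (\ref{conditionhomo1}) and using the $R$-relations (\ref{dcv-polirelation}) on $q_1,q_2$ — into a nontrivial left $R$-linear relation among the $q_1^{\,n}q_2^{\,m}$, contradicting the basis property; a secondary, purely bookkeeping, point is checking that the conclusion $\deg q_i=1$ is unaffected by the degree-one change of generators used in Proposition \ref{Carvalhoetal2011Lemma1.7}. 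I expect the former to be the real work; once it is in place the Hilbert-function count of the previous paragraph closes the argument verbatim.
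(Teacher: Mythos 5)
Your reduction of the problem to the statement that $\{q_1^{\,n}q_2^{\,m}\}_{n,m\ge0}$ is a free left $R$-basis of $B$ is correct, and your lower bound is fine: the relation $q_1\cdot(q_1^{0}q_2^{0})-1\cdot(q_1^{1}q_2^{0})=0$ for $q_1\in R$ cleanly rules out constant $q_i$, and is in fact tidier than the case analysis in the paper's proof (which instead exhibits pairs $f\neq g$ with $\phi(f)=\phi(g)$). Your upper bound, however, takes a genuinely different route from the paper's, and that is where the gap sits. The paper argues directly from surjectivity: some $f\in B'$ must satisfy $\phi(f)=y_1$, i.e.\ $y_1=\sum a_{ij}q_1^{n(i)}q_2^{m(j)}$, and if every nonconstant monomial $q_1^{\,n}q_2^{\,m}$ has degree at least $2$ this is declared impossible; no filtration, asymptotics, or rank comparison is needed. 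You replace this with the containment $F_N\subseteq\sum_{nd_1+md_2\le N}R\,q_1^{\,n}q_2^{\,m}$ and a Hilbert-function count forcing $d_1d_2\le1$.

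The gap: that containment is exactly the assertion that the top symbols of the $q_1^{\,n}q_2^{\,m}$ with $nd_1+md_2=D$ are left $R$-linearly independent in degree $D$ of the associated graded ring, and you do not prove it --- you flag it yourself as ``the real work.'' Note that ``distinct monomials of the same degree cannot have $R$-proportional symbols'' is strictly weaker than the independence you need once three or more monomials share a degree, and the suggested repair (``a failure would yield a nontrivial $R$-relation among the $q_1^{\,n}q_2^{\,m}$'') is circular as stated: a vanishing sum of top symbols only says $\sum r_{nm}q_1^{\,n}q_2^{\,m}\in F_{D-1}$, and re-expanding $F_{D-1}$ in the $q$-basis using only lower-degree monomials is precisely the containment being proved. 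A second, unacknowledged issue is the inference $\operatorname{rank}_RF_N\le\#\{(n,m):nd_1+md_2\le N\}$ from an inclusion of free modules: over a general noncommutative $\Bbbk$-algebra $R$ a free submodule of a free module need not have smaller rank (one needs the strong rank condition, e.g.\ $R$ left Noetherian), whereas the paper's statement places no such hypothesis on $R$. To be fair, the paper's own argument silently relies on the same leading-term independence (an $R$-combination of elements of degree $0$ or $\ge2$ could a priori collapse to degree $1$ by cancellation), so your proposal makes explicit a difficulty the paper suppresses; but as written neither your counting argument nor its key lemma is established.
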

\begin{proof}
Let $\varphi$ be an isomorphism of right double extensions. Since $\varphi$ depends of the map $\phi$, it is clear that for $\varphi$ to be an isomorphism it is necessary that $\phi$ be an isomorphism. By Definition \ref{definitionhomo}, 
\[
\varphi\begin{bmatrix} y'_1\\ y'_2 \end{bmatrix}:=  \begin{bmatrix} \phi(y'_1)\\ \phi(y'_2) \end{bmatrix} = \begin{bmatrix} q_1\\ q_2 \end{bmatrix},
\]

so we consider the following cases:
\begin{itemize}
\item [\rm (1)] $q_1 = c$ and $q_2 = c^{-1}dc$, with $c, d \in R$. We can find elements $ f, g \in B'$ such that $f = dy'_1$ and $g = cy'_2$, whence
\[
    \varphi\begin{bmatrix} f\\ g \end{bmatrix} =  \begin{bmatrix} \phi(f)\\ \phi(g) \end{bmatrix} = \begin{bmatrix} dq_1\\ cq_2 \end{bmatrix} = \begin{bmatrix} dc \\ dc \end{bmatrix},
\]

but since $f \neq g$, it contradicts the injectivity of $\phi$.

\item [\rm (2)] $q_1 = c \in R$, and $q_2 \in B$ with degree greater than zero. Let $f, g \in B'$ such that $f = dy'_1$ and $g = dc$. Then
    \[
    \varphi\begin{bmatrix} f\\ g \end{bmatrix}:=  \begin{bmatrix} \phi(f)\\ \phi(g) \end{bmatrix} = \begin{bmatrix} dq_1\\ dc \end{bmatrix} = \begin{bmatrix} dc \\ dc \end{bmatrix},
    \]
    which contradicts again the injectivity of $\phi$.

\item [\rm (3)] $q_1 \in B$ of degree greater than zero and $q_2 = d\in R$. Consider elements $f, g \in B'$ such that $f = d$ and $g = y'_2$. Then
    \[
    \varphi\begin{bmatrix} f\\ g \end{bmatrix}:=  \begin{bmatrix} \phi(f)\\ \phi(g) \end{bmatrix} = \begin{bmatrix} d\\ q_2 \end{bmatrix} = \begin{bmatrix} d \\ d \end{bmatrix},
    \]
a contradiction.
\end{itemize}

Suppose that the polynomials $q_1$ and $q_2$ have degree $n, k >1$, respectively. Since $\varphi$ is surjective, for the indeterminates $y_1, y_2\in B$ there exist polynomials $f, g \in B'$ such that 
\[
\begin{bmatrix} y_1\\ y_2 \end{bmatrix} = \varphi\begin{bmatrix} f\\ g \end{bmatrix} = \begin{bmatrix} \phi(f)\\ \phi(g) \end{bmatrix} = \begin{bmatrix} \phi\left(\sum\limits_{i,j} a_{i,j}y^{'n(i)}_{1}y^{'m(j)}_{2}\right)\\ \phi\left(\sum\limits_{i,j} b_{i,j}y^{'k(i)}_{1}y^{'l(j)}_{2}\right)\end{bmatrix} = \begin{bmatrix} \sum\limits_{i,j} a_{i,j}q_1^{n(i)}q_2^{m(j)} \\ \sum\limits_{i,j} b_{i,j}q_1^{k(i)}q_2^{l(j)}\end{bmatrix},
\]

whence necessarily the degree of $q_1$ is one and the degree of $q_2$ is zero. However, since the polynomials have degree greater than one it follows that there are no polynomials $f, g \in B'$ with $\varphi\begin{bmatrix} f\\ g \end{bmatrix} = \begin{bmatrix} y_1\\ y_2 \end{bmatrix}$, which gives us a contradiction. We conclude that both $p_1$ and $p_2$ must have degree exactly one.
\end{proof}

\subsection{Homomorphisms between trimmed double Ore extension}\label{trimmeddouble}

One of the particular cases of the double Ore extensions is presented by Zhang and Zhang \cite[Convention 1.6.(c)]{ZhangZhang2008} as a {\it trimmed double Ore extension}, for which $\delta$ is the zero map and $\tau = \{0, 0, 0\}$. We use the short notation $B = R_p[y_1, y_2; \sigma]$ to denote this subclass of extensions. 

Next, we explore the homomorphisms of trimmed double Ore extension and compare our results with those corresponding by Zhu et al. \cite{ZhuVanOystaeyenZhang2017} where they computed the Nakayama automorphism of a trimmed double Ore extension. 

\begin{theorem}\label{dcvtrimmed}
Consider two trimmed double Ore extensions $B' = R_{P'}[y'_1, y'_2; \sigma']$ and $B = R_P[y_1, y_2; \sigma]$, where 
\[
\begin{bmatrix} q_1\\ q_2 \end{bmatrix} = \begin{bmatrix} \sum\limits_{i= 1}^n a_iy_1^i\\ \sum\limits_{j=1}^m b_iy_2^j \end{bmatrix},
\]

and it is satisfied that $\phi(y'_2y'_1) = \phi(p'_{12}y'_1y'_2 + p'_{11}y_1^{'2})$, i.e, $q_2q_1 = p'_{12}q_1q_2 + p'_{11}q_1^2$. Then $\begin{bmatrix} q_1\\ q_2 \end{bmatrix}$ is a dcv-matrix if and only if the degree of $q_1$ and $q_2$ is the same {\rm (}$n = m${\rm )}, $a_i = b_i$ for $1\leq i \leq n$, $\sigma_{ll}\sigma_{lk}(-) = 0,\ \sigma_{lk}\sigma_{ll}(-) = 0$, and $a_i\sigma_{pq}^i(-) = \sigma'_{pq}(-)a_i$ for $l \neq k$ and $1\leq p,q, l,k\leq 2$. 
\end{theorem}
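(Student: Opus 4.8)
The plan rests on the observation that, $B'$ being \emph{trimmed}, one has $\delta'=0$ and $\tau'=\{0,0,0\}$; hence ``$\begin{bmatrix} q_1\\ q_2\end{bmatrix}$ is a dcv-matrix with respect to $(\sigma',\delta')$'' amounts precisely to the two identities $q_1 r=\sigma'_{11}(r)q_1+\sigma'_{12}(r)q_2$ and $q_2 r=\sigma'_{21}(r)q_1+\sigma'_{22}(r)q_2$ for all $r\in R$, i.e.\ to relation (\ref{dcv-polirelation}) with vanishing $\delta'$, while relation (\ref{conditionhomo1}) is exactly the standing hypothesis $q_2q_1=p'_{12}q_1q_2+p'_{11}q_1^2$. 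A trimmed double Ore extension is moreover $\mathbb{N}$-graded by total degree in $y_1,y_2$ (every defining relation is homogeneous), so right multiplication by $r\in R$ is degree-preserving; consequently $a_iy_1^i r$ and $b_jy_2^j r$ are the homogeneous components of $q_1r$ and $q_2r$ in degrees $i$ and $j$, and the whole proof reduces to comparing coefficients in the left $R$-basis $\{y_1^ay_2^b : a,b\ge 0\}$ of $B$.

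For the direction ($\Leftarrow$) I would assume $n=m$, $a_i=b_i$, the vanishing relations $\sigma_{ll}\sigma_{lk}=\sigma_{lk}\sigma_{ll}=0$ for $l\ne k$, and the intertwining $a_i\sigma_{pq}^i(-)=\sigma'_{pq}(-)a_i$ $(1\le p,q\le 2)$. From $y_1r=\sigma_{11}(r)y_1+\sigma_{12}(r)y_2$ and $y_2r=\sigma_{21}(r)y_1+\sigma_{22}(r)y_2$ a one-line induction on $i$ gives $y_1^i r=\sigma_{11}^i(r)y_1^i+\sigma_{12}^i(r)y_2^i$ and $y_2^j r=\sigma_{21}^j(r)y_1^j+\sigma_{22}^j(r)y_2^j$: at each step the would-be mixed term acquires a left factor of one of $\sigma_{11}\sigma_{12},\sigma_{12}\sigma_{11},\sigma_{22}\sigma_{21},\sigma_{21}\sigma_{22}$ and dies, so the relation $y_2y_1=p_{12}y_1y_2+p_{11}y_1^2$ is never even invoked. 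Substituting into $q_1r=\sum_i a_iy_1^i r$ and $q_2r=\sum_j a_jy_2^j r$ and pushing each $a_i$ past $\sigma_{pq}^i(r)$ via the intertwining relations turns $q_1r$ into $\sigma'_{11}(r)q_1+\sigma'_{12}(r)q_2$ and $q_2r$ into $\sigma'_{21}(r)q_1+\sigma'_{22}(r)q_2$; combined with (\ref{conditionhomo1}) this exhibits $\begin{bmatrix} q_1\\ q_2\end{bmatrix}$ as a dcv-matrix.

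For the direction ($\Rightarrow$) I would write $y_1^i r=\sum_{a+b=i}c^{(i)}_{a,b}(r)\,y_1^ay_2^b$ and $y_2^i r=\sum_{a+b=i}d^{(i)}_{a,b}(r)\,y_1^ay_2^b$ using the general commutation rules (reducing $y_2y_1$ by relation (\ref{Carvalhoetal2011(1.I)}) with $\tau=0$), and read off the degree-$i$ part of (\ref{dcv-polirelation}): $a_i\,y_1^i r=\sigma'_{11}(r)a_iy_1^i+\sigma'_{12}(r)b_iy_2^i$ and $b_i\,y_2^i r=\sigma'_{21}(r)a_iy_1^i+\sigma'_{22}(r)b_iy_2^i$ (with $a_i=0$ for $i>n$, $b_i=0$ for $i>m$). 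Comparing coefficients in the $R$-basis forces $a_ic^{(i)}_{a,b}=b_id^{(i)}_{a,b}=0$ on all genuinely mixed $(a,b)$, plus the four ``corner'' equalities involving $c^{(i)}_{i,0},c^{(i)}_{0,i},d^{(i)}_{i,0},d^{(i)}_{0,i}$. One checks by induction that the pure coefficients $c^{(i)}_{0,i}=\sigma_{12}^i$ and $d^{(i)}_{0,i}=\sigma_{22}^i$ carry no corrections, giving $a_i\sigma_{12}^i(-)=\sigma'_{12}(-)b_i$ and $b_i\sigma_{22}^i(-)=\sigma'_{22}(-)b_i$; matching top degrees in these and in the mirror pair $a_i\sigma_{11}^i$ versus $b_i\sigma_{21}^i$, using that $a_n,b_m\ne 0$, forces $n=m$. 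Then the vanishing of the mixed coefficients — analyzed from the extremal monomials $y_1y_2^{\,i-1}$ and $y_1^{\,i-1}y_2$, whose coefficients are governed by $\sigma_{11}\sigma_{12},\sigma_{12}\sigma_{11}$ (resp.\ $\sigma_{21}\sigma_{22},\sigma_{22}\sigma_{21}$) plus $p_{11},p_{12}$-weighted lower terms — yields $\sigma_{ll}\sigma_{lk}=\sigma_{lk}\sigma_{ll}=0$; with those in force the corner coefficients collapse to $c^{(i)}_{i,0}=\sigma_{11}^i$, $d^{(i)}_{i,0}=\sigma_{21}^i$, the four corner equalities become $a_i\sigma_{11}^i(-)=\sigma'_{11}(-)a_i$, $a_i\sigma_{12}^i(-)=\sigma'_{12}(-)b_i$, $b_i\sigma_{21}^i(-)=\sigma'_{21}(-)a_i$, $b_i\sigma_{22}^i(-)=\sigma'_{22}(-)b_i$, and comparing the two mixed-index equalities gives $a_i=b_i$, whence the intertwining relations in the symmetric form stated.

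The main obstacle is the necessity of the \emph{separated} vanishing conditions $\sigma_{11}\sigma_{12}=0$ and $\sigma_{12}\sigma_{11}=0$ (and their second-row analogues): already in degree $2$ the mixed-coefficient equation only gives $a_2\bigl(p_{12}\sigma_{12}\sigma_{11}+\sigma_{11}\sigma_{12}\bigr)=0$, a \emph{combination} of the two composites weighted by the parameter $p_{12}$, and untangling it requires climbing through the higher mixed monomials together with a non-degeneracy input on the leading coefficients $a_n,b_m$ and, if needed, the structure relations (\ref{Carvalhoetal2011(1.III)})--(\ref{Carvalhoetal2011(1.VIII)}) of Proposition \ref{Carvalhoetal2011Proposition1.5}, which for a trimmed extension relate the $\sigma_{ij}$ through $p_{11},p_{12}$. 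The cleanest organization is to prove, as a preliminary lemma, closed formulas for $y_1^i r$ and $y_2^j r$ modulo the two-sided ideals generated by $\{\sigma_{11}\sigma_{12},\sigma_{12}\sigma_{11}\}$ and $\{\sigma_{22}\sigma_{21},\sigma_{21}\sigma_{22}\}$, and then run both implications off that lemma; the remaining bookkeeping ($n=m$, $a_i=b_i$, the intertwining relations) is routine.
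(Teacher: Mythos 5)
Your proposal follows the same strategy as the paper's proof: expand $q_1r$ and $q_2r$ through the commutation rules and compare coefficients against $\sigma'(r)\begin{bmatrix} q_1\\ q_2\end{bmatrix}$ in the left $R$-basis $\{y_1^ay_2^b\}$, forcing the mixed monomials to vanish and reading off $n=m$, $a_i=b_i$ and the intertwining relations from the pure terms. You are in fact more explicit than the paper at the two delicate points: the converse (which the paper dismisses as ``straightforward'' and you handle by induction on the exponent using the vanishing hypotheses), and the observation that the mixed coefficients only produce $p_{12}$-, $p_{11}$-weighted combinations such as $a_2\bigl(p_{12}\sigma_{12}\sigma_{11}+\sigma_{11}\sigma_{12}\bigr)=0$ rather than the separated conditions $\sigma_{ll}\sigma_{lk}=0$ and $\sigma_{lk}\sigma_{ll}=0$ --- a step you leave as an acknowledged lemma still to be proved, and which the paper's proof elides by simply declaring each ``possible combination'' to be zero.
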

\begin{proof}
Suppose that $\begin{bmatrix} q_1\\ q_2 \end{bmatrix}$ is dcv-matrix. By Definition \ref{definitionhomooredoble}, 
    $ \begin{bmatrix} q_1\\ q_2 \end{bmatrix} r = \sigma'(r)\begin{bmatrix} q_1\\ q_2 \end{bmatrix}$, which implies that
    \begin{align}\label{condition1trimmed}
        \begin{bmatrix} q_1r\\ q_2 r\end{bmatrix} &= \begin{bmatrix} \sum\limits_{i= 1}^n a_iy_1^i r\\ \sum\limits_{j=1}^m b_iy_2^j r \end{bmatrix}\notag\\
        &= \begin{bmatrix}  a_n\sigma_{11}^n(r)y_1^n + a_n\sigma_{12}^n(r)y_2^n + a_{n-1}\sigma_{11}^{n-1}(r)y_1^{n-1}\\
        + a_{n-1}\sigma_{12}^{n-1}(r)y_2^{n-1} + \cdots+ a_1\sigma_{11}(r)y_1+ a_1\sigma_{12}(r)y_2\\
        +  \sum\limits_{l,k = 0}^n p_{\sigma_{11}\sigma_{12}}y_1^ly_2^k\\  b_m\sigma_{21}^m(r)y_1^m + b_m\sigma_{22}^m(r)y_2^m + b_{m-1}\sigma_{21}^{m-1}(r)y_1^{m-1}\\
        + b_{m-1}\sigma_{22}^{m-1}(r)y_2^{m-1} + \cdots + b_1\sigma_{21}(r)y_1+ b_1\sigma_{22}(r)y_2 \\
        + \sum\limits_{l,k = 0}^m p_{\sigma_{22}\sigma_{21}} y_1^ly_2^k\end{bmatrix},
    \end{align}
    
where $p_{\sigma_{11},\sigma_{12}}$ are the possible combinations between $\sigma_{11}$ and $\sigma_{12}$ and the same sense for $p_{\sigma_{22},\sigma_{21}}$. On the other hand,
\begin{align} \label{condition2trimmed}
        \sigma'(r)\begin{bmatrix} q_1\\ q_2 \end{bmatrix} &= \begin{bmatrix} \sigma'_{11}(r) & \sigma'_{12}(r)\\ \sigma'_{21}(r) & \sigma'_{22}(r) \end{bmatrix}\begin{bmatrix} \sum\limits_{i= 1}^n a_iy_1^i \\ \sum\limits_{j=1}^m b_iy_2^m  \end{bmatrix} \notag\\
        &= \begin{bmatrix}  \sigma'_{11}(r) \sum\limits_{i= 1}^n a_iy_1^i + \sigma'_{12}(r)\sum\limits_{j=1}^m b_iy_2^j \\ 
        \sigma'_{21}(r) \sum\limits_{i= 1}^n a_iy_1^i + \sigma'_{22}(r)\sum\limits_{j=1}^m b_iy_2^j\end{bmatrix}\notag\\
        &= \begin{bmatrix}
            \sigma'_{11}(r)a_ny_1^n + \sigma'_{11}(r)a_{n-1}y^{n-1} + \cdots + \sigma'_{11}(r)a_1y_1 + \sigma'_{12}(r)b_my_2^m \\ + \sigma'_{12}(r)b_{m-1}y_2^{m-1} + \cdots + \sigma'_{12}(r)b_1y_2\\
            \sigma'_{21}(r)a_ny_1^n + \sigma'_{21}(r)a_{n-1}y^{n-1} + \cdots + \sigma'_{21}(r)a_1y_1 + \sigma'_{22}(r)b_my_2^m \\ + \sigma'_{22}(r)b_{m-1}y_2^{m-1} + \cdots + \sigma'_{22}(r)b_1y_2
        \end{bmatrix}\notag\\
        &= \begin{bmatrix}
            \sigma'_{11}(r)a_ny_1^n + \sigma'_{12}(r)b_my_2^m +\sigma'_{11}(r)a_{n-1}y^{n-1}\\ + \sigma'_{12}(r)b_{m-1}y_2^{m-1} +  \cdots + \sigma'_{11}(r)a_1y_1  + \sigma'_{12}(r)b_1y_2\\
            \sigma'_{21}(r)a_ny_1^n + \sigma'_{22}(r)b_my_2^m + \sigma'_{21}(r)a_{n-1}y^{n-1} \\+  \sigma'_{22}(r)b_{m-1}y_2^{m-1}  + \cdots +  \sigma'_{21}(r)a_1y_1 +  \sigma'_{22}(r)b_1y_2
        \end{bmatrix}.
    \end{align}
    
If we compare (\ref{condition1trimmed}) and (\ref{condition2trimmed}), then necessarily 
\[
\sum\limits_{l,k = 0}^n p_{\sigma_{11},\sigma_{12}}y_1^ly_2^k = \sum\limits_{l,k = 0}^m p_{\sigma_{22},\sigma_{21}}y_1^ly_2^k = 0,
\]

i.e, the possible combinations between $\sigma_{ll}$ and $\sigma_{lk}$, $1\leq l,k \leq 2$, $l\neq k$ are zero. In addition, $n= m$, $a_i = b_i$ for $1\leq i \leq n$ and $a_i\sigma_{pq}^i(-) = \sigma'_{pq}(-)a_i$ for $1\leq p,q\leq 2$, as desired. 

It is straightforward to prove the other implication.
\end{proof}

\begin{example}
\begin{itemize}
    \item [(1)] Consider the double Ore extension \cite[Subcase 4.3.1]{ZhangZhang2009}. The algebra $\mathbb{H}$ is generated by $x_1, x_2, y_1, y_2$ subjected to the relations    
    \begin{align*}
        x_2x_1 & = x_1 x_2 + x_1^2, & y_2y_1 & = - y_1y_2, \\
        y_1x_1 & = x_1y_2,  & y_1x_2 & = fx_1y_2+x_2y_2, \\
        y_2x_1 & = x_1y_1, &  y_2x_2 & = fx_1y_1+x_2y_1,
    \end{align*}

where $f \not = 0$. This is the trimmed double extension $\mathbb{H}:= R_P[y_1, y_2; \sigma]$ with $R = \Bbbk\{ x_1, x_2 \} / \langle x_2x_1 - x_1x_2 - x_1^2 \rangle$. Let $\varphi: \mathbb{H}^{\oplus 2} \to \mathbb{H}^{\oplus 2}$ defined by 
    \[    
    \varphi \begin{bmatrix} y_1\\ y_2 \end{bmatrix} = \begin{bmatrix} q_1\\ q_2 \end{bmatrix} = \begin{bmatrix} \lambda y_1\\ \lambda y_2 \end{bmatrix},\quad \lambda \in \Bbbk.
    \] 

Since
    \begin{align*}
        \phi(y_2y_1) = q_2q_1= \lambda^2 y_2y_1 = -\lambda^2 y_1y_2 = p_{12}q_1q_2 + p_{11}y_1 =  \phi(p_{12}y_1y_2 + p_{11}y_1^{2}),
    \end{align*}
    
by Theorem \ref{dcvtrimmed}, $\begin{bmatrix} q_1\\ q_2 \end{bmatrix}$ is a dcv-matrix. Note also that
    \begin{align*}
        \begin{bmatrix} \lambda y_1x_1\\ \lambda y_2x_1 \end{bmatrix} &= \begin{bmatrix}\lambda x_1y_2 \\ \lambda x_1y_1\end{bmatrix} = \begin{bmatrix}
        0 & x_1 \\ x_1 & 0 
        \end{bmatrix} \begin{bmatrix} \lambda y_1\\ \lambda y_2 \end{bmatrix} = \begin{bmatrix}
        \sigma_{11}(x_1) &  \sigma_{12}(x_1) \\ \sigma_{21}(x_1) & \sigma_{22}(x_1) 
        \end{bmatrix} \begin{bmatrix} \lambda y_1\\ \lambda y_2 \end{bmatrix},
    \end{align*}    

and a simple calculation shows that the properties of the dcv-matrix are satisfied for every $a \in R_P$. By Theorem \ref{isoOreextension}, we get an isomorphism.
    
    \item [(2)] Zhu et al. \cite[Remark 3.13.(1)]{ZhuVanOystaeyenZhang2017} found the Nakayama automorphism of algebra $\mathbb{H}$. Our results agree with yours by taking $\lambda := -h^2$. 
    
    \item [(3)] Zhu et al. \cite[Remark 3.13.(2)]{ZhuVanOystaeyenZhang2017} considered the trimmed double extension presented in \cite[Subcase 4.3.3]{ZhangZhang2009}, that is, $\mathbb{N} := R_P[y_1,y_2; \sigma]$ with $R_P = \Bbbk\{x_1,x_2\}/\langle x_2x_2 + x_1x_2\rangle$, $P= (-1, 0)$, and $\sigma$ given by the matrix 
    \[
    \begin{bmatrix} 0 & 0 &-g & f\\ 
    0 & 0 & f & -g\\
    g & f & 0 & 0\\
    f & g & 0 & 0    
    \end{bmatrix},
    \] 
    with $f, g \in \Bbbk$ and $f^2 \neq g^2$. They found the Nakayama automorphism of $\mathbb{N}$ which is defined as $y_1 = (g^2 - f^2)y_1$ and $y_2 = (g^2 - f^2)y_2$. In this way, if we consider
    \[    
    \varphi \begin{bmatrix} y_1\\ y_2 \end{bmatrix} = \begin{bmatrix} q_1\\ q_2 \end{bmatrix} = \begin{bmatrix} (g^2 - f^2) y_1\\ (g^2 - f^2) y_2 \end{bmatrix},
    \]    
    
    then we get
    \begin{align*}
        \phi(y_2y_1) &= ((g^2 - f^2)y_2)((g^2 - f^2)y_1)= (g^2 - f^2)^2 y_2y_1\\
        &= -(g^2 - f^2)^2 y_1y_2 = p_{12}q_1q_2 + p_{11}y_1 =  \phi(p_{12}y_1y_2 + p_{11}y_1^{2}),
    \end{align*}    
    
    and by Theorem \ref{dcvtrimmed} it follows that it is a dcv-matrix, and of course, it is an isomorphism by Theorem \ref{isoOreextension}.
\end{itemize}    
\end{example}

\subsection{Two-step iterated Ore extensions vs. Double Ore extensions}\label{Two-stepvsDouble}

We investigate when a homomorphism of double extensions is a homomorphism of two-step iterated extensions in the sense of cv-polynomials presented by Ram\'irez and Reyes \cite{RamirezReyes2024a}. Before, consider the following example. 

\begin{example}\label{exampletheorem}
From \cite[Subcase 4.2.3 (ii) (iii)] {ZhangZhang2009}, we know that the algebra $\mathbb{D}$ is generated by the indeterminates $x_1, x_2, y_1, y_2$ subjected to the relations 
\begin{align*}
        x_2x_1 & = -x_1 x_2, & y_2y_1 & = py_1y_2, \\
        y_1x_1 & = -px_1y_1,  & y_1x_2 & = -p^2x_2y_1+x_1y_2, \\
        y_2x_1 & = px_1y_2, & y_2x_2 & = x_1y_1+x_2y_2,
\end{align*}

where $p\in \{-1,1\}$. $\mathbb{D}$ can be expressed $R_P[y_1,y_2; \sigma, \delta, \tau]$, with $R = \Bbbk[x_1][x_2; \alpha]$ and $\alpha(x_1) = -x_1$. On the other hand, the algebra  $\mathbb{E}$ is generated by the indeterminates $x_1, x_2, y_1, y_2$ satisfying the relations
    \begin{align*}
        x_2x_1 & = -x_1 x_2, & y_2y_1 & = py_1y_2, \\
        y_1x_1 & = x_1y_2+x_2y_2,  & y_1x_2 & = x_1y_2-x_2y_2, \\
        y_2x_1 & = -x_1y_1+x_2y_1, & y_2x_2 & = x_1y_1+x_2y_1,
    \end{align*}

with $p^2=-1$. It can be seen that $\mathbb{E}$ is the double Ore extension $R_P[y_1,y_2; \sigma, \delta, \tau]$.

Consider the homomorphism 
\begin{align*}
    \varphi: \mathbb{D}^{\oplus 2} &\ \to \mathbb{E}^{\oplus 2}\\ 
    \begin{bmatrix} y'_1\\ y'_2 \end{bmatrix} &\ \mapsto \begin{bmatrix} \phi(y'_1)\\ \phi(y'_2) \end{bmatrix} = \begin{bmatrix} q_1\\ q_2 \end{bmatrix} = \begin{bmatrix} x_1\\ x_2 \end{bmatrix},\quad x_1, x_2 \in R.
\end{align*}

If we suppose $p'_{12} = p = -1$, since in the algebra $\mathbb{D}$ we have $p'_{11}= \tau'_1 = \tau'_2 = 0 = \tau'_0 = 0$, then let us show that $q_1$ and $q_2$ satisfy (\ref{conditionhomo1}). Then $\phi(y_2'y_1') = q_2q_1 = x_2x_1 = -x_1x_2$, and 
\begin{align*}
&\phi(p'_{12}y_1'y_2' + p'_{11}y_1^{'2} + \tau'_1y_1'+ \tau'_2y'_2 + \tau'_0) = p'_{12}q_1q_2 + p'_{11}q_1^2 + \tau'_1q_1+ \tau'_2q_2 + \tau'_0\\
& = p'_{12}x_1x_2 + p'_{11}x_1^2 + \tau'_1x_1+ \tau'_2x_2 + \tau'_0 = -x_1x_2,
\end{align*}

whence $ \phi(y'_2y'_1) = \phi(p'_{12}y'_1y'_2 + p'_{11}y_1^{'2} + \tau'_1y'_1+ \tau'_2y'_2 + \tau'_0)$ as desired. 

With respect to the condition (\ref{dcv-polirelation}), let
\begin{center}
    $\sigma'(r)=\begin{bmatrix} \alpha(r) & 0 \\ 0 & \alpha(r) \end{bmatrix}
    \ \text{and}\ \delta'(r)= \begin{bmatrix} 0\\ 0 \end{bmatrix}$,
\end{center}
for every $r \in \Bbbk[x_1]$. Note that, for $r \in \Bbbk$
\[
    \begin{bmatrix} q_1\\ q_2 \end{bmatrix} r = \begin{bmatrix} q_1r\\ q_2r \end{bmatrix} = \begin{bmatrix} x_1r\\ x_2r \end{bmatrix},
\]

and
\begin{align*}
    \sigma'(r)\begin{bmatrix} q_1\\ q_2 \end{bmatrix} + \delta'(r) &= \sigma'(r)\begin{bmatrix} x_1\\ x_2  \end{bmatrix} + \delta'(r) = \begin{bmatrix} r & 0 \\ 0 & r  \end{bmatrix} \begin{bmatrix} x_1\\ x_2  \end{bmatrix} = \begin{bmatrix} rx_1\\ rx_2  \end{bmatrix} = \begin{bmatrix} x_1r\\ x_2r  \end{bmatrix},
\end{align*}

which shows that $\begin{bmatrix} q_1\\ q_2 \end{bmatrix} r = \sigma'(r)\begin{bmatrix} q_1\\ q_2 \end{bmatrix} + \delta'(r)$, and so it follows that $\begin{bmatrix} q_1\\ q_2 \end{bmatrix} = \begin{bmatrix} x_1\\ x_2  \end{bmatrix}$ is a dcv-matrix respect to $(\sigma', \delta')$.
\end{example}

From Propositions \ref{Theorem 2.2} and \ref{Carvalhoetal2011Theorem2.4} we know when a double Ore extension can be expressed as a two-step iterated Ore extension. In an analogous way, Theorem \ref{maintheorem} establishes when a homomorphism of double Ore extensions in the sense of dcv-matrix can be expressed as a homomorphism of two-step iterated Ore extensions in the sense of the cv-polynomials formulated by Ram\'irez and Reyes \cite[Definition 3.3]{RamirezReyes2024a}.

\begin{theorem}\label{maintheorem}
Let $B' = R_{P'}[y'_1, y'_2; \sigma', \delta', \tau']$ and $B = R_P[y_1, y_2; \sigma, \delta, \tau]$ be right double extensions of the $\Bbbk$-algebra $R$. Consider the homomorphism $\varphi$ between $B^{'\oplus 2}$ and $B^{\oplus 2}$ as in Definition \ref{definitionhomo}. If the conditions 
\begin{gather*}
    p'_{12}\phi(y'_1) = \sigma'_2(\phi(y'_1)), \quad p'_{11} = 0, \quad \tau'_1\phi(y'_1) = \delta'_2(\phi(y'_1)), \quad \tau'_2 = \tau'_0 = 0,\\
    \sigma'_{11}(r) = \sigma'_1(r), \quad \sigma'_{22}(r) = \sigma'_2(r),\quad \sigma'_{12} (-) = \sigma'_{21} (-) = 0,
\end{gather*}

are satisfied, then $\varphi$ can be presented as a homomorphism between iterated Ore extension.
\end{theorem}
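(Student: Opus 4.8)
The plan is to take the double extension homomorphism $\varphi$ determined by the dcv-matrix $\begin{bmatrix} q_1 \\ q_2 \end{bmatrix} = \begin{bmatrix} \phi(y'_1) \\ \phi(y'_2) \end{bmatrix}$ and show that, under the stated hypotheses, it factors through the presentations of $B'$ and $B$ as two-step iterated Ore extensions. First I would invoke Proposition \ref{Theorem 2.2}: the conditions $\sigma'_{12} = \sigma'_{21} = 0$ (together with $\sigma'_{11} = \sigma'_1$, $\sigma'_{22} = \sigma'_2$) place us in case (1)(ii), so $B' = R[y'_1; \sigma'_1, d'_1][y'_2; \sigma'_2, d'_2]$ is a genuine iterated Ore extension, with the maps related by the displayed formulas in \ref{Theorem 2.2}(1)(iii); in particular $\delta'_1 = d'_1$ on $R$ and $\delta'_2(a) = d'_2(a)$ since $\sigma'_{21} = 0$. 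For the target $B$, the homomorphism $\phi$ lands in $B$ written however it is most convenient; what matters is that $q_1 = \phi(y'_1)$ and $q_2 = \phi(y'_2)$ are elements of $B$ satisfying the dcv-relations with respect to $(\sigma', \delta')$.

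Next I would check that $q_1$ is a cv-polynomial for the quasi-derivation $(\sigma'_1, d'_1)$ on $R$ in the sense of \cite[Definition 3.3]{RamirezReyes2024a}: this is exactly the first row of (\ref{dcv-polirelation}), namely $q_1 r = \sigma'_{11}(r) q_1 + \sigma'_{12}(r) q_2 + \delta'_1(r) = \sigma'_1(r) q_1 + d'_1(r)$, where the middle term vanishes because $\sigma'_{12} = 0$. Then I would verify that $q_2$ behaves as the cv-polynomial for the second step: from the second row of (\ref{dcv-polirelation}) and $\sigma'_{21} = 0$ we get $q_2 r = \sigma'_2(r) q_2 + \delta'_2(r)$ for $r \in R$, which is the $R$-part of the required relation; the conditions $p'_{11} = 0$, $\tau'_2 = \tau'_0 = 0$, $p'_{12}\phi(y'_1) = \sigma'_2(\phi(y'_1))$ and $\tau'_1\phi(y'_1) = \delta'_2(\phi(y'_1))$ are precisely what is needed so that $q_2$ also satisfies the relation $q_2 q_1 = \sigma'_2(q_1) q_2 + d'_2(q_1)$ obtained by rewriting (\ref{conditionhomo1}): indeed (\ref{conditionhomo1}) becomes $q_2 q_1 = p'_{12} q_1 q_2 + \tau'_1 q_1$, and substituting $p'_{12} q_1 = \sigma'_2(q_1)$, $\tau'_1 q_1 = \delta'_2(q_1) = d'_2(q_1)$ gives the Ore-extension cv-condition for the variable $y'_2$ over $R[y'_1;\sigma'_1,d'_1]$. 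Here one must be careful that $d'_2$ extends $\delta'_2$ correctly and that $\sigma'_2$ has been extended to $R[y'_1;\sigma'_1,d'_1]$ via $\sigma'_2(y'_1) = p'_{12} y'_1 + \tau'_2 = p'_{12} y'_1$ (using $\tau'_2 = 0$), matching the formula in \ref{Theorem 2.2}(1)(iii).

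Having matched the cv-conditions variable-by-variable, I would then assemble the two-step homomorphism: define $\psi: R[y'_1;\sigma'_1,d'_1][y'_2;\sigma'_2,d'_2] \to B$ to be the identity on $R$, $y'_1 \mapsto q_1$, $y'_2 \mapsto q_2$, and extend multiplicatively by the normal form $\sum a_{ij} y_1^{'i} y_2^{'j} \mapsto \sum a_{ij} q_1^i q_2^j$. The well-definedness of $\psi$ as a ring homomorphism is exactly the assertion that the two cv-conditions verified above are the only defining relations one needs to preserve, which is the content of \cite[Definition 3.3 / the corresponding existence statement]{RamirezReyes2024a}; and by construction $\psi$ agrees with $\phi \circ (\text{the isomorphism } B' \cong R[y'_1;\sigma'_1,d'_1][y'_2;\sigma'_2,d'_2])$ on generators, hence everywhere. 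The upshot is that $\varphi$, which was $\phi$ packaged into a map of $\oplus 2$-modules, is identified with a homomorphism of two-step iterated Ore extensions in the sense of Ram\'irez and Reyes.

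The main obstacle I anticipate is bookkeeping the extension of $\sigma'_2$ and $d'_2$ from $R$ to the intermediate ring $R[y'_1;\sigma'_1,d'_1]$ and confirming that the dcv-relation (\ref{conditionhomo1}) really does encode precisely the cv-condition $q_2 r = \sigma'_2(r) q_2 + d'_2(r)$ for \emph{all} $r \in R[y'_1;\sigma'_1,d'_1]$ and not merely for $r \in R$; one has to check the case $r = y'_1$ separately, and that is exactly where the four scalar/tail conditions $p'_{12}\phi(y'_1) = \sigma'_2(\phi(y'_1))$, $p'_{11} = 0$, $\tau'_1\phi(y'_1) = \delta'_2(\phi(y'_1))$, $\tau'_2 = \tau'_0 = 0$ enter. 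A secondary subtlety is that the target $B$ is itself a double extension, so strictly speaking one should note that the image of $\psi$ lands in a subalgebra of $B$ on which the relevant iterated-Ore structure is visible, or simply regard $B$ abstractly as the codomain of a ring homomorphism without needing it to be iterated; I would take the latter, lighter route and remark that if $B$ additionally satisfies the hypotheses of \ref{Theorem 2.2} or \ref{Carvalhoetal2011Theorem2.4} then $\psi$ is literally a morphism of iterated Ore extensions on both sides.
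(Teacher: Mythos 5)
Your proposal is correct and follows essentially the same route as the paper's proof: you diagonalize the relation (\ref{dcv-polirelation}) using $\sigma'_{12}=\sigma'_{21}=0$ to obtain the two cv-conditions $q_ir=\sigma'_i(r)q_i+\delta'_i(r)$, and you use the scalar/tail hypotheses to rewrite (\ref{conditionhomo1}) as $q_2q_1=\sigma'_2(q_1)q_2+\delta'_2(q_1)$, exactly as the paper does. Your additional remarks on invoking Proposition \ref{Theorem 2.2} to realize $B'$ as an iterated Ore extension and on assembling the resulting homomorphism make explicit what the paper leaves implicit, but they do not change the argument.
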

\begin{proof}
Consider the relation (\ref{conditionhomo1}). If $p'_{12}\phi(y'_1) = \sigma'_2(\phi(y'_1)),\ p'_{11} = 0,\ \tau'_1\phi(y'_1) = \delta'_2(\phi(y'_1))$, and $\tau'_2 = \tau'_0 = 0$, then
\[
\phi(y_2'y_1') = q_2q_1 = \sigma'_2(\phi(y'_1))q_2 +  \delta'_2(\phi(y'_1)) = \sigma'_2(q_1)q_2 +  \delta'_2(q_1),
\]

which is precisely the condition in the definition of cv-polynomials. On the other hand, condition (\ref{dcv-polirelation}) implies
\begin{align*}
     \begin{bmatrix} q_1\\ q_2 \end{bmatrix} r = &\ \begin{bmatrix} \sigma'_{11}(r) & \sigma'_{12}(r)\\ \sigma'_{21}(r) & \sigma'_{22}(r) \end{bmatrix}\begin{bmatrix} q_1\\ q_2 \end{bmatrix} + \begin{bmatrix} \delta'_1(r)\\ \delta'_2(r) \end{bmatrix},\\
      \begin{bmatrix} q_1\\ q_2 \end{bmatrix} r = &\ \begin{bmatrix} \sigma'_{1}(r) & 0\\ 0 & \sigma'_{2}(r) \end{bmatrix}\begin{bmatrix} q_1\\ q_2 \end{bmatrix} + \begin{bmatrix} \delta'_1(r)\\ \delta'_2(r) \end{bmatrix},
\end{align*}

or equivalently,
\begin{align*}
    q_1 r = \sigma'_1(r)q_1 + \delta'_1(r),\quad {\rm and}\quad q_2 r = \sigma'_2(r)q_2 + \delta'_2(r),
\end{align*}

which are precisely the other two conditions in \cite[Definition 3.3]{RamirezReyes2024a}.
\end{proof}

\begin{example}
Consider Example \ref{exampletheorem}. According to the homomorphism $\phi: \mathbb{D} \to \mathbb{E}$ that maps the generators of the double Ore extension to $x_1, x_2 \in R$ with $R = \Bbbk[x_1][x_2; \alpha]$, $p'_{12} = -1, p'_{11} = 0, \tau'_1 = \tau'_2 = 0 = \tau'_0 = 0$, and $\sigma_{11} = \sigma_{22} = \alpha, \sigma_{12} = \sigma_{21} = 0, \delta_{1} = \delta_{2}= 0$, the conditions of Theorem \ref{maintheorem} are satisfied.
\end{example}

\begin{remark}
Theorem \ref{maintheorem} can be reformulated by using Proposition \ref{Theorem 2.2}(iii). More exactly, in this case $\sigma'_{21}$ need not be necessarily zero and $\delta'_2(r) = \delta'_2(r) - \sigma'_{21}(r)q_1$ for every $r \in R$, whence  
\begin{align*}
     \begin{bmatrix} q_1\\ q_2 \end{bmatrix} r = &\ \begin{bmatrix} \sigma'_{11}(r) & \sigma'_{12}(r)\\ \sigma'_{21}(r) & \sigma'_{22}(r) \end{bmatrix}\begin{bmatrix} q_1\\ q_2 \end{bmatrix} + \begin{bmatrix} \delta'_1(r)\\ \delta'_2(r) \end{bmatrix}\\
      = &\ \begin{bmatrix} \sigma'_{1}(r) & 0\\ \sigma'_{21}(r) & \sigma'_{2}(r) \end{bmatrix}\begin{bmatrix} q_1\\ q_2 \end{bmatrix} + \begin{bmatrix} \delta'_1(r)\\ \delta'_2(r) - \sigma'_{21}(r)q_1\end{bmatrix},
\end{align*}

which are conditions in \cite[Definition 3.3]{RamirezReyes2024a}.
\end{remark}

\begin{example}
The examples shown in Table \ref{firsttableDO} correspond to homomorphisms of iterated extensions. This is due that $q_2$ is a constant and $\sigma'_{12}$ and $\sigma'_{21}$ are zero. \end{example}

\section{Future work}\label{futurework}

As we said above, it is a pending task to exemplify dcv-matrices consisting of polynomials of degree two.

On the other hand, {\em skew PBW extensions} were introduced by Gallego and Lezama \cite{GallegoLezama2011} with the aim of generalizing Ore extensions of injective type and PBW extensions defined by Bell and Goodearl \cite{BellGoodearl1988}. Over the years, ring-theoretic, geometric and homological properties of the objects have been studied, and it has shown that these extensions generalize several families of noncommutative algebras (see \cite[Chapter 2]{LFGRSV} and \cite[Section 2]{NinoRamirezReyes2020} for more details). Su\'arez \cite{Suarez2017} defined a subclass of these extensions, the {\em graded skew PBW extensions}, and then, by using the comparison carried out by Carvalho et al. \cite{Carvalhoetal2011}, in \cite{GomezSuarez2020} he presented necessary and sufficient conditions for a graded trimmed double Ore extension to be a graded skew PBW extension, and in fact, they proved the Artin-Schelter regularity and the property of being skew Calabi-Yau for graded skew PBW extensions. Having in mind these works and the theory established in this paper, a question arises whether it is possible to develop the theory of homomorphisms and cv-polynomials for graded skew PBW extensions, and if so, compare it with the results obtained here and those corresponding in \cite{Carvalhoetal2011, GomezSuarez2020, LamLeroy1992, RamirezReyes2024a, SuarezAnayaReyes2021, SuarezCaceresReyes2021, SuarezLezamaReyes2017}. 

Last but not least, L\"u et al. \cite{LuWangZhuang2015} proved that the universal enveloping algebra of a Poisson-Ore extension is a length two iterated Ore extension of the original universal enveloping algebra, and then showed that the Poisson enveloping algebra of a double Poisson-Ore extension is an iterated double Ore extension \cite{LuOhWangYu2018}. Related with this, Lou et al. \cite{LouOhWang2020} gave a definition of Poisson double extension - which may be considered as an analogue of double Ore extension -, and showed that algebras in a class of double Ore extensions are deformation quantizations of Poisson double extensions. These works together Zambrano's paper \cite{Zambrano2020} where he gave a description of Poisson brackets on some families of skew PBW extensions, motivate us to ask ourselves about the relations between all these algebras through the notion of homomorphism and cv-polynomial developed here.

% Non-BibTeX users please use

\end{document}